\newcommand{\bitem}{\begin{itemize}[leftmargin=30pt]}
\newcommand{\benum}{\begin{enumerate}[leftmargin=30pt, 
label=\rm{(\alph*)}]}
\newcommand{\benumm}{\begin{enumerate}[leftmargin=*, 
label=\rm{(\alph*)}]}
\newcommand{\R}{\mathbb{R}}
\newcommand{\C}{\mathbb{C}} 
\newcommand{\Z}{\mathbb{Z}}
\newcommand{\Inn}{\mathrm{Inn}}
\newcommand{\Out}{\mathrm{Out}}
\newcommand{\SL}{\mathrm{SL}}
\newcommand{\SO}{\mathrm{SO}}
\newcommand{\SU}{\mathrm{SU}}
\newcommand{\GL}{\mathrm{GL}}
\newcommand{\PGL}{\mathrm{PGL}}
\newcommand{\PSL}{\mathrm{ PSL}}
\newcommand{\U}{\mathrm{U}}
\newcommand{\PU}{\mathrm{PU}}
\newcommand{\diag}{\mathrm{diag}}
\newcommand{\Aut}{{\rm Aut}}
\newcommand{\ad}{{\rm ad}}
\newcommand{\E}{{\rm E}_2}
\newcommand{\CE}{\rm{CE}}
\renewcommand{\sl}{\mathfrak{sl}} 
\newcommand{\h}{{\mathfrak h}} 
\newcommand{\su}{\mathfrak{su}}
\newcommand{\aut}{\mathfrak{aut}}
\newcommand{\g}{\mathfrak{g}}
\newcommand{\sut}{\su_2}
\newcommand{\suto}{\su_{2,1}}
\newcommand{\SUt}{\SU_2}
\newcommand{\SOt}{\SO_3}
\newcommand{\PUto}{\PU_{2,1}}
\newcommand{\Pgc}{\P(\g_\C)}
\newcommand{\Pgcr}{\P(\g_\C)_{\mathrm{reg}}}
\newcommand{\Psltc}{\P(\sltc)}
\newcommand{\Psltcr}{\P(\sltc)_{\mathrm{reg}}}
\newcommand{\GLt}{\GL_2(\R)}
\newcommand{\SLt}{{\SL_2(\R)}} 
\newcommand{\PGLt}{\PGL_2(\R)}
\newcommand{\SUto}{\SU_{2,1}}
\newcommand{\sltc}{\sl_2(\C)}
\newcommand{\slt}{\sl_2(\R)}
\newcommand{\SLtc}{\SL_2(\C)}
\newcommand{\PSLt}{\PSL_2(\R)} 
\newcommand{\CP}{\C\P}
\newcommand{\RP}{\R\P}
\newcommand{\RPt}{\RP^2}
\newcommand{\VS}{V}
\newcommand{\DS}{D}
\newcommand{\Ee}{\mathfrak{e}}
\newcommand{\e}{{\mathbf e}}
\newcommand{\CR}{\mathrm{CR}}
\newcommand{\iso}{\stackrel{\sim}{\to}}
\newcommand{\Span}{\mathrm{Span}}
\newcommand{\II}{\mathrm I}
\renewcommand{\>}{\rangle}
\newcommand{\<}{\langle} 
\newcommand{\w}{ {\mathbf w} }
\newcommand{\X}{\mathfrak{R}}
\newcommand{\tr}{\mathrm{tr}}
\renewcommand{\P}{\mathrm{P}}
\renewcommand{\Im}{{\rm Im}}
\renewcommand{\Re}{{\rm Re}}
 \newcommand{\CC}{\mathcal{C}}
\newcommand{\st}{\, | \,}
\newcommand{\End}{{\rm End}} 
\newcommand{\Ker}{{\rm Ker}}
\newcommand{\Ad}{{\rm Ad}} 
\renewcommand{\mod}{{\rm mod\; }}
\newcommand{\n}{\noindent}
\newcommand{\sn}{\smallskip\n} 
\newcommand{\mn}{\medskip\noindent}
\newcommand{\be}{\begin{equation}}
\newcommand{\ee}{\end{equation}}
\renewcommand{\d}{\mathrm{d}}
\newcommand{\vb}{{\bf v}}
 \newcommand{\OO}{\mathcal O}
\newcommand{\wa}{well-adapted }
\newcommand{\sign}{\mathrm{sign}}
\newtheorem{thm}{Theorem}[section]
\newtheorem{lemma}{Lemma}[section]
\newtheorem{prop}{Proposition}[section]
\newtheorem{cor}{Corollary}[section]
\newtheorem{defn}{Definition}[section]
\theoremstyle{remark}
\newtheorem{rmrk}{Remark}[section]
\newcommand{\hl}[1]{#1}
\title {Left-invariant CR structures on 3-dimensional Lie groups}
\author{Gil Bor}
\address{
Centro de Intevetigaci\'on en Matem\'aticas (CIMAT), 
Guanajuato, Mexico}
\email{gil@cimat.mx} 
\author{Howard Jacobowitz}
\address{
Department of Mathematical Sciences, Rutgers University, Camden,
New Jersey, USA}
\email{jacobowi@rutgers.edu}
\date{\today}
\begin{document}
\maketitle

\begin{abstract}
The systematic study of CR manifolds originated in two pioneering 1932 papers of \'Elie Cartan. In the first, Cartan classifies all homogeneous CR 3-manifolds, the most well-known case of which is a  one-parameter family of left-invariant CR structures on $\SUt = S^3$, deforming the standard `spherical' structure. In this   paper, mostly expository,  we illustrate and clarify  Cartan's results and methods by providing
detailed classification results in modern language for four 3-dimensional Lie groups. \hl{In particular, we find that   $\SLt$}
admits two one-parameter families of left-invariant CR structures, called the  elliptic and hyperbolic families, characterized by the 
incidence of the   contact distribution with the null cone of the Killing metric. Low dimensional complex representations of $\SLt$ provide CR embedding or immersions of these structures. The same methods apply to all other three-dimensional Lie groups and are
illustrated by descriptions of the left-invariant CR structures for $\SUt$, the Heisenberg group,  and the Euclidean group.
 \end{abstract}

\tableofcontents
\section{Introduction}
A real hypersurface $M^3$ in a 2-dimensional complex manifold (such as $\C^2$) inherits an intrinsic geometric structure from the complex structure of its ambient space. This is called a CR structure and can be thought of as an odd-dimensional version of a complex structure.  A more precise definition is given \hl{in \S2} below.  The study of these  structures is based on three foundational papers.  \hl{The first is a 1907 paper of H.~Poincar\'e \cite{Po}, }which  shows that the Riemann Mapping Theorem for domains in $\C^1$ does not hold in higher dimensions.  In fact, it fails even locally, even in the real analytic case, and for the simplest of reasons:  There are more germs of real hypersurfaces than germs of holomorphic mappings.  More explicitly, Poincar\'e's observation was that for $n\geq 2$ and $N$ large enough, the space of $N$-jets of biholomorphic mappings on open sets of $\C^n$ is of lower dimension than the space of $N$-jets of real-valued functions of $2n-1$ real variables.  From this it follows that a generic perturbation of a smoothly bounded open set $A\subset {\C^n}$ is not biholomorphically equivalent to $A$ and so the Riemann Mapping Theorem fails.  It also follows that, unlike complex structures, CR structures possess {\em local} invariants, similar to the well-known curvature invariants of Riemannian metrics. Consequently, a generic CR manifold   admits {\em no} CR symmetries, even locally. 

The second foundational paper, published in two parts, is \'Elie Cartan's work of 1932 \cite {Ca1},\cite{Ca2}.  Since there are these local, indeed pointwise, invariants it is natural to  find them explicitly.  This fit in nicely with research Cartan was already doing.  The Erlangen Program of F. Klein emphasized that geometry was the study of the invariance properties of groups of transformations.  Cartan had taken this one step further with his theory of moving frames focusing on the infinitesimal action of the transformation groups.  This not only incorporated Riemannian manifolds and its generalizations into the Erlangen scheme but also provided Cartan with the new tools to study projective geometries, both real and complex, the conformal and projective deformations of surfaces, etc.  A contemporaneous explanation of Cartan's moving frames approach may be found in Weyl's review of one of Cartan's books \cite {Weyl}.  A more accessible explanation, using modern notation, is the influential article \cite{Gr} and, more recently, the graduate textbook \cite{Cl}.

Two highly significant papers that have continued and extended this study of the geometric properties of CR structures on hypersurfaces in $\C^n$ are \cite {CM} and \cite{We}

The third foundational paper took the study of CR structures in a new and surprising direction.  This was Hans Lewy's discovery, in 1957, of a locally non-solvable linear partial differential equation \cite{Le}.  To emphasize how surprising this discovery was, we quote Treves \cite{Tr}, one of the originators of the modern theory of \hl{PDEs}:

\begin{quotation}
Allow me to insert a personal anecdote: in 1955 I was given the following thesis problem: prove that every linear partial differential equation with smooth coefficients, not vanishing identically at some point, is locally solvable at that point. My thesis director was, and still is, a leading analyst; his suggestion simply shows that, at that time, nobody had any inkling of the structure underlying the local solvability problem, as it is now gradually revealed.
\end{quotation}

Lewy's example is that 
for a generic \hl{smooth} $f(x,y,u)$ the equation 
\[
\left(\frac {\partial}{\partial x} + i\frac {\partial}{\partial y}-i(x+iy)
\frac {\partial }{\partial u}\right)w =f
\]
has no solution in any neighborhood of any point in $\R^3$.  

The connection of Lewy's paper to CR structures is this:  The operator on the left is  induced by the Cauchy-Riemann equations on $\C^2$ and defines the CR structure on the hyperquadric
\[
\Im (w)= |{z}|^2.
\]

This connection between CR structures and the theory of \hl{PDEs} has led to a vast amount of research such as \cite{Ho} and the three subsequent volumes for  general solvability theory and \cite{TrBook} for the study of the induced CR complex and its generalizations.  

 Here we come to the origin of the name of this field.  Cartan called these structures pseudoconformal, emphasizing that they should be thought of as a generalization of the conformal (\hl{i.e.} complex) structure of $\R^2$.  With the realization that the partial differential operators of $M^{2n+1}\subset \C^{n+1}$ induced by the Cauchy-Riemannn equations were of fundamental importance, the induced structure became known as a CR structure.  This new name was introduced by Greenfield \cite{Gr}.
 
 It is interesting to note that H. Lewy once commented (to \hl{the 2nd author}) that he was led to his example while trying to understand Cartan's paper \hl{\cite{Ca1}}.
 
 In the present article we study the CR structures most closely related to the Erlangen Program, namely the left-invariant CR structures on three-dimensional Lie groups and, more generally, homogeneous three-dimensional CR structures.  In fact,  before using the moving frames method to study the general case, Cartan used a more algebraic approach to classify  in Chapter II of  \cite{Ca1}  {\em homogeneous} CR 3-manifolds, \hl{i.e.} 3-dimensional CR manifolds admitting a transitive action of a Lie group  by CR automorphisms.  He found that, up to a cover, every  such CR structure is a left-invariant CR structure  on a 3-dimensional Lie group  \cite[p.~69]{Ca1}. The items on this list form a  rich source of natural examples of CR geometries which, in our opinion, has   been hardly  explored and mostly forgotten. In this article  we present some of the most interesting  items on Cartan's list. 
 We outline Cartan's approach and, in particular,  the relation between the adjoint representation of the group and global realizability (the embedding of a CR structure as a hypersurface in a complex 2-dimensional manifold).

The spherical CR structure  on  $S^3$   can be thought of as the unique left-invariant CR structure on the group $\SUt\simeq S^3$ \hl{that} is also invariant by right translations by the standard diagonal circle subgroup $\mathrm{U}_1\subset \SUt$. There is a well-known and much studied 1-parameter family of deformations of this structure on $\SUt$ to structures whose only symmetries are  left translations by $\SUt$ (see, for example, \cite{Bu}, \cite{Cap},  \cite{CaMo}, \cite{Ro}). An  interesting feature of this family of deformations is that none of the structures, except the spherical one, can be globally realized as a hypersurface in $\C^2$  (although they can be  realized as finite covers
 of hypersurfaces  in 
$\CP^2$, the 3-dimensional orbits of the projectivization of the conjugation action of $\SUt$ on $\sltc$). This was  first shown in \cite{Ro} and later in  \cite{Bu} by a different and interesting proof; see Remark \ref{rmrk:burns} for a sketch of the latter proof. 

A left-invariant CR structure on a 3-dimensional Lie group $G$ is given by  a   1-dimensional complex subspace of its complexified Lie algebra $\g_\C$, that is, a point in the 2-dimensional   complex projective plane $\Pgc\simeq \CP^2$, satisfying a certain regularity condition (Definition \ref{def:reg} below). 
 The automorphism group of $G$, $\Aut(G)$, acts on the space of left-invariant CR structures on $G$, so that two $\Aut(G)$-equivalent  left-invariant CR structures on $G$  correspond to two points in $\Pgc$ in the same $\Aut(G)$-orbit. Thus the classification of left-invariant CR structures on $G$, up to CR-equivalence by the action of $\Aut(G)$, reduces to the classification of the  $\Aut(G)$-orbits in  $\Pgc$. This leaves the possibility that two left-invariant CR structures on $G$ which are not CR equivalent under $\Aut(G)$ might be still  CR-equivalent, locally or globally. Using  Cartan's equivalence method,  as introduced in \cite{Ca1}, we show in \hl{Theorem \ref{thm:ens}}
 that for {\em aspherical} left-invariant CR structures this possibility does not occur. Namely: two left-invariant aspherical CR structures on two 3-dimensional Lie groups  are CR equivalent if and only if the they are CR equivalent via a Lie group isomorphism. 
 See also \cite{BE} for a global invariant that distinguishes members of the left-invariant structures on $\SUt$ and Theorem 2.1 of \cite[p.~246]{ENS}, \hl{which  is the basis of our Theorem \ref{thm:ens}}.  The asphericity condition in Theorem \ref{thm:ens} is essential (see Remark \ref{rmrk:counter}).

\mn{\bf Contents of the paper.} In the next section, \S\ref{sec:prelim}, we  
present the basic definitions and  properties of CR manifolds. In \S\ref{sec:homog} we introduce some  tools  for studying  homogenous CR manifolds which will be used in later sections. 

In \S\ref{sec:sl2} we study our main   example of $G=\SLt$, where we find that up to $\Aut(G)$, there are two 1-parameter families of left-invariant CR structures, one {\em elliptic} and one {\em hyperbolic}, depending on the incidence relation of the associated contact distribution with the null cone of the Killing metric, see Proposition \ref{prop:p}. %
Realizations of these structures  are described in Proposition \ref{prop:slt2}: the  elliptic spherical structure can be  realized as any of the generic orbits of the standard representation in $\C^2$, or the complement of  $z_1=0$ in $S^3\subset\C^2$. The rest of the structures are  finite  covers of orbits of the  adjoint action   in  $\P(\sltc)=\CP^2$. The question of their global realizability in $\C^2$ remains open, as far as we know.

In \S\ref{sec:su2} we treat   the  simpler case of $G=\SUt$, where we recover the well-known 1-parameter family of left-invariant CR structures mentioned above, all with the same contact structure, containing a single spherical structure.  

The remaining two  sections present similar results for the Heisenberg and Euclidean groups.

In the Appendix we state the main differential geometric result of \cite{Ca1} and the specialization to homogeneous CR structures. 

\mn \centerline{*\qquad *\qquad*}

 \mn{\bf How `original' is this paper?} 
 We are  certain that \'Elie Cartan knew most  the results we present here. Some experts in his methods could likely  extract the {\em statements} of these results  from   his  paper \cite{Ca1}, where  Cartan  presents a classification of  homogeneous CR 3-manifolds in Chapter II.   As for finding the {\em proofs} of these results in \cite{Ca1}, or anywhere else, we are much less certain. The classification of homogeneous CR 3-manifolds appears on p.~70 of \cite{Ca1}, summing up more than 35 pages of general considerations  followed by case-by-case calculations. We found Cartan's text justifying the classification very hard to follow. The general ideas and techniques are quite clear, but we were unable to justify  many details of his  calculations and follow through the line of reasoning.  Furthermore, Cartan presents the classification in Chap.~II of \cite{Ca1}  before solving  the equivalence problem for CR manifolds in Chap.~III, so  the CR invariants  needed to distinguish the items on his list are not available, nor can he use the argument of our Theorem \ref{thm:ens}.  In spite of extensive search and consultations with several experts, we could not find anywhere in the  literature  a detailed and complete   statement in modern language of Cartan's  classification of homogeneous CR manifolds, let alone  proofs. We decided it would be more useful for us, and our readers,   to  abstain from further deciphering of \cite{Ca1} and to  rederive his classification.
  
  As for  \cite{ENS}, apparently the authors shared our frustration with Cartan's text, as they redo parts of the classification in a style similar to ours. But  we found their presentation sketchy and at times inadequate. For example, the reference on pp.~248 and 250 of  \cite{ENS} to the  `scalar curvature $R$ of the CR structure' is misleading.  There is no `scalar curvature'  in CR geometry.  Cartan's invariant called  $R$ is coframe dependent and so the formula given by the authors is meaningless without specifying the coframe used \hl{(which is not  provided).}  Also, the realizations they found for their  CR structures are rather different from ours.

 In summary, we lay no  claim for originality of the results of this paper. Our main purpose here is to give a new treatment  of an old subject.  We hope the reader will find it worthwhile.

 \mn{\bf Acknowledgments.} We thank Boris Kruglikov and Alexander Isaev for pointing out to   us the article  \cite{ENS}, on which  our Theorem \ref{thm:ens} is based. 
GB thanks Richard Montgomery and Luis Hern\'andez Lamoneda for useful conversations.  GB  acknowledges support from CONACyT under project 2017-2018-45886.

\section{Basic definitions and properties of CR manifolds}\label{sec:prelim} 
A {\em CR structure} on a 3-dimensional manifold $M$ is a rank 2 subbundle $\DS \subset TM$  together with an almost complex structure
$J$  on $\DS$, i.e.  a bundle automorphism $J:D\to D$ such that $J^2=-Id$.  The structure is {\em non-degenerate} 
if $\DS$  is a  contact structure, i.e.  its sections  bracket generate $TM$.  
We shall henceforth assume this non-degeneracy condition for all CR structures. We stress that in this article all  CR manifold are  assumed 3-dimensional and have an underlying contact structure.

A CR structure  is equivalently given by  a complex line subbundle $\VS \subset \DS_\C:=\DS\otimes\C,$ the $-i$ eigenspace of $J_\C:=J\otimes\C$, denoted also by $T^{(0,1)}M$.
Conversely, given a complex line subbundle $\VS \subset T_\C M:=TM\otimes\C$ such that $\VS \cap \overline \VS = \{0\}$ and  $\VS \oplus \overline \VS$ bracket generates $T_\C M$, there is a unique  CR structure $(\DS,J)$  on $M$ such that $V=T^{(0,1)}M$. A  section  of $\VS$ is a {\em complex vector field  of type}  $(0,1)$ and can be equally used to specify the CR structure, provided it is non-vanishing. 

A dual way of specifying a CR structure, particularly useful for calculations, is via an {\em adapted coframe. } This consists of a pair of 1-forms $(\phi,\phi_1)$ where $\phi$ is a real contact form, i.e.  $D=\Ker(\phi)$,  $\phi_1$ is a complex valued form of type $(1,0)$, i.e.  $\phi_1(Jv)=i\phi_1(v)$ for every $v\in D$,  and such that  $\phi\wedge\phi_1\wedge \bar\phi_1$ is non-vanishing. \hl{The line bundle} $V\subset T_\C M$ can then be recovered from $\phi, \phi_1$ as their common kernel. The non-degeneracy of $(D,J)$ is equivalent to the non-vanishing of $\phi\wedge \d \phi$. 
We will use in the sequel any of these equivalent definitions of a CR structure.

If $M$ is a real hypersurface in a complex 2-dimensional manifold $N$ there is  an induced CR structure  on $M$ defined by  $\DS := TM \cap \tilde J(TM)$, where $\tilde J$ is the almost complex structure on $N$, 
with the almost complex structure $J$ on $\DS$ given by the restriction of $\tilde J$   to $\DS$. Equivalently, $V=T^{(0,1)}M:= \left(T_\C M \right) \cap \left(T^{(0,1)}N\right)$.   A CR structure (locally) CR equivalent to a hypersurface in a complex 2-manifold  is called (locally) {\em realizable}.

Two CR manifolds $(M_i, D_i, J_i)$, $i=1,2$, are {\em CR equivalent} if there exists a diffeomorphism $f:M_1\to M_2$ such that $\d f(D_1)=D_2$ and such that $(\d f|_{D_1})\circ J_1=J_2\circ (\d f|_{D_1})$. Equivalently, $(\d f)_\C (V_1)=V_2.$ A {\em CR automorphism} of a CR manifold is a CR self-equivalence, i.e.  a diffeomorphism $f:M\to M$ such that $\d f$ preserves $D$ and $\d f|_D$ commutes with $J$.   
Local CR equivalence and automorphism are defined similarly, by restricting the above definitions to  open subsets. 
An {\em infinitesimal CR automorphism}  is a vector field  whose (local) flow acts by (local) CR automorphisms. Clearly, the set $\Aut_\CR(M)$ of CR automorphisms  forms a group under composition and   the set $\aut_\CR(M)$ of infinitesimal CR automorphisms forms a Lie algebra  under the Lie bracket of vector fields. In fact,   $\Aut_\CR(M)$  is naturally a Lie group of dimension $\leq \dim(\aut_\CR(M))\leq 8$, see Corollary \ref{cor:aut}  in the Appendix. 

The basic example  of CR structure is the  unit sphere $S^3=\{|z_1|^2+|z_2|^2=1\}\subset\C^2$ equipped with the CR structure induced from $\C^2$. Its group of CR automorphisms is the 8-dimensional simple Lie group $\PUto$. The action of the latter on $S^3$ is seen by embedding $\C^2$ as an affine chart in $\CP^2$, $(z_1, z_2)\mapsto [z_1:z_2: 1]$, mapping $S^3$ unto the hypersurface given in homogeneous coordinates by $ |Z_1|^2+|Z_2|^2=|Z_3|^2$,  the projectivized null cone of the hermitian  form $|Z_1|^2+|Z_2|^2-|Z_3|^2$ in $\C^3$ of signature $(2,1)$. The group $\U_{2,1}$ is the subgroup of $\GL_3(\C) $ leaving invariant this hermitian  form and its projectivized action  on  $\CP^2$ acts on $S^3$  by CR automorphism. It is in fact its {\em full} automorphism group. This is a consequence of the Cartan's equivalence method, see Corollary \ref{cor:aut}.

\mn 

Here are two standard  results of the general theory of CR manifolds. 
\begin{prop}[`Finite type' property] 
\label{prop:cont}
Let $M ,M'$ be two   CR manifolds with $M$ connected and $f:M\to M'$ a local CR-equivalence. Then $f$ is determined by its restriction to any open subset of $M$. In fact it is determined of its  2-jet  at a single  point of $M$. 
\end{prop}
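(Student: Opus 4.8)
\mn The plan is to obtain both assertions from Cartan's solution of the CR equivalence problem, the main differential-geometric result of \cite{Ca1}, recalled in the Appendix. By that solution (see Corollary~\ref{cor:aut} and the discussion around it), to every non-degenerate $3$-dimensional CR manifold $M$ is attached a canonical principal bundle $\pi\colon P\to M$ whose structure group $H$ is the isotropy subgroup of a point in the flat model $S^3$ --- a connected group, since $\PUto$ is connected and $S^3$ simply connected --- of dimension $\dim\PUto-\dim S^3=5$, carrying a canonical coframing $\omega$ with values in the $8$-dimensional Lie algebra $\g$ of $\PUto$; so $\omega_p\colon T_pP\iso\g$ for every $p\in P$. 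Two properties of this construction do all the work. It is \emph{functorial}: a local CR-equivalence $f\colon M\to M'$ lifts to a unique bundle map $F\colon P\to P'$ with $\pi'\circ F=f\circ\pi$ and $F^*\omega'=\omega$. And it is \emph{second order}: $P$ is built from the (first-order) bundle of CR-adapted coframes of $M$ by one prolongation, so the value $F(p)$ for $p$ in the fibre $P_x$ depends only on the $2$-jet $j^2_xf$ (and on $p$), while by $H$-equivariance $F(p)$ for one $p\in P_x$ already determines $F|_{P_x}$ and recovers $j^2_xf$. Granting these, everything else is soft.

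\mn The soft part is the following rigidity property of coframings, which I would isolate first: \emph{if $N,N'$ carry coframings $\theta,\theta'$, if $\phi,\psi\colon N\to N'$ satisfy $\phi^*\theta'=\psi^*\theta'=\theta$ and $\phi(q_0)=\psi(q_0)$ for some $q_0\in N$, and if $N$ is connected, then $\phi=\psi$.} To prove it, let $(e_a)$ and $(e'_a)$ be the frames dual to $\theta$ and $\theta'$; the hypothesis $\phi^*\theta'=\theta$ says exactly that $e_a$ is $\phi$-related to $e'_a$ for each $a$, so $\phi$ carries the local flow of $e_a$ to that of $e'_a$, i.e. $\phi\bigl(\mathrm{Fl}^{e_a}_t(q)\bigr)=\mathrm{Fl}^{e'_a}_t\bigl(\phi(q)\bigr)$ wherever defined, and similarly for $\psi$. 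Hence $\phi=\psi$ on the set $S$ of points obtained from $q_0$ by finite compositions of such flows. Since the $e_a$ span each tangent space, $S$ contains a neighbourhood of each of its points, so $S$ is open; and ``reachable from'' is an equivalence relation (the flows are invertible), so its classes partition $N$ into open sets. Connectedness of $N$ forces $S=N$, and the lemma follows.

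\mn Now let $f,f'\colon M\to M'$ be two local CR-equivalences of the connected manifold $M$, with canonical lifts $F,F'\colon P\to P'$. Since $H$ is connected and $M$ is connected, $P$ is connected. Assume $j^2_{x_0}f=j^2_{x_0}f'$ at a single point $x_0$. Then $F$ and $F'$ agree at some (hence every) point of the fibre $P_{x_0}$, so the coframing lemma, applied with $\theta=\omega$ and $\theta'=\omega'$, gives $F=F'$ on all of $P$; composing with $\pi$ and using that $\pi$ is onto, $f=f'$ on $M$. This proves $f$ is determined by its $2$-jet at a single point. The first assertion is now immediate: if $f$ and $f'$ agree on a nonempty open $U\subset M$, they have the same $2$-jet at each point of $U$, hence $f=f'$ on $M$. (Equivalently, $\{x\in M:j^2_xf=j^2_xf'\}$ is closed by continuity, open by running the same lift-and-apply-the-lemma argument over a connected neighbourhood of $x_0$ --- whose preimage in $P$ is connected, $H$ being connected --- and nonempty; so it is all of $M$.)

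\mn The one nontrivial ingredient is the Appendix material: the existence, canonicity, functoriality and second-order character of the Cartan bundle $(P,\omega)$ and of the lifts $F$. I expect the only real obstacle to be the honest verification that a CR map induces a coframing-preserving bundle map whose fibrewise restriction both determines and is determined by the $2$-jet --- this is precisely the output of Cartan's normalization procedure, whose construction I would simply quote. I would also emphasize that \emph{no real-analyticity enters anywhere}: the non-degeneracy (``finite type'') hypothesis is exactly what lets Cartan's prolongation close up into an $\{e\}$-structure already in the smooth category, and that $\{e\}$-structure is the sole source of the rigidity.
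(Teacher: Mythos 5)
Your proposal is correct and follows essentially the same route as the paper: lift $f$ to the canonical coframing-preserving map of Cartan bundles and invoke rigidity of coframing-preserving maps on connected domains. The only difference is that you spell out the proof of that rigidity lemma (via uniqueness for flows of the dual frame fields) and the connectedness of the total space, both of which the paper simply quotes as standard.
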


\begin{proof}
The Cartan equivalence method associates canonically with each  CR 3-manifold $M$ a certain principal bundle $B\to M$ with 5-dimensional fiber, a  reduction of the bundle of second order frames on $M$, together with a canonical coframing of $B$ (an $e$-structure, or `parallelism'; see the Appendix for more details). Consequently, $f:M\to M'$  lifts to a bundle map  $\tilde f:B\to B'$ between the associated  bundles (in fact, the 2-jet of $f$, restricted to $B$), preserving the coframing. Now any  coframe  preserving map of coframed manifolds with a connected domain is determined by its value at a single point. 
Thus  $\tilde f$ is determined by its value at a single point  in $B$. It follows that $f$ is determined by its 2-jet at a single point in  $M$.   
\end{proof}

\begin{prop}[`Unique extension' property]\label {prop:ext} Let $f:U\to U'$ be a CR diffeomorphism between  open connected subsets  of $S^3$. Then $f$ can be extended uniquely to an element $g\in\Aut_{\CR}(S^3)=\PUto$.
\end{prop}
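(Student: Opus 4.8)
The plan is to obtain uniqueness directly from the finite-type property, Proposition \ref{prop:cont}, and existence from the fact that the Cartan bundle of the spherical structure is the group $\PUto$ itself, equipped with its Maurer--Cartan coframing. For uniqueness: if $g_1, g_2 \in \Aut_\CR(S^3) = \PUto$ both restrict to $f$ on $U$, then $g_1^{-1} g_2$ is a CR automorphism of the connected manifold $S^3$ that restricts to the identity on the nonempty open set $U$, so Proposition \ref{prop:cont} forces $g_1 = g_2$. It therefore suffices to produce a single global extension of $f$.

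For existence, I would first recall (from the Appendix and the proof of Proposition \ref{prop:cont}) that Cartan's equivalence method attaches to $S^3$ a principal bundle $\pi \colon B \to S^3$ with $5$-dimensional fiber and $8$-dimensional total space, carrying a canonical coframing $\omega$ of $B$, and that every local CR equivalence lifts canonically to a local coframing-preserving bundle map. The point specific to the \emph{spherical} case is that its model is homogeneous: $\PUto$ already acts on $S^3$ transitively by CR automorphisms, with a $5$-dimensional (connected) isotropy subgroup $H$, so by the uniqueness and naturality of the Cartan construction one identifies $B \cong \PUto$ as bundles over $S^3 \cong \PUto / H$, with $\omega$ the left Maurer--Cartan form of $\PUto$. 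Lifting $f$ gives a coframing-preserving diffeomorphism $\tilde f \colon \pi^{-1}(U) \to \pi^{-1}(U')$ between open subsets of the Lie group $\PUto$; since $U$ is connected and the fiber of $\pi$ is connected, $\pi^{-1}(U)$ is connected, and similarly $\pi^{-1}(U')$.

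The conceptual heart of the argument is then a rigidity statement for the Maurer--Cartan form: a diffeomorphism between connected open subsets of a Lie group $G$ that pulls the left Maurer--Cartan form back to itself is the restriction of a unique left translation. I would prove this by fixing a base point $b_0 \in \pi^{-1}(U)$, setting $g := \tilde f(b_0) \, b_0^{-1}$, and checking that $\phi := L_g^{-1} \circ \tilde f$ — which fixes $b_0$ and still preserves $\omega$ — is the identity: along any path $\gamma$ in $\pi^{-1}(U)$ starting at $b_0$, the curves $\phi \circ \gamma$ and $\gamma$ are solutions of the same time-dependent left-invariant ODE, namely $\omega(\dot c(t)) = \omega(\dot\gamma(t))$, with the same value at $t = 0$, hence coincide, so $\phi(\gamma(1)) = \gamma(1)$. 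Thus $\tilde f = L_g$ on $\pi^{-1}(U)$, with $g$ clearly unique since $\pi^{-1}(U) \neq \emptyset$. Finally, $L_g$ on $B = \PUto$ covers the CR automorphism of $S^3 = \PUto/H$ induced by the class of $g$; this element of $\Aut_\CR(S^3) = \PUto$ restricts to $f$ on $U$, which is what we wanted.

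The main obstacle I anticipate is not the ODE rigidity step, which is standard, but two bookkeeping points: first, justifying cleanly that for the flat model the Cartan bundle $B$ is the group $\PUto$ and $\omega$ is its Maurer--Cartan form (this is where Corollary \ref{cor:aut} and the homogeneity of $S^3$ enter); and second, being careful that the lift $\tilde f$ is genuinely defined over all of $U$ and not merely near a point — which is precisely the role played by the connectedness of $\pi^{-1}(U)$, and which must be checked before invoking the rigidity statement on the connected domain $\pi^{-1}(U)$.
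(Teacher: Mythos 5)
Your proof is correct and follows essentially the same route as the paper: lift $f$ to the Cartan bundle $B$, which for the spherical structure is $\PUto$ with its Maurer--Cartan coframing, match the lift with a (left translation by a) group element at a single point, and conclude by rigidity of coframing-preserving maps on a connected domain. The only difference is presentational: you re-derive that rigidity via the ODE argument for the Maurer--Cartan form, whereas the paper simply invokes Proposition \ref{prop:cont}.
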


\begin{proof} Let $B\to S^3$ be the  Cartan bundle associated with the CR structure, as in the proof of the previous proposition, and $\tilde f:B|_{U}\to B|_{U'}$  the canonical lift of $f$. Since $\Aut_\CR(S^3)$ acts transitively on $B$ (in fact, freely, see Corollary \ref{cor:aut}),  for any given $p\in B|_{U}$  there is a unique  $g\in \Aut_\CR(S^3)$ such that $\tilde f(p)=\tilde g(p).$ It follows, by the previous proposition, that $f=g|_{U}$. See also \cite{A}, Proposition 2.1,  for a  different proof. \end{proof}

Here is a simple consequence  of the last two propositions    that  will be useful for us later. 

\begin{cor}\label{cor:sph}Let $M$ be a connected 3-manifold and $\phi_i:M\to S^3$, $i=1,2$, be two immersions. Then the two induced spherical CR structures on $M$ coincide if and only if $\phi_2=g\circ \phi_1$ for some $g\in\Aut_\CR(S^3)=\PUto$.

\end{cor}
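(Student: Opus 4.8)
The plan is to derive this directly from Propositions \ref{prop:cont} and \ref{prop:ext}. The ``if'' direction is immediate: if $\phi_2 = g\circ\phi_1$ with $g\in\Aut_\CR(S^3)$, then since $g$ is a CR automorphism it preserves the CR structure of $S^3$, so the pullback CR structure $\phi_2^*(D,J) = \phi_1^*\,g^*(D,J) = \phi_1^*(D,J)$; here I am using that $\phi_i$ is an immersion so that ``induced CR structure'' makes sense as the pullback of $(D,J)$ under $\d\phi_i$ (locally an embedding onto a hypersurface, so Proposition \ref{prop:ext}'s hypersurface picture applies on small enough open sets). So the two induced structures on $M$ coincide.

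For the ``only if'' direction, suppose the two induced spherical CR structures on $M$ agree. First I would reduce to a local statement: fix a point $x_0\in M$ and a connected open neighborhood $W\ni x_0$ small enough that both $\phi_1|_W$ and $\phi_2|_W$ are embeddings onto open subsets $U_i := \phi_i(W)\subset S^3$. Then $f := \phi_2\circ(\phi_1|_W)^{-1}:U_1\to U_2$ is a diffeomorphism between open connected subsets of $S^3$, and because the CR structures pulled back to $W$ by $\phi_1$ and $\phi_2$ are equal, $f$ is a CR diffeomorphism. By Proposition \ref{prop:ext}, $f$ extends uniquely to some $g\in\Aut_\CR(S^3)=\PUto$. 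Thus $\phi_2 = g\circ\phi_1$ holds on $W$.

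The main obstacle — and the only step requiring care — is upgrading this local agreement to global agreement on all of $M$, given that $M$ is merely connected (not simply connected) and the $\phi_i$ are immersions, not embeddings. I would argue by a connectedness/continuation argument: let $S = \{\, x\in M : \phi_2 = g\circ\phi_1 \text{ on some neighborhood of } x \,\}$ for the specific $g$ produced above. This set is open by construction. To see it is closed, suppose $x_1\in\overline S$; pick a connected neighborhood $W_1$ of $x_1$ on which both $\phi_i$ are embeddings, and repeat the local argument to get $g_1\in\PUto$ with $\phi_2 = g_1\circ\phi_1$ on $W_1$. Since $W_1$ meets $S$, on the (nonempty, open) overlap we have $g\circ\phi_1 = \phi_2 = g_1\circ\phi_1$, and because $\phi_1$ is an immersion its image there is a $3$-dimensional submanifold, in particular not contained in any proper analytic subset, so $g$ and $g_1$ agree on an open subset of $S^3$; by Proposition \ref{prop:cont} (finite type) an element of $\PUto$ is determined by its $2$-jet at one point, so $g = g_1$, giving $\phi_2 = g\circ\phi_1$ near $x_1$ and hence $x_1\in S$. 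As $S$ is nonempty, open, and closed in the connected manifold $M$, we get $S = M$, i.e.\ $\phi_2 = g\circ\phi_1$ everywhere. This completes the proof. (The one point to be careful about is the ``$g$ and $g_1$ agree on an open set of $S^3$'' step: one should note that two elements of $\PUto$ agreeing on any nonempty open subset of $S^3$ must be equal, which again follows from Proposition \ref{prop:cont} applied to the CR automorphism $g^{-1}g_1$ of $S^3$, forced to be the identity since it fixes an open set.)
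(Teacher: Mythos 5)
Your proof is correct and takes essentially the same route as the paper: the ``only if'' direction extracts $g\in\PUto$ from a local CR diffeomorphism between embedded images via Proposition \ref{prop:ext}, exactly as the paper does. The only difference is in globalizing: your open--closed connectedness argument re-derives what the paper obtains in one line by applying Proposition \ref{prop:cont} to the two local CR-equivalences $\phi_2$ and $g\circ\phi_1$ from the connected manifold $M$ to $S^3$, which must coincide once they agree on an open subset.
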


\begin{proof}Let $U\subset M$ be  a connected open subset for which each  restriction 
$\left.\phi_i\right|_U$  is a  diffeomorphism unto its image $V_i:=\phi_i(U)\subset S^3$, 
$i=1,2$. Then $(\phi_2|_U)\circ(\phi_1|_U)^{-1}:V_1\to V_2$ is a CR diffeomorphism. By Proposition \ref{prop:ext}, there exists  $g\in \PUto$ such that $\phi_2|_U=(g\circ \phi_1)|_U.$ It follows, by Proposition \ref{prop:cont}, that $\phi_2 = g \circ \phi_1 .$ 
\end{proof}

\section{Left-invariant  CR structures on 3-dimensional Lie groups}\label{sec:homog}

A natural class of CR structures are the  {\em homogeneous} CR manifolds, i.e. CR manifolds admitting a transitive  group of automorphisms. 
Up to a cover, every such structure is given by a left-invariant CR structure on a 3-dimensional Lie group (see e.g. \cite[p.~69]{Ca1}). 
Each such Lie group is determined, again, up to a cover, by its Lie algebra. The list of possible Lie algebras is a certain sublist of the list of 3-dimensional real Lie algebras (the `Bianchi classification'), and was determined by \'E. Cartan in Chapter II of his 1932 paper \cite{Ca1}. 
In this section we first make some general remarks about such CR structures, then  state  an easy to apply criterion for sphericity. Our  main references here are Chapter II of \'E.~Cartan's paper \cite{Ca1} and \S 2 of Ehlers et al. \cite{ENS}.

\subsection{Preliminaries}
Let $G$ be a   3-dimensional Lie group $G$ with identity element $e$ and Lie algebra $\g=T_eG.$ To each $g\in G$ is associated the {\em  left translation} $G\to G$,  $x\mapsto gx$. A CR structure on $G$ is {\em left-invariant} if all left translations are  CR automorphisms. Clearly, a left-invariant CR structure $(\DS,J)$ is given uniquely by its value $(\DS_e,J_e)$ at $e$. Equivalently, it is given by a {\em non-real} 1-dimensional complex subspace $V_e\subset \g_\C:=\g\otimes\C$; i.e.  $V_e\cap \overline{V_e}=\{0\}$. By the  non-degeneracy of the CR structure,  $D_e\subset \g$ is not a Lie subalgebra; equivalently, $V_e\oplus \overline{V_e}\subset \g_\C$ is not a Lie subalgebra. In other words, {\em left-invariant CR structures are parametrized  by the non-real and  non-degenerate elements of  $\Pgc\simeq \CP^2$.} 
\begin{defn}\label{def:reg}
An element  $[L]\in \Pgc$ is  {\em real} if $[L]= [\overline L]$, {\em degenerate} if  $L,\overline L$ span a Lie subalgebra of $\g_\C$ \hl{and {\em regular} if it is neither real nor degenerate.}  The locus of regular elements in $\Pgc$ is denoted  by  $\Pgcr.$
\end{defn}

Equivalently, if $[L]=[L_1+iL_2]\in\Pgc$, where $L_1, L_2\in\g$, then $[L]$ is non-real if and only if $L_1, L_2$  are linearly independent and is regular  if and only if $L_1,L_2, [L_1,L_2]$ are linearly independent. 

\mn

Let $\Aut(G)$ be the group of Lie group automorphisms of $G$ and $\Aut(\g)$ the group of Lie algebra automorphisms  of $\g$. For each $f\in \Aut(G)$, $\d f(e)\in\Aut(\g)$, and if  $G$ is connected then  $f$ is determined uniquely by $\d f(e)$, so  $\Aut(G)$ embeds naturally as a  subgroup  $\Aut(G)\subset \Aut(\g)$. Every Lie algebra homomorphism of  a {\em simply connected} Lie group lifts uniquely  to a Lie group homomorphism, hence for simply connected $G$, $\Aut(G)=\Aut(\g)$.  The adjoint representation of $G$ defines a homomorphism $\Ad:G\to \Aut(G)$. Its image  is a normal subgroup  $\Inn(G)\subset\Aut(G)$, the  group of {\em inner} automorphisms (also called `the adjoint group'). The quotient group, $\Out(G):=\Aut(G)/\Inn(G)$, is the group of {\em outer} automorphisms. For a simple Lie group, $\Out(G)$ is a finite group. For example, $\Out(\SUt)$ is trivial and $\Out(\SLt)\simeq\Z_2,$ given by conjugation by any matrix $g\in \GL_2(\R)$ with negative determinant, e.g. $g=\diag(1,-1)$. 

Now $\Aut(G)$ clearly acts on the set of left-invariant CR structures on $G$. It also acts on  $\Pgcr$ by the  projectivized complexification of its action on $\g$. The map associating with a left-invariant CR structure $V\subset T_\C G$ the point $z=V_e\in\Pgcr$ is clearly  $\Aut(G)$-equivariant, hence if $z_1, z_2\in \Pgcr$  lie  on the same $\Aut(G)$-orbit  then  the corresponding  left-invariant  CR structures on $G$ are CR equivalent via an element of  $\Aut(G)$. As mentioned in the introduction, the converse is true   for  {\em aspherical} left-invariant CR structures.

\begin{thm}\label{thm:ens}
Consider two left-invariant aspherical CR structures  $V_i\subset T_\C G_i$ on two connected 3-dimensional Lie groups $G_i$, with corresponding  elements $z_i:=\left(V_i\right)_{e_i}\in \P((\g_i)_\C))_\mathrm{reg}$,  
where $e_i$ is the identity element of $G_i$, $i=1,2$. If the two CR structures are equivalent,  then there exists a group isomorphism $G_1\to G_2$ which is a CR equivalence, whose derivative at $e_1$ maps $z_1\mapsto z_2$. If the two CR structures are locally equivalent, then there exists a Lie algebra isomorphism $\g_1\to\g_2$, mapping  $z_1\mapsto z_2$. 
\end{thm}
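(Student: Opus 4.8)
\emph{Strategy.} The plan is to settle the \emph{local} assertion first --- this is where the asphericity hypothesis does all the work, and it is essentially Theorem~2.1 of \cite{ENS}, which one may also reprove via Cartan's equivalence method --- and then to deduce the \emph{global} assertion by a covering-space argument resting on Proposition~\ref{prop:cont}.

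\emph{Local assertion.} Let $f\colon U_1\to U_2$ be a local CR equivalence between connected open subsets $U_i\subset G_i$. Composing with left translations, arrange $e_1\in U_1$ and $f(e_1)=e_2$. The right-invariant vector fields of $G_i$ form a $3$-dimensional subalgebra $\g_i^R\subset\aut_\CR(G_i)$ generating the simply transitive left-translation action. Restricting to $U_1$ and pushing forward by $f$ gives a $3$-dimensional subalgebra $\mathfrak k\subset\aut_\CR(U_2)$, isomorphic to $\g_1$ and transitive at $e_2$. Since left-invariant CR structures are real-analytic, after lifting to the simply connected group $\widetilde{G}_2$ the elements of $\mathfrak k$ extend, by the Nomizu-type extension of local infinitesimal symmetries of real-analytic rigid geometric structures over a simply connected manifold (the same finite-type phenomenon underlying Proposition~\ref{prop:cont}), to genuine elements of $\aut_\CR(\widetilde{G}_2)$. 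Now asphericity forces $\dim\aut_\CR(\widetilde{G}_2)$ to be far below the value $8$ attained by the spherical model; in the principal case $\dim\aut_\CR(\widetilde{G}_2)=3$ this dimension count gives $\mathfrak k=\g_2^R$, so that $f$ conjugates left translations of $G_1$ to left translations of $G_2$ near $e_1$ and is therefore the germ of a local Lie group isomorphism. Differentiating at $e_1$ yields a Lie algebra isomorphism $\g_1\to\g_2$, which carries $z_1$ to $z_2$ because $f$ is a CR map. The remaining multiply-transitive non-spherical cases, where $\dim\aut_\CR(\widetilde{G}_2)=4$, form a short explicit list in Cartan's classification; on that list one checks that the two transitive $3$-dimensional subalgebras of $\aut_\CR(\widetilde{G}_2)$ are conjugate, which reduces matters to the principal case after composing $f$ with the conjugating automorphism.

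\emph{Global assertion.} Given a CR equivalence $f\colon G_1\to G_2$, again normalize $f(e_1)=e_2$. Lift $f$ to a CR equivalence $\widetilde f\colon\widetilde{G}_1\to\widetilde{G}_2$ of the universal covers with $\widetilde f(\widetilde e_1)=\widetilde e_2$; these covers carry left-invariant aspherical CR structures with the same associated points $z_i$. By the local assertion, $\widetilde f$ agrees near $\widetilde e_1$ with a local group isomorphism, which is necessarily the germ of the Lie group isomorphism $\psi\colon\widetilde{G}_1\to\widetilde{G}_2$ integrating $d\widetilde f(\widetilde e_1)$; by real-analyticity and connectedness of $\widetilde{G}_1$, $\widetilde f=\psi$ everywhere. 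Let $\Gamma_i=\ker(\widetilde{G}_i\to G_i)$, a discrete central subgroup. Since the homomorphism $\psi=\widetilde f$ intertwines the central deck-translations by $\Gamma_1$ with those by $\Gamma_2$, one gets $\psi(\Gamma_1)=\Gamma_2$; hence $\psi$ descends to a Lie group isomorphism $\bar\psi\colon G_1\to G_2$, which by construction of $\widetilde f$ equals $f$. Thus $f$ is itself a group isomorphism and a CR equivalence, and $df(e_1)\colon\g_1\to\g_2$ is a Lie algebra isomorphism sending $z_1$ to $z_2$.

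\emph{Main obstacle.} The decisive point is the local assertion --- precisely, that a local CR equivalence between \emph{aspherical} left-invariant structures is the germ of a local group isomorphism. This is exactly where asphericity cannot be dropped: for the spherical structure the $8$-dimensional local automorphism pseudo-group $\PUto$ overwhelms any such rigidity (compare Remark~\ref{rmrk:counter}), and one needs both real-analyticity and the extension/finite-type properties of the associated Cartan geometry. The subsidiary difficulties are the multiply-transitive non-spherical cases (dispatched from the explicit classification as above) and the deck-transformation bookkeeping in the globalization step.
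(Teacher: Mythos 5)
Your proposal is correct in substance but routes the argument differently from the paper, and one of its hedges deserves comment. The paper argues globally and at once: asphericity gives the sharp bound $\dim\aut_\CR(M)\le 3$ (Corollary~\ref{cor:aut}(b), since the automorphisms embed into the $3$-dimensional reduced bundle $B_1$), so the right-invariant fields $\X(G_i)$ exhaust $\aut_\CR(G_i)$ exactly; hence $\d f$ carries $\X(G_1)$ onto $\X(G_2)$, and a diffeomorphism of connected Lie groups fixing identities and matching right-invariant fields is a group isomorphism (proved there by showing the graph of $f$ is an integral leaf of a right-invariant involutive distribution, hence a subgroup). The local statement is then the same dimension count applied to $\aut_\CR(U)=\X(G)|_U$, with no extension of local symmetries and no passage to universal covers needed. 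Your local-first scheme, with the Nomizu-type extension and the descent through deck groups, also works and is a legitimate alternative (your identification of $\widetilde f$ with $\psi$ is cleanest via Proposition~\ref{prop:cont} rather than an unproved real-analyticity of $\widetilde f$), but it buys nothing here and costs heavier machinery.

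The one point to repair: your ``remaining multiply-transitive non-spherical cases with $\dim\aut_\CR(\widetilde G_2)=4$'' do not exist. For $3$-dimensional CR structures the symmetry dimension is either $8$ (spherical) or at most $3$; this gap is exactly Corollary~\ref{cor:aut}(b) and is the engine of the whole theorem. Your proposed treatment of that phantom case --- that the two transitive $3$-dimensional subalgebras would be conjugate --- is moreover precisely the kind of claim that fails when the symmetry algebra is genuinely larger: Remark~\ref{rmrk:counter} exhibits the $4$-dimensional symmetry group of the elliptic spherical structure containing two transitive $3$-dimensional subgroups ($\SLt$ and $\mathrm{Aff}(\R)\times\R/\Z$) that are not even isomorphic. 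So that paragraph should be deleted and replaced by the sharp bound; with that change your argument is complete.
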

\begin{proof}
Let  $f:G_1\to G_2$ be a  CR equivalence. By composing $f$ with an appropriate left translation, either
 in $G_1$ or  in $G_2$, we can assume, without loss of generality,  that $f(e_1)=e_2$. Since $f$ is a CR equivalence, $(\d f)_\C V_1=V_2$. In particular, $(\d f)_\C$ maps $z_1\mapsto  z_2$.   We next show that $f$ is a group isomorphism.

For any 3-dimensional Lie group $G$, the space  $\X(G)$ of right-invariant vector fields  is a 3-dimensional Lie subalgebra of the space  of vector fields on $G$, generating left-translations on $G$. Hence if $G$ is  equipped with a left-invariant CR structure then  $\X(G)\subset\aut_\CR(G).$ If the CR structure  is aspherical then the Cartan equivalence method  implies that $\dim(\aut_\CR(M))\leq 3$, see Corollary \ref{cor:aut} of the Appendix.  Thus   $\X(G)=\aut_\CR(G).$

Now since   $f:G_1\to G_2$ is a  CR equivalence, its derivative defines a Lie algebra isomorphism $\aut_\CR(G_1)\simeq
\aut_\CR(G_2)$. It follows, by the last paragraph, that $\d f(\X(G_1))=\X(G_2)$. This implies that $f$ is a group isomorphism by a result from the theory of Lie groups: If $f:G_1\to G_2$ is  a diffeomorphism  between two connected Lie groups such that $f(e_1)=e_2$ and $\d f(\X(G_1))=\X(G_2)$ then $f$ is a group isomorphism.

We could not  find a reference for the  (seemingly standard)  last statement  so we  sketch a proof here. 
Let $G=G_1\times G_2$ and $H=\{(x,f(x))| x\in G_1\}$ (the graph of $f$). Then $f$ is a group isomorphism if and only if $H\subset G$ is a subgroup. Let   $\h:=T_eH$, where $e=(e_1,e_2)\in G$, and  let $\mathcal{H}\subset TG$ the extension of $\h$ to a right-invariant sub-bundle. Then, since $\d f: \X(G_1)\to \X(G_2)$ is a Lie algebra isomorphism, $\h\subset \g$ is a Lie subalgebra, $\mathcal{H}$ is integrable   and $H$ is the integral leaf  of $\mathcal{H}$ through $ e\in G$   (a maximal connected integral submanifold of $\mathcal{H}$). It follows that $Hh$ is also an integral leaf of $\mathcal{H}$ for every $h\in H$. But $e\in H\cap Hh$, hence $H=Hh$ and so $H$ is closed under multiplication and inverse, as needed. 

To prove the last statement of the theorem, suppose  $f:U_1\to U_2$ is a  CR equivalence, where $U_i\subset G_i$ are  open subsets, $i=1,2$. By composing $f$ with appropriate left translations
 in $G_1$ and  $G_2$, we can assume, without loss of generality,  that $U_i$ is a neighborhood of $e_i\in G_i$, $i=1,2$,  and that
 $f(e_1)=e_2$. Since $f$ is a CR equivalence, its complexified derivative $(\d f)_\C:T_\C U_1\to T_\C U_2$ maps $V_1|_{U_1}$ isomorphically onto $ V_2|_{U_2}$; in particular, it  maps $z_1\mapsto z_2.$ It remains to show that $\d f(e_1):\g_1\to\g_2$ is a Lie algebra isomorphism.

For any Lie group $G$, the  Lie bracket of two elements   $X_e, Y_e\in \g=T_eG$ is defined  by evaluating at $e$
 the  commutator  $XY-YX$ of their unique  extensions to {\em left}--invariant vector fields $X,Y$ on $G$. If we use
  instead  {\em right}--invariant vector fields, we obtain the negative of the  standard Lie bracket. Now right-invariant 
  vector fields generate left translations, hence if $G$ is a 3-dimensional Lie group equipped with a left-invariant CR
   structure,  there is a natural inclusion  of Lie algebras $\g_-\subset \aut_\CR(G),$ where $\g_-$ denotes $\g$  equipped
    with the negative of the standard bracket. For any aspherical  CR structure on a 3-manifold $M$ we have
     $\dim(\aut_\CR(M))\leq 3$, hence for any open subset $U\subset G$ the restriction of a left-invariant aspherical CR 
     structure on $G$ to $U$ satisfies $\aut_\CR(U)=\X(G)|_U\simeq\g_-.$

Next,  since $f:U_1\to U_2$ is a CR equivalence, its derivative  $\d f$  defines 
 a Lie algebra isomorphism $\aut_\CR(U_1)\to\aut_\CR(U_2)$.  
By the previous paragraph,   $\d f(e)$ is a Lie algebra isomorphism  $(\g_1)_-\to (\g_2)_-$, and thus is also a Lie algebra isomorphism $\g_1\to\g_2$. 
\end{proof}

\subsection{A sphericity criterion  via well-adapted coframes}
We formulate  here a simple criterion for deciding whether a   left-invariant CR structure $z\in\Pgcr$ on a Lie group $G$ is spherical or not. 
The basic tools  are found in the seminal papers of Cartan \cite{Ca1},\cite{Ca2}.  We defer a more complete discussion to the Appendix.  

\begin{defn}\label{def:adap}
 Let $M$ be a 3-manifold with a CR structure $V\subset T_\C M$. An {\em adapted coframe}  is a pair of 1-forms $(\phi, \phi_1)$  with $\phi$ real and $\phi_1$ complex, such that  $\phi |_V=\phi_1 |_V = 0$ and $\phi\wedge \phi_1\wedge\bar\phi_1$ is non-vanishing.  The coframe is  {\em well-adapted} if $\d\phi = i\phi_1\wedge\bar{\phi_1}$. 
\end{defn}

Adapted and \wa coframes always exist, locally. Starting with an arbitrary
non-vanishing local section $L$ of $V$ (a complex vector field of type $(0,1)$) and 
a contact form $\theta$ (a non-vanishing local section of  $D^\perp\subset T^*M$), define
 the complex $(1,0)$-form $\phi_1$ by $\phi_1(L)=0$, $\bar \phi_1(L)=1$. Then 
 $(\phi, \phi_1)$ is an adapted coframe and any other adapted coframe is given by 
 $\tilde\phi=|\lambda|^2\phi,$ $\tilde \phi_1=\lambda(\phi +\mu \phi_1)$ 
 for arbitrary complex functions $\mu$,\ $\lambda$, with $\lambda$ non-vanishing. 
 It is then easy to verify that for any $\lambda$ and  $ \mu=i\,L(u)/u$ where   
 $ u=|\lambda|^2$, the resulting coframe $(\tilde \phi, \tilde\phi_1)$ is well-adapted.  

Given a well-adapted coframe $(\phi, \phi_1)$,  decomposing  $\d\phi, \d\phi_1$ in the same coframe we get  
\begin{align}\label{eq:str}
\begin{split}
\d\phi &= i\phi_1\wedge\bar{\phi_1}\\
\d\phi_1&=a\,\phi_1\wedge\bar \phi_1+b\,\phi\wedge \phi_1+c\,\phi\wedge \bar\phi_1,
\end{split}
\end{align}
for some complex valued functions $a,b,c$ on $M$. For a left-invariant CR structure on a 3-dimensional group $G$ one can choose a (global) well-adapted coframe of left-invariant 1-forms, and then $a,b,c$ are constants. 

\begin{prop}\label{prop:CRc}
Consider a   CR structure on a 3-manifold given by a   well adapted coframe $\phi, \phi_1$, satisfying equations \eqref{eq:str}  for some constants $a,b,c\in\C.$ The CR structure is spherical if and only if $c \left(2|a|^2+9 ib\right)=0.$
\end{prop}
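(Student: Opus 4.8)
The plan is to run É.~Cartan's equivalence method for $3$-dimensional CR structures, in the form recalled in the Appendix, isolate the single relative invariant whose vanishing characterizes sphericity, and then evaluate it on a left-invariant (hence constant-coefficient) well-adapted coframe.

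\emph{Building the Cartan connection.} Starting from the given well-adapted coframe $(\phi,\phi_1)$ with structure equations \eqref{eq:str}, I would differentiate these equations, introduce the remaining fiber ``connection'' $1$-forms together with the prolongation variables, and carry out the standard normalizations, repeatedly using $\d^{2}=0$ (the Bianchi identities) to absorb torsion. This is the step that produces the canonical $\suto$-valued Cartan connection $\omega$ on the $8$-dimensional bundle $B\to M$ described in the Appendix. For a $3$-dimensional CR manifold the curvature $\Omega=\d\omega+\omega\wedge\omega$ is semibasic of a single definite type and is governed by one complex-valued relative invariant $Q$ on $B$ --- Cartan's $R$ --- which changes by a nonzero factor under the structure group, so that the condition $Q\equiv 0$ is intrinsic; the CR structure is spherical exactly when $Q\equiv 0$. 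This last fact (flatness of the Cartan connection $\iff$ local equivalence to the model $S^3$) is what I would import from the Appendix.

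\emph{Specializing to a left-invariant coframe.} For a left-invariant well-adapted coframe on a $3$-dimensional Lie group the functions $a,b,c$ are constants, so every coframe-derivative of $a,b,c$ occurring in the general formula for $Q$ drops out and the formula collapses to a universal polynomial in $a,\bar a,b,\bar b,c,\bar c$. The content of the proposition is that this polynomial is a nonzero constant multiple of $c\,(2|a|^{2}+9ib)$. Verifying this --- i.e.\ actually doing the normalization bookkeeping and the final algebraic simplification --- is the bulk of the work and the step I expect to be the main obstacle; if the Appendix already records the general closed-form expression for $Q$ in terms of the structure functions of an arbitrary well-adapted coframe and their derivatives, this step is only a substitution.

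\emph{Conclusion and checks.} Putting the two steps together, the structure is spherical iff $c\,(2|a|^{2}+9ib)=0$. As consistency checks I would examine the two degenerate regimes: if $c=0$ then $\phi_1\wedge\d\phi_1=0$, so locally $\phi_1=g\,\d h$ for a nonconstant CR function $h$ (consistent with $c=0$ structures being realizable, hence spherical); and if $c\neq 0$ the condition forces $b=\tfrac{2i}{9}|a|^{2}$. In \S\ref{sec:sl2} and \S\ref{sec:su2} these must pick out precisely the spherical members of the elliptic and hyperbolic families on $\SLt$ and of the $1$-parameter family on $\SUt$, which gives an independent check on the numerical coefficients $2$ and $9$. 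A more hands-on alternative --- directly constructing CR immersions into $S^3$ when $c(2|a|^{2}+9ib)=0$, and otherwise ruling out sphericity via $\dim(\aut_\CR)<8$ (Corollary~\ref{cor:aut}) --- seems to require essentially the same computation, so I would keep the equivalence-method route as the main line.
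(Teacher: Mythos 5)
Your main line is the same as the paper's: the paper proves this proposition simply by citing Corollary~\ref{cor:sph-curv}, which in turn rests on Proposition~\ref{prop:CRc:Appendix} of the Appendix, where the forms $\phi_2,\phi_3,\phi_4$ and the functions $r,s$ solving the structure equations \eqref{eq:cr2} are found explicitly for a constant-coefficient well-adapted coframe by substituting the ansatz $\phi_j=A_j\phi+B_j\phi_1+C_j\bar\phi_1$ and solving the resulting algebraic system (by computer), giving $r=ic\left(\tfrac{|a|^2}{3}+\tfrac{3ib}{2}\right)$, whose vanishing is exactly $c\left(2|a|^2+9ib\right)=0$. So your strategy is right, but the one step you explicitly defer --- producing the polynomial $c(2|a|^2+9ib)$ --- is the entire content of the proposition; as written, the proposal establishes only that sphericity is equivalent to the vanishing of \emph{some} universal polynomial in $a,\bar a,b,\bar b,c,\bar c$, not that it is this one. (Note also that the paper works with a fixed adapted coframe on $M$ itself, via the section $\sigma:M\to B$ of Theorem~A.2(e), rather than normalizing on the full bundle $B$; this sidesteps most of the prolongation bookkeeping you anticipate.) One genuine error in your consistency check: from $c=0$ you get $\phi_1\wedge\d\phi_1=0$ and hence local realizability, but realizability does \emph{not} imply sphericity --- a generic hypersurface in $\C^2$ is realizable and aspherical --- so that argument cannot confirm the $c=0$ half of the criterion; you would still need the computed value of $r$ (or an explicit local equivalence with $S^3$) there as well.
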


This is a consequence of Cartan equivalence method. See Corollary \ref{cor:sph-curv} in the Appendix.

\subsection{Realizability } 
 Let $(M,D,J)$ be a CR 3-manifold and $N$ a complex manifold. A smooth function $f:M\to N$ is a {\em CR map}, or simply {\em CR}, if $\tilde J\circ (\d f|_D)=  (\d f|_D)\circ J$, where $\tilde J:TN\to TN$ is the almost complex structure on $N$. 
Equivalently, $(\d f )_\C V\subset T^{(0,1)}N.$ A {\em realization} of $(M,D,J)$  is a CR embedding of $M$ in a   (complex) 2-dimensional $N$. A {\em local realization} is a CR  immersion in such $N$. 

The following lemma  is useful for finding CR immersions and embeddings of  left-invariant CR structures on Lie groups. 

\begin{lemma}\label{lemma:real}Let $G$ be a 3-dimensional  Lie group with a left-invariant CR structure  $(D,J)$, with corresponding  $[L]\in\Pgcr$. Let $\rho:G\to \GL(U)$ be a finite dimensional complex representation, $u\in U$ and  $\mu:G\to U$  the evaluation map  $g\mapsto \rho(g)u$. Then $\mu$ is  a  CR map if and only if $\rho'(L)u=0,$ where $\rho':
\g_\C\to\End(U)$ is the  complex linear extension of $(\d\rho)_e:\g\to\End(U)$ to $\g_\C$. 
\end{lemma}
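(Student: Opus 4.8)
The plan is to unwind the coordinate-free definition of a CR map into a single scalar (really $U$-valued) equation on $\mu$ itself, and then evaluate the left-hand side of that equation using left-invariance together with the definition of $\rho'$.

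First I would fix the left-invariant complex vector field $L$ of type $(0,1)$ whose value at $e$ represents $[L]\in\Pgcr$; it is nowhere vanishing, so $V$ is the $C^\infty(G)$-span of $L$, and consequently any section of $V$ has the form $fL$ with $(fL)\mu=f\,(L\mu)$. Hence it suffices to check the CR condition on $L$ alone. Since $N=U$ is a complex vector space, for each $g$ the tangent space $T_{\mu(g)}U$ is canonically identified with $U$, with $T^{(1,0)}U$ and $T^{(0,1)}U$ the $\pm i$-eigenspaces of multiplication by $i$; a complex tangent vector lies in $T^{(0,1)}U$ if and only if it is annihilated by every $\C$-linear functional $\ell\in U^*$ (the linear holomorphic coordinates on $U$). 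Therefore $(\d\mu)_\C V\subset T^{(0,1)}U$ is equivalent to $L(\ell\circ\mu)=\ell\bigl(L\mu\bigr)=0$ for all $\ell\in U^*$, i.e. to the vanishing of the $U$-valued function $L\mu$ on $G$. (Here $L\mu$ is defined by letting $L$ act componentwise after choosing a basis of $U$, the result being basis-independent, and $\ell(L\mu)=L(\ell\circ\mu)$ because $\ell$ is $\C$-linear and $L$ is a derivation.)

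Next I would compute $L\mu$ explicitly. Write $L=X+iY$, where $X,Y$ are the left-invariant real vector fields on $G$ extending $L_1,L_2\in\g$ with $L=L_1+iL_2$ at $e$. Using $\mu(g\exp tX)=\rho(g)\rho(\exp tX)u$ and differentiating at $t=0$ gives $(X\mu)(g)=\rho(g)\,\rho'(L_1)u$, and similarly $(Y\mu)(g)=\rho(g)\,\rho'(L_2)u$, where $\rho'=(\d\rho)_e:\g\to\End(U)$. By complex linearity of $\rho':\g_\C\to\End(U)$ this yields $(L\mu)(g)=\rho(g)\,\rho'(L)u$ for every $g\in G$. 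Since each $\rho(g)\in\GL(U)$ is invertible, $L\mu\equiv 0$ on $G$ if and only if $\rho'(L)u=0$; combined with the previous paragraph this proves the lemma.

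I do not expect a genuine obstacle here; the only points demanding care are the bookkeeping around complexification — how the complex vector field $L$ acts on the vector-valued map $\mu$, and that $\rho'$ is the $\C$-linear extension — and the translation between the invariant condition $(\d\mu)_\C V\subset T^{(0,1)}N$ and the vanishing of the single $U$-valued function $L\mu$. (Note that the regularity of $[L]$ plays no role in this lemma; it becomes relevant only when one further asks that such a CR map $\mu$ be an immersion.)
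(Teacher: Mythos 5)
Your proof is correct and follows essentially the same route as the paper: both arguments reduce to computing the derivative of the orbit map along the left-invariant extension of $L$, obtaining $(L\mu)(g)=\rho(g)\rho'(L)u$ (the paper phrases this as reducing to the identity by $G$-equivariance, you phrase it via invertibility of $\rho(g)$, which is the same point). The only cosmetic difference is that you unwind the condition $(\d\mu)_\C V\subset T^{(0,1)}U$ through $\C$-linear functionals, whereas the paper states it directly as $\d\mu(JX)=i\,\d\mu(X)$.
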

\begin{proof}
$\mu$ is clearly $G$-equivariant, hence $\mu$ is CR if  and only if $\d\mu( JX)=i\,\d\mu(X)$ for some (and thus all) non-zero $X\in D_e$. Now  $\d\mu(X)=\rho'(X)u,$ hence the CR condition on $\mu$ is $\rho'(X+iJX)u=0,$ for all $X\in D_e$. Equivalently, $\rho'(L)u=0$ for some (and thus all) non-zero $L\in \g_\C$ of type $(0,1)$. 
\end{proof}

Here is an application of the last  lemma, often  used by Cartan in Chapter II of \cite{Ca1}.

\begin{prop}\label{prop:real}
Let $G$ be a 3-dimensional  Lie group with a left-invariant CR structure   $[L]\in\Pgcr$. Then the evaluation map  $\mu:G\to\Pgc$, $g\mapsto [\Ad_g(L)]$, is  a $G$-equivariant CR map, whose image $\mu(G)\subset\Pgc$, the $\Ad_G$-orbit of $[L]\in\Pgc$,  is  of dimension 2 or 3. It follows that if $L$ has a trivial centralizer in $\g$ then  $\mu(G)$ is 3-dimensional and hence  $\mu$ is a local realization  of the CR structure on $G$  in $\Pgc\simeq\CP^2$. 
\end{prop}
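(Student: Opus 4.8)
The plan is to use Lemma~\ref{lemma:real} with $\rho=\Ad$ (and $U=\g_\C$, $u=L$), together with a dimension count for the orbit. First I would apply the lemma directly: the evaluation map $g\mapsto \Ad_g(L)\in\g_\C$ is $G$-equivariant and, since $\Ad_g$ is linear and invertible, the projectivized map $\mu:G\to\Pgc$, $g\mapsto[\Ad_g(L)]$, is well-defined and still $G$-equivariant. To see it is CR, by Lemma~\ref{lemma:real} I need $\ad(L)L=0$; but $\ad(L)L=[L,L]=0$ trivially, so $\mu$ composed with $G\to\g_\C$ is CR, and since projectivization $\g_\C\setminus\{0\}\to\Pgc$ is holomorphic, $\mu$ itself is CR. (Alternatively one can run the same infinitesimal computation as in the proof of the lemma directly on $\Pgc$, noting that the line through $\ad(L)L=0$ is unchanged.)

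Next I would pin down the dimension of the orbit $\mu(G)\subset\Pgc$. The orbit is a homogeneous space $G/G_{[L]}$, where $G_{[L]}$ is the stabilizer of the line $[L]$ under $\Ad$. Its Lie algebra is $\{X\in\g : \ad(X)L\in\C L\}$. Since $\dim\Pgc=2$, the orbit has dimension at most $2$ only if this stabilizer subalgebra is at least $1$-dimensional; in general it has dimension $3$, $2$, or (a priori) less. To rule out dimension $\le 1$, I would invoke the regularity of $[L]$: writing $L=L_1+iL_2$, regularity (Definition~\ref{def:reg}) says $L_1,L_2,[L_1,L_2]$ are linearly independent, so $L$ is a regular (non-nilpotent-centralizing) element and in particular $\ad(L)$ is not identically zero on $\g_\C$; more carefully, if the orbit had dimension $\le 1$ the image would be $G$-invariant of dimension $\le 1$ inside $\CP^2$, forcing $\ad(\g)L\subseteq\C L\oplus(\text{something 1-dim})$, which contradicts the three-dimensional span condition. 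I expect the cleanest route is: the orbit map $\mu$ is a CR map from a contact $3$-manifold, and a CR map to a complex surface cannot have rank $\le 1$ at a point where $D$ is a contact distribution transverse to... — actually the safe elementary argument is just the stabilizer dimension count, so I would phrase it as: $\dim\mu(G)=3-\dim\g_{[L]}$ and $\dim\g_{[L]}\le 1$ because $[L]$ regular forces $L_1,L_2\in\g_{[L]}$ to be impossible simultaneously (they would give $[L_1,L_2]\in\Span(L_1,L_2)$), hence $\dim\mu(G)\in\{2,3\}$.

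Finally, the last assertion: if $L$ has trivial centralizer in $\g$ (meaning $\{X\in\g:[X,L]=0\}=0$ in $\g_\C$, equivalently $\ker\ad(L)=0$), then a fortiori the stabilizer subalgebra $\{X:\ad(X)L\in\C L\}$ — wait, this needs care, so here I would argue: if $X\in\g_{[L]}$, then $\ad(X)L=\lambda L$ for some $\lambda\in\C$; applying $\ad(X)$ to the reality... hmm, the trivial-centralizer hypothesis alone gives $\ker\ad(L)=0$, and since $\ad(X)L=-\ad(L)X$, having $\ad(L)X\in\C L$ for all $X$ in a $2$-plane would again violate regularity-type independence, so $\dim\g_{[L]}\le 1$, and combined with... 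Actually the honest statement is that trivial centralizer of $L$ gives $\dim\g_{[L]}\le 1$ directly is not quite automatic, so I would instead show: if $\dim\mu(G)=2$ then $\g_{[L]}$ is $1$-dimensional, spanned by some $X_0$ with $[X_0,L]=\lambda L$; taking $\lambda=0$ is the case of nontrivial centralizer, and $\lambda\ne 0$ would make $\ad(X_0)$ have $L$ as eigenvector, and chasing this through the real structure (since $\g$ is real, $\overline{L}$ is an eigenvector with eigenvalue $\bar\lambda$, and $L_1,L_2,[L_1,L_2]$ independent) forces a contradiction with $\dim\g=3$. Thus under trivial centralizer the orbit is $3$-dimensional, so $\mu:G\to\mu(G)$ is a $G$-equivariant CR submersion between $3$-manifolds, hence a local diffeomorphism and therefore a CR immersion, i.e.\ a local realization of the CR structure in $\CP^2$. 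The main obstacle is the dimension bookkeeping in the middle step: cleanly excluding $\dim\mu(G)\le 1$ and then tying "trivial centralizer'' to "$3$-dimensional orbit'' without circularity; I would organize it as a short linear-algebra lemma about $\ad(L)$ acting on the $3$-dimensional $\g_\C$ under the regularity hypothesis, and keep everything else formal.
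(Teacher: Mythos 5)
Your first step (CR-ness of $\mu$ via Lemma \ref{lemma:real} applied to $\rho=\Ad$, $u=L$, using $[L,L]=0$ and the holomorphy of the projection $\g_\C\setminus\{0\}\to\Pgc$) is exactly the paper's argument. The two later steps each have a genuine gap. For the dimension count, your justification of $\dim\g_{[L]}\le 1$, where $\g_{[L]}=\{X\in\g:[X,L]\in\C L\}$, only rules out the single $2$-plane $\Span\{L_1,L_2\}$ from lying in $\g_{[L]}$; it does not exclude some \emph{other} $2$-dimensional subspace stabilizing the line. The paper closes this by the idea you mentioned and then abandoned: since $\mu$ is CR, $\d\mu(D_e)$ is a $\tilde J$-invariant (hence even-dimensional) subspace of $T_{[L]}\Pgc$, so it suffices to show it is nonzero, and $\d\tilde\mu(L_2)=[L_2,L]=-[L_1,L_2]$ cannot lie in $\C L$ because a real vector in $\C L$ must vanish ($[L]$ non-real), which would force $[L_1,L_2]=0$ against non-degeneracy. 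If you insist on a purely algebraic version, note that any $2$-plane $P\subseteq\g_{[L]}$ meets $D_e$ nontrivially, and a nonzero real $Y=\alpha L+\bar\alpha\bar L\in P$ gives $[Y,L]=2i\bar\alpha\,[L_1,L_2]\in\C L$, whence the same contradiction; your argument as written does not reach this.

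The final step is where the proposal actually breaks. You correctly sense that triviality of the centralizer of $L$ does not obviously give $\g_{[L]}=0$, but the contradiction you hope to extract from $[X_0,L]=\lambda L$ with $\lambda\ne 0$ does not exist. Concretely, for $\g=\sut$ and the nilpotent $L=\left(\begin{smallmatrix}0&0\\ 2&0\end{smallmatrix}\right)$ (this is $L_1$ of Proposition \ref{prop:sut}, a regular element), the centralizer of $L$ in $\sut$ is trivial, yet $X_0=\diag(i,-i)\in\sut$ satisfies $[X_0,L]=-2iL$, so $\g_{[L]}$ is $1$-dimensional and the orbit of $[L]$ in $\Pgc$ is only $2$-dimensional. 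Thus no eigenvalue chase through the real structure will produce the contradiction; the hypothesis one actually needs (and the one the paper verifies in its applications, e.g.\ in the proofs of Proposition \ref{prop:slt2}(b) and Proposition \ref{prop:sut}(e)) is discreteness of the stabilizer of the \emph{line} $[L]$, i.e.\ $\g_{[L]}=0$, not triviality of the centralizer of $L$. Be aware that the paper's own proof stops after establishing $\dim\mu(G)\ge 2$ and offers no argument for the last implication either, so your unease was well founded --- but the repair is to strengthen the hypothesis, not to push the $\lambda\ne 0$ case to a contradiction.
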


\begin{proof}
Let $\tilde\mu: G\to \g_\C\setminus\{0\},$ $g\mapsto \Ad_g L,$
and $\pi: \g_\C\setminus\{0\}\to\Pgc$, $B\mapsto [B]$. 
Then $\mu=\pi\circ\tilde\mu$ and $\pi$ is holomorphic, 
hence it is enough to show that $\tilde\mu$ is CR at $e\in G$. Applying Lemma 
 \ref{lemma:real} with $\rho=\Ad_G$, $u=L$, we have that $\rho'(L)L=[L,L]=0$, hence $\tilde\mu$ is CR, and so is $\mu.$  
 
Let  $\OO=\mu(G)$.  Since $\mu$ is CR, $\d\mu(D)$ is a $\tilde J$-invariant and $G$-invariant subbundle of $T\OO$, where $\tilde J$ is the almost complex structure of $\Pgc$. Thus in order to show that $\dim(\OO)\geq 2$  it is enough to show that $\d\mu(D_e)\neq 0$. 
  Equivalently, $\d\tilde\mu (D_e)\not\subset \Ker((\d\pi)_L)=\C L$.  Let $L=L_1+iL_2,$ with $L_1, L_2\in \g$. Then $L_2=JL_1$ and   so $\d\tilde\mu ( L_2)=[L_2,L]=-[L_1,L_2]$. But $[L]$ is non-real, so $(\C L)\cap \g=\{0\}$, hence  $[L_1,L_2]\in\C L$ implies  $[L_1,L_2]=0$, so $D_e=\Span\{L_1, L_2\}\subset \g$ is an (abelian) subalgebra,  in contradiction to the non-degeneracy assumption on the CR structure. 
 \end{proof}

\section{$\SLt$}\label{sec:sl2}

We illustrate the results of the previous section first of all with a detailed description of left-invariant CR structures on  the group $G=\SLt$, where $\g=\slt$, the set of $2\times 2$ traceless real matrices and $\g_\C=\sltc$, the set of $2\times 2$ traceless complex matrices. 

Here is a summary of the results:  for $G=\SLt$, the set of left-invariant CR structures $\Pgcr$ is identified $\Aut(G)$-equivariantly with the set  of unordered pairs of points $\zeta_1, \zeta_2\in\C\setminus\R$,  $\zeta_1\neq \bar\zeta_2$, on which  $\Aut(G)$ acts   by orientation preserving isometries of the  usual hyperbolic  metric in each of the half. With this description, it is easy to determine  the $\Aut(G)$-orbits. There are two families of orbits: the  `elliptic' family corresponds to pairs of points  in the same half-plane,  with the spherical structure corresponding to a `double point', $\zeta_1=\zeta_2$;  the `hyperbolic' family corresponds to non-conjugate pairs of points   in opposite half planes.
 Each orbit is labeled uniquely by the hyperbolic distance $d(\zeta_1,\zeta_2)$ in the elliptic case, or $d(\zeta_1, \bar \zeta_2)$  in the hyperbolic case. All structures, except the spherical elliptic one, are locally realized as   adjoint orbits in  $\Psltc=\CP^2$, either inside $S^3=\{[L]\st \tr(L\bar L)=0\}$ (in  the hyperbolic case) or in its exterior (in the elliptic case). The  elliptic spherical structure  embeds as any of the  generic orbits of the standard action on $\C^2$. 

\sn

We begin with the conjugation action of $\SLtc$ on $\Psltc$ (this will be useful also for the next example of $G=\SUt$).  With each $[L]\in\Psltc$ we associate 
an unordered  pair of points $\zeta_1, \zeta_2\in\C\cup\infty,$ possibly repeated, the roots of the quadratic polynomial 
\be\label{eq:p} 
p_L(\zeta):=c\zeta^2-2a\zeta-b=c(\zeta-\zeta_1)(\zeta-\zeta_2), \qquad L=\left(
\begin{array}{rr}
a &b\\
c&-a
\end{array}\right).
\ee
Clearly, multiplying $L$ by a non-zero complex constant does not affect  $\zeta_1, \zeta_2$. 

\begin{lemma}\label{lemma:equiv}
Let $S^2(\CP^1)$ be the set of unordered pairs of points  $\zeta_1, \zeta_2\in \C\cup\infty=\CP^1$. 
 Then: 
 \benum
 \item  The map $\P(\sltc)\to S^2(\CP^1)$, assigning to $[L] \in \P(\sltc)$ the roots of $p_L$, as in equation \eqref{eq:p}, is an 
$\SLtc$-equivariant bijection, where $\SLtc$ acts on $S^2(\CP^1)$ via M\"obius 
transformations on $\CP^1$ (projectivization of the standard action on $\C^2$);

 \item   Complex  conjugation,  $[L]\mapsto [\overline L]$, corresponds, under the above bijection,  to complex conjugation of the roots of $p_L$, $\{\zeta_1,\zeta_2\}\mapsto \{\bar\zeta_1, \bar\zeta_2\}$. 
\end{enumerate}
\end{lemma}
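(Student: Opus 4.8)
The plan is to establish both parts essentially by unwinding the definitions and tracking how the quadratic $p_L$ transforms under the conjugation action of $\SLtc$ on $\sltc$.

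\textbf{Setup for (a).} First I would recall the classical identification of $\sltc$ with the space of binary quadratic forms (equivalently, the degree-2 part of $\C[\zeta]$, or sections of $\mathcal{O}(2)$ on $\CP^1$). Concretely, writing $L=\begin{pmatrix} a & b \\ c & -a\end{pmatrix}$, the assignment $L\mapsto p_L(\zeta)=c\zeta^2-2a\zeta-b$ is a linear isomorphism $\sltc\xrightarrow{\sim}\{$binary quadratics in $\zeta\}$ of $3$-dimensional complex vector spaces (the coefficients $c,-2a,-b$ are independent linear functionals on $\sltc$). Projectivizing gives a bijection $\P(\sltc)\xrightarrow{\sim}\P(\text{binary quadratics})$, and a nonzero binary quadratic is determined up to scale by its unordered pair of roots in $\CP^1$ (a root at $\infty$ corresponding to $c=0$), yielding the bijection $\P(\sltc)\to S^2(\CP^1)$. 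The only point needing a word of care is that a quadratic with $c=0$ but $a\neq 0$ has one finite root $\zeta=-b/(2a)$ and one root at $\infty$, and the zero polynomial does not occur since $L\neq 0$; I would note that $p_L$ has a double root at $\infty$ iff $a=c=0$ iff $L$ is a multiple of the nilpotent $\begin{pmatrix}0&1\\0&0\end{pmatrix}$.

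\textbf{Equivariance.} The heart of part (a) is checking that conjugation $L\mapsto gLg^{-1}$ for $g\in\SLtc$ corresponds to the M\"obius action on roots. The cleanest route is to use the $\SLtc$-module structure: $\C^2$ carries the standard representation, and I claim the map $L\mapsto p_L$ is, up to the identification $\zeta = Z_1/Z_2$, an isomorphism of $\SLtc$-representations between $\sltc$ (adjoint) and $\mathrm{Sym}^2(\C^2)^*$ (or $\mathrm{Sym}^2(\C^2)$ — whichever sign convention makes it work out). Rather than invoke representation theory, for a self-contained argument I would verify it on generators of $\SLtc$: it suffices to check the correspondence for $g$ diagonal and for $g$ a lower/upper unipotent, since these generate $\SLtc$, and for each such $g$ one computes $gLg^{-1}$ explicitly and checks directly that the roots of $p_{gLg^{-1}}$ are the images under $z\mapsto gz$ (M\"obius) of the roots of $p_L$. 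The diagonal case $g=\diag(t,t^{-1})$ sends $(a,b,c)\mapsto(a,t^2b,t^{-2}c)$, hence $p_L(\zeta)\mapsto t^{-2}c\zeta^2-2a\zeta-t^2 b$, whose roots are $t^2$ times those of $p_L$, matching $\zeta\mapsto t^2\zeta$; the unipotent cases are a similarly short computation. I would present one case in detail and state the others are analogous.

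\textbf{Part (b).} This is immediate from the formula: complex conjugating the entries of $L$ conjugates the coefficients $(a,b,c)$ of $p_L$, hence $p_{\overline L}(\zeta)=\bar c\,\zeta^2-2\bar a\,\zeta-\bar b = \overline{p_L(\bar\zeta)}$, so the roots of $p_{\overline L}$ are exactly $\{\bar\zeta_1,\bar\zeta_2\}$. (One should observe $\infty$ is fixed by conjugation, consistent with the $c=0$ case.)

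\textbf{Expected main obstacle.} Nothing here is deep; the only genuine care needed is bookkeeping around the point at infinity (the $c=0$ locus), making sure the bijection and the M\"obius-equivariance statements remain correct there, and fixing a sign/duality convention for the $\SLtc$-action on $\mathrm{Sym}^2$ so that the roots transform by $g$ rather than $g^{-\top}$. If I wanted to avoid even the generator computation, I could instead argue abstractly that $\P(\sltc)$ and $S^2(\CP^1)$ are both homogeneous spaces for $\PSLtc$ under the respective actions and the map is a nonconstant $\PSLtc$-equivariant morphism, but the direct computation is more transparent and I would favor it.
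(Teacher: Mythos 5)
Your proposal is correct: the bijection, the handling of the root at infinity, the diagonal-generator computation (which does give $(a,b,c)\mapsto(a,t^2b,t^{-2}c)$ and roots scaling by $t^2$), and the identity $p_{\overline L}(\zeta)=\overline{p_L(\bar\zeta)}$ for part (b) are all right. The route differs from the paper's in emphasis. The paper dismisses the direct verification in one line (``can be easily checked by direct computation'') and instead devotes its proof to the representation-theoretic argument you relegate to an aside: it builds an explicit $\SLtc$-equivariant isomorphism $\sltc\simeq S^2(\U^*)\simeq H_2$ by composing $\End(\U)\simeq\U\otimes\U^*\simeq\U^*\otimes\U^*$, where the second map uses the invariant area form $\omega=z_1\wedge z_2$ to identify $\U$ with $\U^*$. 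Tracing $L$ through these maps \emph{produces} the formula $q_L=c z_1^2-2a\,z_1z_2-b z_2^2$, so equivariance holds for all $g$ simultaneously and the otherwise unmotivated coefficients of $p_L$ are explained. Your generator-based verification is more elementary and self-contained, and is a perfectly rigorous way to organize the ``direct computation''; what it does not buy you is the conceptual origin of $p_L$, which you must take as given. Either proof is acceptable; if you carry out the unipotent cases (or the full rep-theoretic identification) with the same care as the diagonal case, there is no gap.
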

\begin{proof}The map $[L]\mapsto  \{\bar\zeta_1, \bar\zeta_2\}$  is clearly a bijection (a polynomial is determined, up to a scalar multiple, by its roots). The $\SLtc$-equivariance, as well as item (b),  can be easily checked by direct computation.

Here is a more illuminating  argument, explaining also  the origin of the formula  for $p_L$ in equation \eqref{eq:p}. We first show that the adjoint  representation of $\SLtc$ on $\sltc$ is isomorphic to $H_2$, the space of quadratic forms on $\C^2$, or complex homogeneous polynomials $q(z_1, z_2)$ of degree 2 in two variables, with $g\in \SLtc$ acting by substitutions, $q\mapsto q\circ g^{-1}$. To derive an explicit isomorphism,  
let $\U$ be the standard  representation of $\SLtc$ on $\C^2$ and $\U^*$ the dual representation, where $g\in\SLtc$ acts on $\alpha\in \U^*$ by $\alpha\mapsto \alpha\circ g^{-1}$. The induced action on $\Lambda^2(\U^*)$ (skew symmetric bilinear forms on $\U$) is trivial (this amounts  to $\det(g)=1$). Let us fix $\omega:=z_1\wedge z_2\in\Lambda^2(\U^*)$. Since $\omega$ is  $\SLtc$-invariant,  it defines an 
$\SLtc$-equivariant isomorphism $\U\to \U^*$, $u\mapsto \omega(\cdot, u),$ mapping 
$\e_1\mapsto -z_2,$ $\e_2\mapsto z_1$, where $\e_1,\e_2$ is the standard basis of $\U$, dual to $z_1, z_2\in \U^*$. 
We thus obtain an   isomorphism of $\SLtc$ representations, $\End(\U)\simeq
\U\otimes \U^*\simeq \U^*\otimes \U^*$. Under this isomorphism,  $\sltc\subset\End(\U)$ is mapped unto $S^2(\U^*)\subset \U^*\otimes \U^*$ (symmetric bilinear forms on $\U$), which in turn is identified with $H_2$, $\SLtc$-equivariantly,  via $B\mapsto q$, $q(u)=B(u,u).$ Following through these isomorphisms, we get the sought for  $\SLtc$-equivariant isomorphism $\sltc\iso H_2$, 
\begin{align*}
L=\left(
\begin{array}{rr}
a &b\\
c&-a
\end{array}\right) &\mapsto
a \e_1\otimes z_1+b \e_1\otimes z_2+c \e_2\otimes z_1-a \e_2\otimes z_2\\
&\mapsto 
-a z_2\otimes z_1-b z_2\otimes z_2+c z_1\otimes z_1-a z_1\otimes z_2\\
&\mapsto
q_L(z_1,z_2)=c(z_1)^2-2a\,z_1z_2-b(z_2)^2.
\end{align*}
Now every non-zero  quadratic form $q\in H_2$ can be factored as the product of two non-zero linear forms, $q=\alpha_1\alpha_2$, where the kernel of each $\alpha_i$ determines a `root' $\zeta_i\in\CP^1$. Introducing the inhomogeneous coordinate $\zeta=z_1/z_2$ on $\CP^1=\C\cup\infty$, we get $c(z_1)^2-2a\,z_1z_2-b(z_2)^2=(z_2)^2p_L(\zeta)$, with  $p_L$  as in equation \eqref{eq:p} with roots $\zeta_i\in \C\cup\infty.$  
\end{proof}

\begin{rmrk}\label{rmrk:proj} There is a simple projective  geometric interpretation of Lemma \ref{lemma:equiv}. See Figure \ref{fig:proj}(a). 
Consider in the projective plane $\Psltc\simeq\CP^2$ the conic $\CC:=\{[L]\st \det(L)=0\}\simeq \CP^1.$ 
Through a point $[L]\in\CP^2\setminus\CC$ pass two (projective) lines tangent to $\CC$, with tangency points $\zeta_1,\zeta_2\in\CC$ (if  $[L]\in\CC$ then $\zeta_1=\zeta_2=[L]$). Since $\SLtc$ acts on $\CP^2$ by projective transformations preserving $\CC$, the map $[L]\mapsto \{\zeta_1, \zeta_2\}$ is  $\SLtc$-equivariant.   The map $[L]\mapsto [\overline L]$ is the reflection about $\RPt\subset\CP^2.$  Formula \eqref{eq:p}  is a coordinate expression of this geometric recipe. 
\end{rmrk}

\begin{figure}[h!]
\centerline{\includegraphics[width=\textwidth]{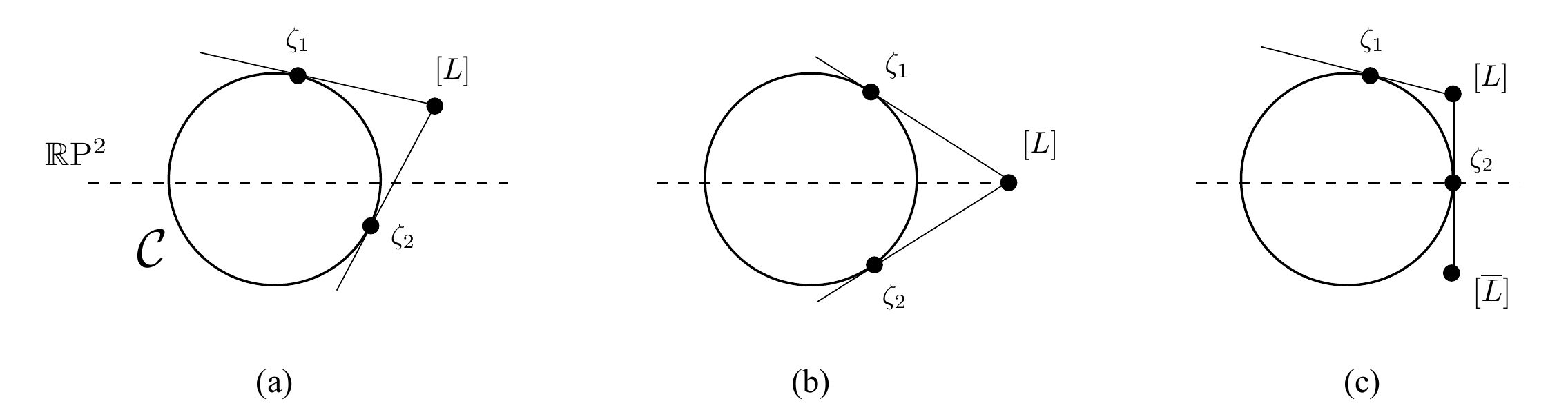}}
\caption{Distinct types of $[L]\in\Pgc$ for $G=\SLt$: (a)  regular ; (b) real  ; (c)  non-real degenerate. See the proofs of Lemma \ref{lemma:equiv}, \ref{lemma:reg} and   Remark \ref{rmrk:proj}. 
}\label{fig:proj}
\end{figure}

\begin{lemma}\label{lemma:reg} Let $L\in\sltc$, $L\neq 0$. Then $[L]\in\Psltcr$ if and only if both roots of $p_L$ are non-real and are non-conjugate, i.e.  $\zeta_1, \zeta_2\in\C\setminus\R$ and $\zeta_1\neq \bar\zeta_2$. 
 \end{lemma}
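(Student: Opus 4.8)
\emph{Proof plan.}
The strategy is to read both defects of Definition~\ref{def:reg} — ``$[L]$ is real'' and ``$[L]$ is degenerate'' — off the unordered pair $\{\zeta_1,\zeta_2\}$ of roots of $p_L$, using the $\SLtc$-equivariant identification of Lemma~\ref{lemma:equiv}. For the real alternative: by Lemma~\ref{lemma:equiv}(b), $[L]=[\overline L]$ iff $\{\zeta_1,\zeta_2\}=\{\bar\zeta_1,\bar\zeta_2\}$ as unordered pairs, and an elementary check shows this occurs exactly when the roots are either both real or mutually conjugate ($\zeta_2=\bar\zeta_1$); equivalently, $[L]$ is non-real iff at least one root is non-real \emph{and} $\zeta_1\neq\bar\zeta_2$.

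The substantive ingredient is a description of the $2$-dimensional Lie subalgebras of $\sltc$: I claim they are exactly the subspaces $V_\zeta:=\{L'\in\sltc:p_{L'}(\zeta)=0\}$ for $\zeta\in\CP^1$. Indeed $V_\infty=\{c=0\}$ is the Borel subalgebra of upper-triangular traceless matrices; since $\zeta\mapsto V_\zeta$ is $\SLtc$-equivariant (immediate from Lemma~\ref{lemma:equiv}(a)) and $\SLtc$ acts transitively on $\CP^1$, every $V_\zeta$ is an $\Ad$-conjugate of $V_\infty$, hence a subalgebra. Conversely, a $2$-dimensional subalgebra $W$ cannot consist entirely of nilpotent elements: these form the cone $\{\det=0\}$, and the nondegenerate quadratic form $\det$ on the $3$-dimensional space $\sltc$ admits no $2$-dimensional isotropic subspace. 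So $W$ contains a semisimple element, conjugate to $H=\diag(1,-1)$; then $W$ is $\ad(H)$-invariant and contains $H$, which forces $W=\C H\oplus\C E$ or $W=\C H\oplus\C F$ (with $E,F$ the standard nilpotent generators), i.e.\ $W=V_\infty$ or $W=V_0$. (In the language of Remark~\ref{rmrk:proj}, the $V_\zeta$ are exactly the lines in $\CP^2$ tangent to the conic $\CC$, touching it at the point $\zeta$.)

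With this in hand the rest is bookkeeping. The roots of $p_{\overline L}$ are $\bar\zeta_1,\bar\zeta_2$, so $L,\overline L\in V_\zeta$ iff $\zeta\in\{\zeta_1,\zeta_2\}\cap\{\bar\zeta_1,\bar\zeta_2\}$; and when $[L]$ is non-real the space $W=\Span\{L,\overline L\}$ is $2$-dimensional, so $W$ is a subalgebra iff $W=V_\zeta$ for some $\zeta$ iff that intersection is nonempty. Hence $[L]$ is regular iff it is non-real and $\{\zeta_1,\zeta_2\}\cap\{\bar\zeta_1,\bar\zeta_2\}=\emptyset$. Writing out the four inequalities $\zeta_i\neq\bar\zeta_j$ shows the empty-intersection condition already forces $\zeta_1,\zeta_2\in\C\setminus\R$ and $\zeta_1\neq\bar\zeta_2$ (which in turn implies non-realness), so regularity is equivalent precisely to this criterion, as claimed.

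The only genuinely nontrivial point is the classification of $2$-dimensional subalgebras of $\sltc$ as the Borels $V_\zeta$; once that is established, the statement follows by elementary manipulation of the root pair under complex conjugation.
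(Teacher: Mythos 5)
Your proof is correct, and it is at heart the same idea as the paper's: the paper also reduces everything to the reality criterion of Lemma~\ref{lemma:equiv}(b) plus the identification of the $2$-dimensional subalgebras of $\sltc$ with the tangent lines to the conic $\CC$ (your $V_\zeta$, touching $\CC$ at $\zeta$), and then reads degeneracy off the roots. The differences are in execution. The paper argues synthetically via the projective picture of Figure~\ref{fig:proj}(c): it \emph{asserts} that all $2$-dimensional subalgebras are conjugate to the upper-triangular Borel, and then explicitly proves only one implication (if the line through $[L],[\overline L]$ is tangent to $\CC$, the conjugation-invariance of that line forces the tangency point to be a real root), leaving the converse to the figure. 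You instead give a self-contained algebraic classification of the $2$-dimensional subalgebras --- ruling out totally nilpotent planes via the nondegeneracy of $\det$ and then using $\ad(H)$-eigenspace decomposition --- and your ``common root of $p_L$ and $p_{\overline L}$'' criterion delivers both directions of the degeneracy equivalence in one stroke, since $\Span\{L,\overline L\}=V_\zeta$ exactly when $\zeta\in\{\zeta_1,\zeta_2\}\cap\{\bar\zeta_1,\bar\zeta_2\}$. What your route buys is completeness (the subalgebra classification is proved rather than quoted, and the four inequalities $\zeta_i\neq\bar\zeta_j$ make the final equivalence airtight); what the paper's route buys is brevity and the geometric picture that is reused elsewhere (Remark~\ref{rmrk:proj}). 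Both are valid.
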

 
 \begin{proof}
Let $\zeta_1, \zeta_2$ be the roots of $p_L$. 
By Lemma \ref{lemma:equiv} part (b), $[L]$ is real,  $[L]= [\overline L]$, if and only if $\zeta_1, \zeta_2$ are  both real or $\zeta_1=\bar\zeta_2$. 
We claim that if $[L]\neq [\overline L]$ then $[L]$ is degenerate, i.e.  $L,\overline L$ span a 2-dimensional subalgebra of $\sltc$, exactly when  one of the two roots $\zeta_1, \zeta_2$ is real and the other is non-real. This is perhaps best seen with Figure \ref{fig:proj}(c). A 2-dimensional subspace of $\sltc$ corresponds to a projective line in $\Psltc$. The 2-dimensional subalgebras of $\sltc$ are all conjugate (by $\SLtc$) to the subalgebra of upper triangular matrices and are represented in  Figure \ref{fig:proj} by lines tangent to $\CC$. Now the line passing through $[L], [\overline L]$ is invariant under complex conjugation, hence  if it is tangent to to $\CC$ then the tangency point is real and is one of the roots of $p_L$. But $[L]$ is non-real, hence the other root is non-real.  
\end{proof}

Next we describe $\Aut(\SLt)$. Clearly, $\GL_2(\R)$ acts on $\SLt$ by matrix conjugation as group automorphism. The ineffective kernel of this action  is the center  $\R^*\II$ of $\GLt$ (non-zero multiples of the identity matrix). The quotient group is denoted by $\PGL_2(\R)=\GLt/\R^*\II.$ Thus there is a natural inclusion $\PGL_2(\R)\subset \Aut(\SLt)$.   

\begin{lemma}$\PGLt= \Aut(\SLt)=\Aut(\slt)$.   
\end{lemma}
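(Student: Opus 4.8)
The plan is to prove the single inclusion $\Aut(\slt)\subseteq\PGLt$: once this is known, combining it with the inclusions $\PGLt\subseteq\Aut(\SLt)\subseteq\Aut(\slt)$ noted just above (the first from the conjugation action of $\GL_2(\R)$, the second because $\SLt$ is connected) forces all three groups to coincide. So I fix $\varphi\in\Aut(\slt)$ and aim to show $\varphi$ equals conjugation by some $g\in\GL_2(\R)$. The method is to \emph{normalize} $\varphi$ by composing it with elements of $\PGLt$ (i.e.\ with conjugations by elements of $\GL_2(\R)$) until it becomes the identity. I work with a standard $\sl_2$-triple $H=\diag(1,-1)$, $E$, $F$, for which $[H,E]=2E$, $[H,F]=-2F$, $[E,F]=H$.

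First, $\varphi$ permutes the Cartan subalgebras of $\slt$ and preserves the split/compact dichotomy among them: a Cartan subalgebra is a line $\R X$ with $X$ regular semisimple, split or compact according as the nonzero eigenvalues of $\ad_X$ are real or purely imaginary, and $\varphi\,\ad_X\,\varphi^{-1}=\ad_{\varphi(X)}$ has the same eigenvalues as $\ad_X$. Since $\GL_2(\R)$-conjugation acts transitively on the split Cartan subalgebras of $\slt$ (each is spanned by a real-diagonalizable matrix, hence determined by its unordered pair of eigenlines, and $\GL_2(\R)$ is $2$-transitive on lines in $\R^2$), after composing $\varphi$ with such a conjugation I may assume $\varphi(\R H)=\R H$. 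Then $\varphi(H)=tH$, and comparing the eigenvalue set $\{0,\pm 2\}$ of $\ad_H$ with that of $\ad_{tH}$ forces $t=\pm 1$; composing if necessary with the conjugation by $\left(\begin{smallmatrix}0&1\\1&0\end{smallmatrix}\right)\in\GL_2(\R)$, which sends $H\mapsto -H$, I may assume $\varphi(H)=H$.

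Now $\varphi$ commutes with $\ad_H$, hence preserves its eigenspace decomposition $\slt=\R F\oplus\R H\oplus\R E$, so $\varphi(E)=\alpha E$, $\varphi(F)=\beta F$, and $\varphi([E,F])=\varphi(H)=H$ gives $\alpha\beta=1$. The conjugations by $\diag(s,s^{-1})$, $s\in\R^{*}$, fix $H$ and send $E\mapsto s^{2}E$, while the conjugation by $\diag(1,-1)$ fixes $H$ and sends $E\mapsto -E$; all of these lie in $\PGLt$, and together they realize $E\mapsto\alpha E$ for every $\alpha\neq 0$. Composing $\varphi$ with the appropriate one, I arrive at $\varphi=\mathrm{id}$ on $\slt$ (the relations $\alpha\beta=1$ and $\varphi(H)=H$ make the remaining generators automatic). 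Unwinding the normalizations, the original $\varphi$ is a product of elements of $\PGLt$, hence lies in $\PGLt$, as required.

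The argument is essentially bookkeeping with the bracket relations; the only genuinely structural ingredient is the observation in the second paragraph that an automorphism cannot carry a split Cartan subalgebra to a compact one — equivalently, that there are no ``exotic'' automorphisms of $\slt$ beyond the ones visible through $\GL_2(\R)$ — so that the first normalization step is possible. A slicker but less self-contained alternative avoids even this by passing to the complexification: $\Aut(\sltc)=\PGL_2(\C)$ because the type-$A_1$ Dynkin diagram has no symmetry, so every automorphism of the complex form is inner, and $\Aut(\slt)$ is the centralizer in $\Aut(\sltc)$ of the conjugation $\sigma$ cutting out the real form; writing such an automorphism as $[\Ad_A]$ with $A\in\GL_2(\C)$, the condition $\Ad_A\sigma=\sigma\Ad_A$ says $\bar A^{-1}A$ is central in $\GL_2(\C)$, i.e.\ $A\in\C^{*}\cdot\GL_2(\R)$, whence $\Aut(\slt)=\C^{*}\GL_2(\R)/\C^{*}=\PGLt$.
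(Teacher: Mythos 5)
Your proof is correct, but it takes a genuinely different route from the paper's. The paper argues structurally and globally: the Killing form of $\slt$ (proportional to $-\det$) has signature $(2,1)$ and the triple product $(X,Y,Z)\mapsto \langle X,[Y,Z]\rangle$ gives an invariant volume form, both built from the bracket alone, so $\Aut(\slt)\subset\SO_{2,1}$; then $\PGLt\subset\SO_{2,1}$ are both $3$-dimensional with two connected components, hence equal. Your argument instead normalizes an arbitrary automorphism against the root-space decomposition relative to a split Cartan subalgebra, using transitivity of $\GL_2(\R)$ on split Cartans and the torus $\diag(s,s^{-1})$ to reduce to the identity. Your version is more elementary and self-contained -- it is pure bookkeeping with the bracket relations and does not require knowing the component structure of $\SO_{2,1}$, which is the one fact the paper's two-line count quietly leans on -- at the cost of being longer and specific to $\slt$; the paper's Killing-form argument is shorter and is reused almost verbatim for $\sut$ in the next section, which is presumably why the authors chose it. Your complexification alternative ($\Aut(\sltc)=\PGL_2(\C)$ since $A_1$ has no diagram symmetry, then descend via the real structure) is also correct and is the standard ``high-tech'' route.
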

\begin{proof} We have already seen the  inclusions  $\PGLt\subset\Aut(\SLt)\subset \Aut(\slt)$, so it is enough to show that $\Aut(\slt)\subset\PGLt.$ Now the Killing form of a Lie algebra, 
$\<X,Y\>=\tr(\ad X\circ\ad Y)$,  is defined  in terms of the Lie bracket alone. For $\slt$, the associated quadratic form is $\det(X)=-a^2-bc$ (up to a constant), a non-degenerate quadratic form of signature (2,1).  Furthermore, the   `triple product' $(X,Y,Z)\mapsto \< X, [Y,Z]\>$ defines a non vanishing volume form on $\slt$ in terms of the Lie bracket, hence $\Aut(\slt)\subset\SO_{2,1}.$ Finally,  $\PGLt\subset \SO_{2,1}$ and both are 3-dimensional groups with two components, so  they must coincide. \end{proof}

Let us now examine the action of $\Aut(\SLt)$ on $\P(\sltc).$ It is convenient, instead of working with $\Aut(\SLt)=\PGL_2(\R)$, to work with its double cover $\SL^\pm_2(\R)$ (matrices with $\det=\pm 1.$) The latter consists of two components, the identity component, $\SLt$, and $\sigma\SLt$, where $\sigma$ is any matrix with $\det=-1$; for example $\sigma=\diag(1,-1)$. 
According to Lemma \ref{lemma:equiv}, we need to consider first the action of $\SL^\pm_2(\R)$ by M\"obius transformations on $\CP^1$. The action of the identity component $\SLt$  has 3 orbits; in terms of the inhomogeneous coordinate $\zeta$,  these are 
\begin{itemize}
\item  the upper half-plane $\Im(\zeta) > 0,$ 
\item  the lower half-plane $\Im(\zeta)< 0,$ 
\item  their common boundary, the real projective line $\RP^1 =\R\cup\infty$. 
\end{itemize}
The action on each half-plane is by orientation preserving hyperbolic isometries (isometries of the Poincar\'e metric $|\d \zeta|/|\Im(\zeta)|$). The action of $\sigma=\diag(1,-1)$ is by reflection about the origin $\zeta=0$, an orientation preserving hyperbolic isometry between the upper and lower half planes.

In summary, we get the  following orbit structure:

\begin{prop}\label{prop:p} Under the identification $\Psltc\simeq S^2(\CP^1)$ of Lemma \ref{lemma:equiv}, the  orbits of $\Aut(\SLt)$  in $\Psltcr$  correspond to the following two 1-parameter families of orbits in $S^2(\CP^1)$:
\begin{enumerate}[leftmargin=18pt,label=I.]\setlength\itemsep{5pt}
\item A 1-parameter family of orbits, corresponding to a pair of points $\zeta_1, \zeta_2\in\C\setminus\R$ in the same  half-plane  (upper or lower). The parameter can be taken as the hyperbolic distance $d(\zeta_1, \zeta_2)\in[0,\infty)$. All these orbits are 3-dimensional, except the one corresponding to a double point $\zeta_1=\zeta_2$, which is 2-dimensional.

\item A 1-parameter family of orbits, corresponding to pair of points   $\zeta_1, \zeta_2\in\C\setminus\R$ situated  in opposite half planes and which are not complex conjugate, $\zeta_1\neq \bar \zeta_2.$ The parameter can be taken as the hyperbolic distance $d(\zeta_1, \bar \zeta_2)\in(0,\infty)$. All these orbits are 3-dimensional. \end{enumerate}

The rest of the orbits are either real ($\zeta_1, \zeta_2\in \RP^1=\R\cup \infty$ or $\zeta_1= \bar \zeta_2$) or degenerate (one of the points is real). 
\end{prop}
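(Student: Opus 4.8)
\section*{Proof proposal}

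The plan is to combine the $\SLtc$-equivariant bijection $\Psltc\simeq S^2(\CP^1)$ of Lemma~\ref{lemma:equiv} with the identification $\Aut(\SLt)=\PGLt$ and a little plane hyperbolic geometry. By Lemma~\ref{lemma:reg}, the regular locus $\Psltcr$ corresponds exactly to the unordered pairs $\{\zeta_1,\zeta_2\}$ with $\zeta_1,\zeta_2\in\C\setminus\R$ and $\zeta_1\neq\bar\zeta_2$; and by Lemma~\ref{lemma:equiv}(b) together with the argument in the proof of Lemma~\ref{lemma:reg}, the non-regular pairs are precisely the real ones ($\zeta_1,\zeta_2\in\RP^1$, or $\zeta_1=\bar\zeta_2$) and the degenerate ones (exactly one of $\zeta_1,\zeta_2$ real). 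This already yields the final sentence of the proposition. For the rest, I would split the regular locus according to whether $\zeta_1$ and $\zeta_2$ lie in the same open half-plane ($\Im(\zeta)>0$ or $\Im(\zeta)<0$) or in opposite half-planes. Each of these conditions is preserved by the identity component $\SLt$ (which preserves each half-plane) and by $\sigma=\diag(1,-1)$ (which interchanges them), hence is $\Aut(\SLt)$-invariant; so the two alternatives describe disjoint unions of orbits, and it suffices to treat them separately.

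For Family~I, after applying $\sigma$ if needed I may assume both points lie in the upper half-plane, on which $\SLt$ acts (through $\PSLt$) as the full group of orientation-preserving hyperbolic isometries. This group acts transitively on ordered pairs of points at a fixed hyperbolic distance (move one point onto the other by an isometry, then rotate about it), so the $\SLt$-orbits of pairs in the upper half-plane are labelled by $d(\zeta_1,\zeta_2)\in[0,\infty)$; the residual effect of $\sigma$ merely identifies such an orbit with the corresponding one in the lower half-plane at the same distance, so the $\Aut(\SLt)$-orbits of Family~I are still labelled by $d(\zeta_1,\zeta_2)$. The dimension count comes from computing stabilizers in $\PSLt$: a single point of the upper half-plane has stabilizer $\SO_2$, so the double-point orbit ($\zeta_1=\zeta_2$) is $2$-dimensional; an orientation-preserving isometry fixing two distinct points fixes the geodesic through them, hence is the identity, so the stabilizer of a pair of distinct points is finite and the orbit is $3$-dimensional.

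For Family~II I would use that a real matrix $g$ satisfies $g\cdot\bar\zeta=\overline{g\cdot\zeta}$, so complex conjugation conjugates the $\SLt$-action on the lower half-plane to its action on the upper half-plane. Hence an unordered pair with one point $\zeta_1$ in the upper half-plane and one point $\zeta_2$ in the lower one is the same datum as the ordered pair $(\zeta_1,\bar\zeta_2)$ of distinct points of the upper half-plane (distinct since $\zeta_1\neq\bar\zeta_2$), and by the transitivity used above its $\SLt$-orbit is labelled by $d(\zeta_1,\bar\zeta_2)\in(0,\infty)$. Applying the orientation-reversing isometry $w\mapsto -\bar w$ one checks that $\sigma$ sends this datum to another with the same value of $d(\zeta_1,\bar\zeta_2)$, so the $\Aut(\SLt)$-orbits of Family~II are labelled by that distance; and the same geodesic-fixing argument shows every such orbit is $3$-dimensional.

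The main obstacle is bookkeeping rather than substance: because we work with unordered pairs and because both complex conjugation and the outer automorphism $\sigma$ intervene, one must verify carefully that the proposed label ($d(\zeta_1,\zeta_2)$ in Family~I, $d(\zeta_1,\bar\zeta_2)$ in Family~II) is genuinely constant on each $\Aut(\SLt)$-orbit and genuinely separates distinct orbits, and that no orbit of Family~I coincides with one of Family~II. The last point is handled by the $\Aut(\SLt)$-invariance of the same-versus-opposite-half-plane dichotomy noted above, and the first by the hyperbolic-distance computations.
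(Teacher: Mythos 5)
Your proposal is correct and follows essentially the same route as the paper: it combines the equivariant bijection of Lemma \ref{lemma:equiv}, the regularity criterion of Lemma \ref{lemma:reg}, and the description of the $\Aut(\SLt)=\PGLt$ action on $\CP^1$ by hyperbolic isometries, then gets the orbit dimensions from a stabilizer computation (circle stabilizer for a double point, finite stabilizer for a distinct pair). The paper's own proof is just a terser version of this, omitting the transitivity and bookkeeping details you spell out.
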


\begin{proof} Most of the claims follow immediately from the previous lemmas so their proof is omitted. The claimed dimensions of the orbits follow from the dimension of the stabilizer in $\Aut(\SLt)$ of an unordered pair  $\zeta_1, \zeta_2\in \C\setminus\R$; for two distinct points in the same half-plane, or in opposite half-planes with $z_1\neq \bar z_2$ , the stabilizer is the two element subgroup interchanging the points. For a double point the stabilizer is a  circle group of hyperbolic rotations about this point. \end{proof}

Next, recall that the {\em Killing form}  on $\slt$ is the bilinear form $\<X,Y\>=(1/2)\tr(XY).$
The associated quadratic form $\<X, X\>=-\det(X)=a^2+bc$  is a non-degenerate indefinite form of signature $(2,1)$, the unique $\Ad$-invariant form on $\slt$, up to scalar multiple. The {\em null cone} $C\subset \slt$ is the subset of elements with  $\<X, X\>=0.$ 

\begin{defn} A 2-dimensional subspace $\Pi\subset\slt$  is called {\em elliptic} (respectively,{\em hyperbolic}) if the Killing form restricts to a definite (respectively, indefinite, but non-degenerate) inner product on $\Pi$. Equivalently, $\Pi$ is hyperbolic  if its  intersection with  the null cone $C$ consists of two of its generators and elliptic if it intersects it only at its vertex $X=0$. A left-invariant CR structure $(D,J)$ on $\SLt$ is elliptic  (resp.~{\em hyperbolic}) if $D_e\subset \slt$ is elliptic  (resp.~{\em hyperbolic}). 
\end{defn}

\begin{rmrk} There is a third type of a 2-dimensional subspace $\Pi\subset\slt$,  called {\em parabolic}, consisting of 2-planes tangent to $C$, but these are subalgebras of $\slt$, hence are excluded by the non-degeneracy condition on the CR structure.
\end{rmrk}

\begin{rmrk}
Our use of the terms elliptic and hyperbolic for the contact plane is natural from the point of view of Lie theory.  However it conflicts with the terminology of analysis; CR vector fields are never elliptic or hyperbolic differential operators.
\end{rmrk}

\begin{lemma}\label{lemma:elip}
Let $[L]\in\Psltcr$, and $D_e\subset \slt$ the real part of the span of $L, \overline L$. Then $D_e$ is elliptic if the  roots of $p_L$ lie in the same half plane (type I of Proposition \ref{prop:p}), and is hyperbolic if they lie in opposite half planes  (type II of proposition \ref{prop:p}). 
\end{lemma}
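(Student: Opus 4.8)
The claim relates two things: the location of the roots $\zeta_1,\zeta_2$ of $p_L$ (same half-plane versus opposite half-planes), and the signature of the Killing form restricted to $D_e=\mathrm{Re}\,\mathrm{Span}_\C\{L,\overline L\}$. The plan is to reduce to a normal form under the action of $\Aut(\SLt)$, using the earlier structural results, and then compute directly.

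First I would use Proposition \ref{prop:p} (and Lemma \ref{lemma:equiv}): since both sides of the asserted equivalence are $\Aut(\SLt)$-invariant — the root configuration visibly so, and the type of $D_e$ because $\Aut(\SLt)=\PGLt\subset\SO_{2,1}$ acts by isometries of the Killing form, hence preserves the signature of its restriction to any plane — it suffices to check one representative $[L]$ in each orbit. For the elliptic family (type I) I can take $\zeta_1=re^{i\alpha}$, $\zeta_2=re^{-i\alpha}$ or, more simply, a convenient representative such as $\zeta_1=i\lambda$, $\zeta_2=i/\lambda$ with $\lambda>0$ (both in the upper half-plane); for the hyperbolic family (type II) I take $\zeta_1$ in the upper half-plane and $\zeta_2$ in the lower, e.g. $\zeta_1=i\lambda$, $\zeta_2=-i\mu$ with $\lambda,\mu>0$, $\lambda\neq\mu$. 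From the roots I recover $L$ via \eqref{eq:p} (up to scalar): $p_L(\zeta)=c(\zeta-\zeta_1)(\zeta-\zeta_2)$, so that $a=\tfrac{c}{2}(\zeta_1+\zeta_2)$, $b=-c\zeta_1\zeta_2$, and I am free to choose $c$, say $c=1$. Then $L=L_1+iL_2$ with $L_1,L_2\in\slt$, and $D_e=\mathrm{Span}_\R\{L_1,L_2\}$.

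The second step is the computation of the Killing form on $D_e$. With $\langle X,X\rangle=a^2+bc$ for $X=\begin{pmatrix}a&b\\c&-a\end{pmatrix}$, and the polarization $\langle X,Y\rangle=aa'+\tfrac12(bc'+b'c)$, I evaluate the $2\times2$ Gram matrix $\bigl(\langle L_i,L_j\rangle\bigr)_{i,j}$. A cleaner route: note $\langle L,L\rangle = \langle L_1,L_1\rangle-\langle L_2,L_2\rangle + 2i\langle L_1,L_2\rangle$ and $\langle L,\overline L\rangle=\langle L_1,L_1\rangle+\langle L_2,L_2\rangle$ are both computable directly from $(a,b,c)$. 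Since $\langle L,L\rangle = a^2+bc$ and $\langle L,\overline L\rangle = |a|^2 + \tfrac12(b\bar c + \bar b c)$, and in terms of roots $a^2+bc = c^2\bigl(\tfrac14(\zeta_1+\zeta_2)^2 - \zeta_1\zeta_2\bigr)=\tfrac{c^2}{4}(\zeta_1-\zeta_2)^2$, the quantity $\langle L,L\rangle$ is (a positive multiple of) $(\zeta_1-\zeta_2)^2$. The signature of the form on $D_e$ is governed by the sign of the discriminant of the Gram matrix, equivalently by whether $|\langle L,L\rangle|$ is less than or greater than $\langle L,\overline L\rangle$ (both real, the latter positive since the form is indefinite but $D_e$ is not parabolic). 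Then I recognize the resulting inequality as exactly the statement that $\zeta_1,\zeta_2$ lie in the same half-plane versus opposite ones — this is where the hyperbolic-geometry picture of Proposition \ref{prop:p} makes the dichotomy transparent: reflection $\zeta_2\mapsto\bar\zeta_2$ swaps the two cases, and it corresponds precisely to swapping which of the two comparisons holds.

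The main obstacle I anticipate is purely bookkeeping: keeping the scalar ambiguity in $L$ under control (multiplying $L$ by $e^{i\theta}$ rotates $L_1,L_2$ within $D_e$ but does not change $D_e$ or its signature, so one must phrase the final criterion in a scale-invariant way, e.g. via the sign of $\langle L,\overline L\rangle^2 - |\langle L,L\rangle|^2$, which is manifestly invariant under $L\mapsto \kappa L$ for $\kappa\in\C^\times$). Once that invariant is written down, translating "same half-plane" into an inequality between $|\zeta_1-\zeta_2|$ and the relevant conjugate distance $|\zeta_1-\bar\zeta_2|$ is a one-line hyperbolic-trigonometry identity (or a direct algebraic check using $\mathrm{Im}\,\zeta_1,\mathrm{Im}\,\zeta_2$), and the sign comparison then reads off the elliptic/hyperbolic alternative. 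I would present the normal-form reduction, the Gram-matrix computation for the two representatives, and the sign conclusion, leaving the elementary algebra to the reader.
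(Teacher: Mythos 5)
Your plan is correct and rests on the same core idea as the paper's proof: both sides of the dichotomy are $\Aut(\SLt)$-invariant, so the statement reduces to computing the signature of the Killing form restricted to $D_e$ and matching it against the position of the roots. The paper executes this by normalizing the roots to $\zeta_1=i$, $\zeta_2=it$ and writing out $D_e$ explicitly as the matrices $\left(\begin{smallmatrix} a(1+t) & tb\\ b & -a(1+t)\end{smallmatrix}\right)$ with $\det = -a^2(1+t)^2-tb^2$, which is read off to be definite for $t>0$ and indefinite for $t<0$, $t\neq -1$. Your preferred route via the Gram determinant is a genuine (and arguably cleaner) variant that needs no normalization at all: with $P=\<L_1,L_1\>$, $Q=\<L_2,L_2\>$, $R=\<L_1,L_2\>$ one has $4\det G = 4(PQ-R^2)=\<L,\overline L\>^2-|\<L,L\>|^2$, and with $c=1$, $\<L,L\>=\tfrac14(\zeta_1-\zeta_2)^2$ and $\<L,\overline L\>=\tfrac14|\zeta_1-\bar\zeta_2|^2+\Im(\zeta_1)\Im(\zeta_2)$, whence $\<L,\overline L\>^2-|\<L,L\>|^2=\Im(\zeta_1)\Im(\zeta_2)\,|\zeta_1-\bar\zeta_2|^2$ — manifestly positive exactly when the roots share a half-plane. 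One small inaccuracy: your parenthetical claim that $\<L,\overline L\>=P+Q$ is always positive is false in the hyperbolic case (take $P=1$, $Q=-5$); but this does not matter, since your final scale-invariant criterion is the sign of $\<L,\overline L\>^2-|\<L,L\>|^2=4\det G$, which decides definiteness versus indefiniteness regardless of the sign of $P+Q$ (and in the definite case positivity of $P+Q$ is automatic, the ambient signature being $(2,1)$).
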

\begin{proof} Let $\zeta_1, \zeta_2$ be the roots of $p_L$. Acting by $\Aut(\SLt)$, we can assume, without loss of generality, that $\zeta_1=i$ and $\zeta_2=it$ for some $t\in\R\setminus\{-1,0\}$. Thus, up to scalar multiple,  $p_L=(\zeta-i)(\zeta-it)=\zeta^2-i(1+t)\zeta-t.$ A short calculation shows that  $D_e$ consists of matrices of the form 
$X=\left(\begin{array}{cc}
a(1+t) &tb\\
b& -a(1+t)
\end{array}
\right)$, $a,b\in\R$, with $\det (X)=-a^2(1+t)^2-tb^2.$ This is negative definite for $t>0$ and indefinite otherwise. 
\end{proof}

\begin{prop}\label{prop:slt1} Let $V_t\subset T_\C\SLt$, $t\in\R,$  be the left-invariant complex line bundle spanned at $e\in\SLt$ by  
\be\label{eq:L}
L_t=\left(\begin{array}{cc}
i{1+t\over 2} &t\\
1& -i{1+t\over 2} 
\end{array}
\right)\in\slt\otimes\C=\sl_2(\C).\ee

 Then 
\benum
\item 
$V_t$ is a left-invariant CR structure for all $t\neq 0,-1$, elliptic for $t>0$ and hyperbolic for  $t<0, t\neq -1.$ 

\item $V_t$ is spherical if  $t=1$ or $-3\pm  2\sqrt{2}$ and aspherical otherwise. 

\item Every  left-invariant  CR structure on $\SL_2(\R)$  is CR equivalent to 
 $V_t$ for  a unique $t\in(-1,0)\cup(0,1].$

\item The  aspherical left-invariant CR structures $V_t$, $t\in(-1,1)\setminus\{0,-3+  2\sqrt{2} \}$, are  pairwise non-equivalent, even locally.  

\end{enumerate}
\end{prop}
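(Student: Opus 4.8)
The plan is to verify the four items in sequence, leaning on the structural results already established. For (a), I would substitute $L_t$ directly into equation \eqref{eq:p} to compute $p_{L_t}(\zeta) = \zeta^2 - i(1+t)\zeta - t = (\zeta - i)(\zeta - it)$, so the roots are $\zeta_1 = i$, $\zeta_2 = it$. By Lemma \ref{lemma:reg}, $[L_t]\in\Psltcr$ precisely when both roots are non-real and non-conjugate, i.e.\ $t\neq 0$ (non-real) and $t\neq -1$ (since $\bar\zeta_1 = -i$, we need $it \neq -i$). The ellipticity/hyperbolicity dichotomy is then immediate from Lemma \ref{lemma:elip}: the roots $i$ and $it$ lie in the same half-plane iff $t>0$, giving elliptic, and in opposite half-planes iff $t<0$ (with $t\neq -1$), giving hyperbolic.

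For (b), I would build a well-adapted left-invariant coframe. Starting from $L_t$ as a $(0,1)$-vector, I pick a contact form $\phi$ and the dual $(1,0)$-form, run the normalization recipe described after Definition \ref{def:adap} (which for left-invariant data reduces to a constant rescaling since $L(u)/u$ is constant), and read off the structure constants $a,b,c$ from \eqref{eq:str}. Then Proposition \ref{prop:CRc} says the structure is spherical iff $c(2|a|^2 + 9ib) = 0$. I expect $c\neq 0$ for all regular $t$ (this should follow from non-degeneracy, or can be checked directly), so sphericity is governed by $2|a|^2 + 9ib = 0$. Computing $a$ and $b$ as explicit functions of $t$ and solving this single scalar equation should yield exactly $t = 1$ and $t = -3\pm 2\sqrt{2}$. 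This is the most computational step, but it is a finite, mechanical calculation once the coframe is fixed.

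For (c), I invoke Proposition \ref{prop:p}: every regular left-invariant CR structure on $\SLt$ corresponds to an $\Aut(\SLt)$-orbit in $S^2(\CP^1)$, labeled by the hyperbolic distance $d(\zeta_1,\zeta_2)$ (type I) or $d(\zeta_1,\bar\zeta_2)$ (type II). I need to check that the family $\{[L_t]\}$ hits each orbit exactly once as $t$ ranges over $(-1,0)\cup(0,1]$. Using $\zeta_1 = i$, $\zeta_2 = it$: for $t>0$ the hyperbolic distance in the upper half-plane between $i$ and $it$ is $|\log t|$, which is a strictly decreasing bijection from $(0,1]$ onto $[0,\infty)$ — this covers all of family I. For $t<0$, $\bar\zeta_2 = -it = i|t|$, and $d(i, i|t|) = |\log|t||$; as $t$ ranges over $(-1,0)$, $|t|$ ranges over $(0,1)$, so $|\log|t||$ ranges over $(0,\infty)$, covering all of family II. Strict monotonicity gives uniqueness within each family, and the two families are disjoint (elliptic vs.\ hyperbolic), so $t\in(-1,0)\cup(0,1]$ parametrizes all regular structures bijectively. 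Combined with Theorem \ref{thm:ens} (or just the $\Aut(G)$-equivariance) this gives the CR-equivalence classification.

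Finally, (d) is now essentially a corollary: by (c) the structures $V_t$ for distinct $t\in(-1,1)\setminus\{0\}$ correspond to distinct $\Aut(\SLt)$-orbits, hence are not $\Aut(\SLt)$-equivalent. For the aspherical ones — those with $t\notin\{1, -3\pm 2\sqrt2\}$, and within $(-1,1)$ only $t = -3+2\sqrt2$ is relevant — Theorem \ref{thm:ens} upgrades non-equivalence-via-isomorphism to genuine local (hence global) CR non-equivalence, since aspherical left-invariant structures on a Lie group are locally CR equivalent only if related by a Lie algebra automorphism. The main obstacle I anticipate is purely bookkeeping in step (b): setting up the well-adapted coframe cleanly enough that the quadratic $2|a(t)|^2 + 9i\,b(t) = 0$ factors transparently into the stated roots; a poor choice of initial contact form could make the algebra unpleasant, so I would choose $\phi$ to exploit the Killing-form structure (e.g.\ adapted to the $\slt$-basis diagonalizing the relevant symmetry).
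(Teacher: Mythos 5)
Your overall strategy for (a), (b) and (d) matches the paper's, but there are two problems, one of which is a genuine gap in item (c).

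The gap: for the uniqueness claim in (c) you write that the elliptic and hyperbolic families are disjoint and that ``Theorem \ref{thm:ens} (or just the $\Aut(G)$-equivariance)'' finishes the classification. This does not cover the two \emph{spherical} structures that both lie in your parameter range, namely $V_1$ (elliptic spherical) and $V_{-3+2\sqrt{2}}$ (hyperbolic spherical, with $-3+2\sqrt{2}\in(-1,0)$). Theorem \ref{thm:ens} explicitly requires asphericity, and $\Aut(G)$-inequivalence of the orbits does not by itself rule out a CR equivalence that is not induced by a group automorphism --- indeed Remark \ref{rmrk:counter} shows this failure actually occurs for the spherical elliptic structure, which is CR equivalent to a left-invariant structure on a non-isomorphic group. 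The paper closes this hole with a separate global argument: by Proposition \ref{prop:slt2} the elliptic spherical structure \emph{embeds} in $S^3$ while the hyperbolic spherical one only admits a $2{:}1$ immersion, and Corollary \ref{cor:sph} (any two CR immersions into $S^3$ inducing the same structure differ by an element of $\PUto$) then shows the two cannot be CR equivalent. Some argument of this kind is indispensable; without it the word ``unique'' in (c) is not proved.

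A secondary error in (b): your expectation that $c\neq 0$ for all regular $t$ ``by non-degeneracy'' is false, and the reduction of the sphericity criterion to the single equation $2|a|^2+9ib=0$ would then miss a root. In the paper's well-adapted coframe one gets $a=0$, $b=-i\frac{1+6t+t^2}{4|t|(1+t)}$ and $c=-i\frac{(1-t)^2}{4|t|(1+t)}$, so $c$ vanishes exactly at $t=1$; the full criterion $c\left(2|a|^2+9ib\right)=0$ factors as $(1-t)^2(1+6t+t^2)=0$, with $t=1$ coming from the factor $c$ and $t=-3\pm2\sqrt{2}$ from the other. The method (compute $a,b,c$ and apply Proposition \ref{prop:CRc}) is right, but you must keep both factors.
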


\begin{proof}

(a) The quadratic polynomial corresponding to $L_t$ 
is 
$$p(\zeta)=\zeta^2-i(1+t)\zeta-t=(\zeta-i)(\zeta-it),$$
with roots $i,it$. 
For  $t>0$ the  roots are in the upper half plane and thus, by  Lemma \ref{lemma:elip}, $V_t$ is an elliptic 
   CR structure. For $t<0$ the roots are in opposite half planes and 
   for $t\neq -1$ are not complex conjugate, hence $V_t$ is an hyperbolic  CR structure. 

\sn (b) Let 
$$\Theta=g^{-1}\d g= \left(\begin{array}{lr}
\alpha &\beta\\
\gamma & -\alpha
\end{array}
\right)
$$
be the left-invariant Maurer-Cartan $\sl_2(\R)$-valued 1-form on $\SL_2(\R)$. A coframe adapted to  $V_t$  is  
\be\label{eq:adap}
\theta=\beta-t\gamma, 
\quad \theta_1=\alpha-i{1+t\over 2}\gamma,
\ee
i.e.  $\theta(L_t)=\theta_1(L_t)=0,$ $\bar\theta_1(L_t)\neq 0$.  The Maurer-Cartan equations, $\d\Theta=-\Theta\wedge\Theta$,  are
$$\d\alpha=-\beta\wedge\gamma,\quad \d\beta=-2\alpha\wedge\beta,\quad\d\gamma=2\alpha\wedge\gamma.
$$
Using there equations, we calculate  
$$\d\theta=i{4t\over 1+t}\theta_1\wedge\bar\theta_1+\theta\wedge\theta_1+\theta\wedge\bar\theta_1.
$$
Now 
$$\phi:=\sign(t)(\beta-t\gamma), \quad \phi_1:=\sqrt{\left|{4t\over 1+t}\right|}\left[\alpha-i {1+t\over 4}\left({\beta\over t}+\gamma\right)\right]$$
satisfy
$$\d\phi =i\phi _1\wedge \bar{\phi_1},
\quad \d \phi_1=b\phi\wedge\phi_1+c\phi\wedge\bar\phi_1,$$
where
$$
b=-i{1+6t+t^2\over 4|t|(1+t)}, 
\quad c=-i{(1-t)^2\over 4|t|(1+t)}, 
$$
thus $(\phi, \phi_1)$ is well-adapted to $V_t$. Applying Proposition \ref{prop:CRc}, we conclude that $V_t$  is spherical if and only if $(1+6t+t^2)(1-t)=0;$ that is, $t=1$ or $-3\pm  2\sqrt{2}$, as claimed. 

\sn(c)  The hyperbolic distance $d(i, it)$ varies monotonically from $0$ to $\infty$ as $t$ varies from $1$ to $0$, hence every pair of points in the same  half plane can be mapped by $\Aut(\SLt)$ to the pair  $(i,it)$ for a unique $t\in (0,1]$. Consequently, every left-invariant elliptic CR structure is CR equivalent to  $V_t$ for a unique $t\in(0,1]$. 

 Similarly, $d(i, -it)$ varies monotonically  from $0$ to $\infty$
as $t$ varies from $-1$ to $0$, hence every  hyperbolic  left-invariant CR structure  is CR equivalent to $V_t$ for  a unique  $t\in(-1,0)$. 

By Theorem \ref{thm:ens}, no pair of the aspherical   $V_t$ with $0<|t|<1$ are  CR equivalent, even locally. It remains to show that the elliptic and hyperbolic spherical  structures, namely,  $V_t$ for $t=1$ and $-3+2\sqrt{2}$ (respectively), are not CR equivalent.  In the next proposition, we find an embedding $\phi_1:\SLt\to S^3$ of the elliptic spherical structure in the standard spherical CR structure on $S^3$ and an immersion $\phi_2:\SLt\to S^3$ of the hyperbolic spherical structure which is not an embedding (it is a $2:1$ cover). It follows from  Corollary \ref{cor:sph} that these two spherical structures are not equivalent: if $f:\SLt\to\SLt$ were  a  diffeomorphism  mapping the hyperbolic spherical structure to the elliptic one,  then this  would imply that the pull-backs to $\SLt$  of the spherical structure of $S^3$ by $ \phi_1\circ f$ and $\phi_2$ coincide,  and hence,  by Corollary \ref{cor:sph}, there is an element  $g\in\PUto$ such that  $\phi_2=g\circ \phi_1\circ f$. But this is impossible, since $g\circ \phi_1\circ f$ is an embedding and $\phi_2$ is not.

\sn (d) As mentioned in the previous item, this is a consequence of Theorem \ref{thm:ens}. 
\end{proof}

\begin{rmrk}\label{rmrk:short} There is an alternative path, somewhat shorter (albeit less picturesque),  to the classification of left-invariant 
CR structures on $\SLt$, leading to a family of `normal forms' different then the  $V_t$ of 
Proposition \ref{prop:slt1}. One shows first that, up to conjugation by $\SLt$, there are only  
two non-degenerate left-invariant contact structures  $D\subset T\SLt$: an elliptic one, given by $D_e^+=\{c=b\},$ 
and hyperbolic one, given by $D_e^-=\{c=-b\}.$ The Killing form on $\slt$, $-\det(X)=a^2+bc$, restricted to $D_e^\pm$, is given by $a^2\pm b^2, $ with orthonormal basis $A,B\pm C$, where $A, B, C$ is the basis of $\slt$ dual to $a,b,c$. 
One then determines the stabilizer of  $D_e^\pm$ in  $\Aut(\SLt)$ 
(the subgroup that leaves $D_e^\pm$ invariant).  In each case the stabilizer acts on $D_e^\pm$ as the full isometry group of $a^2\pm b^2,$ that is, $\rm{O}_2$ in the elliptic case,  and $\rm{O}_{1,1},$ in the hyperbolic case. Using this description one shows that, in the elliptic case, each almost complex structure on $D^+_e$ is conjugate to a unique one of the form  $A\mapsto s(B+C)$, $s\geq 1$, with corresponding $(0,1)$ vector    $A+is(B+C)=\left(\begin{array}{cc}
1 &is\\
is&-1 
\end{array}
\right)$, and in the hyperbolic case $A\mapsto s(B-C)$, $s> 0$,  with corresponding $(0,1)$ vector    $A+is(B-C)=\left(\begin{array}{cc}
1 &is\\
-is&-1 
\end{array}
\right)$.   The spherical structures are given by $s=1$ in both cases.  
\end{rmrk}

Regarding realizability of left-invariant CR structures  on $\SLt$,  we have the following.

\begin{prop}\label{prop:slt2}

\benum
\item
The elliptic left-invariant spherical CR structure  on $\SLt$   ($t=1$ in equation \eqref{eq:L}) is realizable as  any of the generic (3-dimensional) $\SLt$-orbits in  $\C^2$ (complexification of the standard linear action on $\R^2$). This is also CR equivalent to the complement of a `chain'  in $S^3\subset \C^2$ (a curve in $S^3$ given by the intersection of a complex affine line in $\C^2$ with $S^3$; e.g. $z_1=0$)

\item The rest of the   left-invariant CR structures on $\SLt$, with $0<|t|<1$ in equation \eqref{eq:L},  are either $4:1$ covers, in the aspherical elliptic case $0<t<1$, or $2:1$ covers, in the hyperbolic case $-1<t<0$, of  the orbits of  $\SLt$ in  $\Psltc$. 

\item The spherical hyperbolic orbit  is also CR equivalent to the complement of $S^3\cap \R^2$ in $S^3\subset\C^2$. 
\end{enumerate}
\end{prop}

\begin{proof}

(a)  Fix $v\in\C^2$ and define $\mu:\SLt\to\C^2$ by $\mu(g)= gv$. If the stabilizer of $v$ in $\SLt$ is trivial and $L_1v=0,$ then, by Lemma  \ref{lemma:real}, $\mu$ is an $\SLt$-equivariant CR embedding. 
Both conditions are satisfied by $v={i\choose 1}.$  In fact,  all  3-dimensional $\SLt$-orbits in $\C^2$ are  homothetic, hence are CR equivalent and any of them will do. 

Now let $\OO\subset\C^2$ be the $\SLt$-orbit of $v={i\choose 1}.$ For $g={a\ \ b\choose c \ \ d}\in \SLt$, with $\det(g)=ad-bc=1$, $gv={b+ia\choose d+ic}$, hence $\OO$ is the quadric $\Im(z_1\bar z_2)=1$, where $z_1, z_2$ are the standard complex coordinates in $\C^2$. To  map $\OO$ onto the complement of $z_1=0$ in $S^3$ we first apply the  complex linear transformation $\C^2\to \C^2$, $(z_1, z_2)\mapsto (z_1+iz_2, z_2+iz_1)/2$, mapping $\OO$ unto the hypersurface $|z_1|^2-|z_2|^2=1$. 
Next  let $Z_1, Z_2, Z_3$  be homogenous coordinates in  $\CP^2$ and embed $\C^2$ as an affine chart,   $(z_1, z_2)\mapsto [z_1: z_2:1].$ The image  of $|z_1|^2-|z_2|^2=1$ is the complement of $Z_3=0$ in $|Z_1|^2-|Z_2|^2=|Z_3|^2.$ This is mapped by  $[Z_1:Z_2:Z_3]\mapsto [Z_3: Z_2: Z_1]$ to the complement of $Z_1=0$ in $|Z_1|^2+|Z_2|^2=|Z_3|^2.$ In our affine chart   this  is the complement of $z_1=0$ in  $|z_1|^2+|z_2|^2=1$, as needed.

\sn (b) By Proposition \ref{prop:real}, to show that the map $\SLt\to\Psltc$, $g\mapsto [\Ad_gL_t]$, is a CR immersion of  $V_t$ into $\Psltc$, it is enough to to show that the  stabilizer of $[L_t]\in\Psltc$ in $\SLt$ is discrete. Using  Lemma \ref{lemma:equiv}, we find that, in the aspherical elliptic case, where  $t\in (0,1)$, the roots are an unordered pair of distinct  points in the upper half plane, so there is a single hyperbolic isometry in $\PSLt$ interchanging them, hence the stabilizer in $\SLt$ is a 4 element subgroup. 

In  the hyperbolic case, where  $t\in (-1,0)$,  the roots $\zeta_1, \zeta_2$ are in opposite half-spaces and $\zeta_1\neq \bar\zeta_2$. Hence an element $g\in\SLt$ that fixes  the unordered pair $\zeta_1, \zeta_2$ has  two distinct fixed points   $\zeta_1, \bar \zeta_2$ in the same half plane. It follows that $g$ acts trivially in this half plane and thus $g=\pm \II.$ 

\sn (c) $\sltc$ admits a  pseudo-hermitian product of signature $(2,1)$, $\tr\left(X\overline Y\right)$, invariant under the conjugation action of $\SLt$. The associated projectivized null cone in $\CP^2$ is diffeomorphic to $S^3$, a model for the spherical CR structure on $S^3$. One can check that $L_t$ is a null vector, i.e.  $\tr(L_t\bar L_t)=0$,   for $t=-3\pm\sqrt{2}$. Thus  the hyperbolic spherical left-invariant structure on $\SLt$ is  a $2:1$ cover  of   an  $\SLt$-orbit  in $S^3$, consisting of all regular elements $[L]\in S^3$, whose  complement  in $S^3$ is  the set of  elements which are either real or degenerate non-real (see Lemma \ref{lemma:reg} and its proof). One can check that the only degenerate element in $S^3$ satisfies $a=c=0,$ $b\neq 0$, which is real. Thus  all irregular elements in $S^3$ are  the  real elements  $\RP^2\cap S^3\subset \CP^2,$ or $\R^2\cap S^3\subset\C^2,$ as claimed. 
\end{proof}

\begin{rmrk} \label{rmrk:counter}
In Cartan's classification \cite[p.~70]{Ca1}, the left-invariant spherical elliptic CR structure on $\SLt$ appears in  item $5^\circ$(B) of the first table, as a left-invariant CR structure on the group $\rm{Aff}(\R)\times \R/\Z$. This group   is {\em not} isomorphic to $\SLt$, yet it admits a left-invariant spherical structure, CR equivalent to the spherical elliptic CR structure on $\SLt$. This shows that the asphericity condition in Theorem \ref{thm:ens}
is essential. Both groups are subgroups of the full 4-dimensional group of automorphism of this homogeneous spherical CR manifold (the stabilizer in $\PUto$ of a chain in $S^3$). The hyperbolic spherical structure is item $8^\circ$(K$^\prime$).

 The elliptic and hyperbolic aspherical left-invariant structures on $\SLt$ appear in  items $4^\circ$(K) and $5^\circ$(K$^\prime$) (respectively) of the second table. In these items, Cartan gives explicit equations for the adjoint orbits   in inhomogeneous coordinates $(x,y)\in\C^2\subset\CP^2$ (an affine chart). For the elliptic aspherical orbits  he gives the equation $1+x\bar x- y\bar y=\mu|1+x^2-y^2|,$ with $\Im(x(1+\bar y))>0$ and $\mu>1;$  for the hyperbolic aspherical structures he gives the equation $x\bar x+y\bar y-1=\mu|x^2+y^2-1|,$ with $(x,y)\in\C^2\setminus\R^2$ and $0<|\mu|<1.$ Both equations are  $\tr(L\bar L)=\mu|\tr(L^2)|,$ with $(x,y)=(b+c,b-c))/(2a)$ in the elliptic case, and $(x,y)=(2a, b-c)/(b+c)$ in he hyperbolic case. The elliptic orbits are the generic orbits in   the exterior of $S^3$, given by $\tr(L\bar L)>0$, while  the hyperbolic orbits lie in its interior, given by $\tr(L\bar L)<0$. The elliptic orbits come in complex-conjugate pairs; that is, for each orbit, given by the pairs of roots $\zeta_1,\zeta_2\in\C\setminus\R$ in the same (fixed) half-plane,  with a fixed hyperbolic distance $d(\zeta_1,\zeta_2)$, there is a complex-conjugate orbit where the  pair of roots lie in the opposite half plane. The condition  $\Im(x(1+\bar y))>0$ constrain the roots to lie in one of the half planes, so picks up  one of the orbits in each  conjugate pair. The hyperbolic orbits are self conjugate. 
 \end{rmrk}

\section{$\SUt$}\label{sec:su2}
$\SUt\simeq S^3$ is the group of $2\times 2$ complex unitary matrices with det=1. Its Lie algebra $\sut$ consists of anti-hermitian $2\times 2$ complex matrices with $\sut\otimes\C=\sltc$. This case is easier then the previous case of $\SLt$, with no really new ideas, so we will be much briefer. The outcome is that there is a single 1-parameter family of left-invariant CR structures, exactly one of which is  spherical, the standard spherical structure in $S^3$, realizable in $\C^2$. The rest of the structures are 4:1 covers of generic adjoint  orbits in $\Pgc\simeq\CP^2$.

\begin{lemma}
$\Aut(\SUt)=\Aut(\sut)=\Inn(\SUt)=\SUt/\{\pm \II\}\simeq \SO_3.$
\end{lemma}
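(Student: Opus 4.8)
The plan is to establish the chain of equalities and isomorphisms by exhibiting each containment and then matching dimensions and connectedness. First I would note the inclusions $\Inn(\SUt)\subset\Aut(\SUt)\subset\Aut(\sut)$, where the first holds because the adjoint action always lands in the automorphism group, and the second because $\SUt$ is connected, so an abstract group automorphism is determined by its derivative at the identity (as recalled in the preliminaries). The kernel of $\Ad:\SUt\to\Aut(\sut)$ is the center, which for $\SUt$ is $\{\pm\II\}$, so $\Inn(\SUt)\simeq\SUt/\{\pm\II\}$. It is a classical fact that $\SUt/\{\pm\II\}\simeq\SO_3$; indeed the conjugation action of $\SUt$ on $\sut$ preserves the positive-definite form $-\det(X)=\langle X,X\rangle$ (the negative of the Killing form up to scale), giving a homomorphism $\SUt\to\SO_3$ with kernel $\{\pm\II\}$ and image a $3$-dimensional connected subgroup of the $3$-dimensional group $\SO_3$, hence all of it.

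Next I would close the loop by showing $\Aut(\sut)\subset\Inn(\SUt)$. As in the $\SLt$ case, the Killing form on $\sut$ is a nonzero multiple of $-\det$, which is now \emph{positive definite}, and is determined by the bracket alone, so $\Aut(\sut)\subset\mathrm{O}_3$ after normalizing. The triple product $(X,Y,Z)\mapsto\langle X,[Y,Z]\rangle$ is a nonvanishing volume form defined purely in terms of the bracket, so automorphisms preserve it and therefore $\Aut(\sut)\subset\SO_3$. Since we have already identified $\Inn(\SUt)$ with all of $\SO_3$, the sandwich $\SO_3\simeq\Inn(\SUt)\subset\Aut(\SUt)\subset\Aut(\sut)\subset\SO_3$ forces all four groups to coincide.

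This argument is essentially the same as the one given for $\PGLt=\Aut(\SLt)=\Aut(\slt)$ earlier, with the signature of the Killing form changed from $(2,1)$ to $(3,0)$; the only genuine difference is that in the compact case the orthogonal group $\SO_3$ is connected, so no component count is needed — one simply compares dimensions of the two connected $3$-dimensional groups $\Inn(\SUt)$ and $\SO_3$. The main (very minor) point to be careful about is the identification $\SUt/\{\pm\II\}\simeq\SO_3$, i.e. that the spin covering $\SUt=\mathrm{Spin}(3)\to\SO_3$ is surjective with kernel $\{\pm\II\}$; this is standard and can be cited or proved in one line by the surjectivity-of-connected-subgroup argument above. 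Everything else is a direct transcription of the preceding lemma, so I expect no real obstacle.
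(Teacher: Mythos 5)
Your proof is correct and follows essentially the same route as the paper: the Killing form and triple product (defined by the bracket alone) give $\Aut(\sut)\subset\SO_3$, the conjugation action gives $\Inn(\SUt)=\SUt/\{\pm\II\}\subset\SO_3$, and a connectedness-plus-dimension count closes the sandwich. You have simply spelled out the standard inclusions and the kernel computation that the paper leaves implicit.
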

\begin{proof}Similar to the $\SLt$ case, the Killing form and the triple product on $\sut$ are defined in terms 
of the Lie bracket alone. This gives a natural inclusion $ \Aut(\SUt)\subset \SOt$. 
The conjugation action gives an embedding $\Inn(\SUt)=\SUt/\{\pm \II\}\subset \SO_3$. 
The last two groups are connected and 3-dimensional, hence coincide.
\end{proof}

Since $\SUt\subset \SLtc$, with $(\sut)_\C=\sltc$, we can, like in the previous case of $G=\SLt$,  identify $\P((\sut)_\C),$ $\SUt$-equivariantly, with $S^2(\CP^1)$, the set of unordered pairs of points on $\CP^1=S^2$, with $\Aut(\SUt)=\SUt/\{\pm\II\}=\SO_3$ acting on $S^2(\CP^1)$ by euclidean rotations of  $\CP^1=S^2$, and complex conjugation in $\P((\sut)_\C)$ given by the antipodal map. Hence $\P((\sut)_\C)$ consists of non-antipodal unordered pairs of points $\zeta_1, \zeta_2\in S^2$, each of which is given uniquely, up to $\Aut(\SUt)=\SO_3$, by  their  spherical distance $d(\zeta_1, \zeta_2)\in [0,\pi).$

\begin{prop}\label{prop:sut}
Let $V_t\subset T_\C\SUt$, $t\in\R,$  be the left-invariant complex line bundle spanned at $e\in\SUt$ by  
\be\label{eq:LL}
L_t=\left(\begin{array}{cc}
0 &t-1\\
t+1& 0
\end{array}
\right)\in\sut\otimes\C=\sltc.\ee
 Then 
\benum
\item $V_t$ is a left-invariant CR structure on $\SUt$ for all $t\neq 0.$
\item $V_t$ is spherical if and only if $t=\pm 1$. 
\item Every  left-invariant  CR structure on $\SUt$  is CR equivalent to 
 $V_t$ for  a unique  $t\geq 1.$
\item The  aspherical left-invariant CR structures $V_t$, $t>1$, are  pairwise non-equivalent, even locally.  
\item $V_1$ is realized by any of the non-null orbits of the standard representation  of $\SUt$ in $\C^2$. The aspherical structures are locally realized as  $4:1$ covers of the adjoint orbits of $\SUt$ in $\Psltc$. 
\end{enumerate}
\end{prop}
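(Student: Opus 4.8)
The plan is to mirror, with lighter computations, the treatment of $\SLt$ in Propositions~\ref{prop:slt1} and~\ref{prop:slt2}, using the $\SUt$-equivariant identification $\P((\sut)_\C)\simeq S^2(\CP^1)$ recorded above: here $\Aut(\SUt)=\SUt/\{\pm\II\}\simeq\SOt$ acts by rotations of $\CP^1=S^2$, complex conjugation is the antipodal involution, and — since $\sut\simeq\R^3$ has no $2$-dimensional subalgebra — an element $[L]$ is regular if and only if it is non-real, if and only if the two roots $\zeta_1,\zeta_2$ of $p_L$ form a non-antipodal (possibly coincident) pair. For part (a) one writes $L_t=L_1+iL_2$ with $L_1,L_2\in\sut$: a direct computation gives $L_1=\left(\begin{smallmatrix}0&-1\\1&0\end{smallmatrix}\right)$ and $L_2=\left(\begin{smallmatrix}0&-it\\-it&0\end{smallmatrix}\right)$, which are $\R$-linearly independent exactly when $t\neq0$; equivalently, $p_{L_t}(\zeta)=(t+1)\zeta^2-(t-1)$, with roots $\pm\sqrt{(t-1)/(t+1)}$, is a non-antipodal pair precisely when $t\neq0$.

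For part (b) the plan is to repeat the coframe computation from the proof of Proposition~\ref{prop:slt1}(b). Fix a basis $(e_1,e_2,e_3)$ of $\sut$ with $[e_i,e_j]=\epsilon_{ijk}e_k$, so that the Maurer--Cartan form $\Theta=\sum_i\omega^ie_i$ of $\SUt$ satisfies $\d\omega^i=-\omega^j\wedge\omega^k$ cyclically. Since $L_t$ is a scalar multiple of $e_2+ite_3$, the pair $\theta=\omega^1$, $\theta_1=\omega^2+(i/t)\omega^3$ is an adapted coframe, rescaled (as in~\eqref{eq:adap}) to a well-adapted one $\phi=-\sign(t)\,\omega^1$, $\phi_1=\sqrt{|t|/2}\;\theta_1$. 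Substituting into the Maurer--Cartan equations puts $\d\phi,\d\phi_1$ into the form~\eqref{eq:str} with
\[
a=0,\qquad b=\frac{i(1+t^2)}{2|t|},\qquad c=\frac{i(1-t^2)}{2|t|}.
\]
Since $2|a|^2+9ib=9ib\neq0$, Proposition~\ref{prop:CRc} gives that $V_t$ is spherical if and only if $c=0$, i.e. $t=\pm1$; for these two values $L_t$ is nilpotent and $V_t$ is the standard structure on $S^3$, consistently with (e).

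For (c), under the identification above a regular class corresponds to a non-antipodal unordered pair on $S^2$, and two such pairs are $\SOt$-equivalent if and only if they have the same spherical distance; so it suffices to show that $t\mapsto d(\zeta_1,\zeta_2)$, with $\zeta_{1,2}=\pm\sqrt{(t-1)/(t+1)}$, is a strictly increasing bijection from $[1,\infty)$ onto $[0,\pi)$. Writing $w=(t-1)/(t+1)\in[0,1)$ and computing via stereographic projection, $\cos d(\zeta_1,\zeta_2)=\dfrac{w^2-6w+1}{(1+w)^2}$, whose $w$-derivative $\dfrac{8(w-1)}{(1+w)^3}$ is negative on $[0,1)$; hence $\cos d$ decreases strictly from $1$ (at $t=1$) to $-1$ (as $t\to\infty$), while $w$ increases with $t$. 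This proves (c), and then (d) is immediate from Theorem~\ref{thm:ens}: a local CR equivalence between aspherical $V_t$ and $V_{t'}$ ($t,t'>1$) induces a Lie algebra automorphism of $\sut$ carrying the one regular class to the other, i.e. an $\SOt$-equivalence of the corresponding points of $S^2(\CP^1)$, which by (c) forces $t=t'$.

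Part (e) splits in two. For $t=1$, $L_1$ is, up to scale, $\left(\begin{smallmatrix}0&0\\1&0\end{smallmatrix}\right)$, which annihilates $u=\binom{0}{1}$; so by Lemma~\ref{lemma:real} the evaluation map $\mu(g)=g\binom{0}{1}$ is an $\SUt$-equivariant CR map, with image the orbit $\SUt\cdot\binom{0}{1}=S^3\subset\C^2$ and trivial stabilizer, hence a CR diffeomorphism of $\SUt$ onto $S^3$ with its standard structure; the remaining nonzero $\SUt$-orbits in $\C^2$ are the round spheres $|z|=r$, mutually homothetic and so CR equivalent to this one. For $t>1$ the roots $\zeta_1\neq\zeta_2$ are distinct and non-antipodal, so the stabilizer of the unordered pair $\{\zeta_1,\zeta_2\}$ in $\SOt$ is the discrete group $\{1,R_\pi\}\simeq\Z_2$, where $R_\pi$ is the unique half-turn interchanging the two points; in particular $L_t$ has trivial centralizer in $\sut$, so Proposition~\ref{prop:real} makes $\mu(g)=[\Ad_g L_t]$ a $G$-equivariant local realization onto the $3$-dimensional adjoint orbit $\OO\subset\P((\sut)_\C)$, and since $\SUt\to\SOt$ is a double cover the map $\mu$ is a $4:1$ covering of $\OO$. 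The only step with real computational content is (b), the passage to a well-adapted coframe and the extraction of $b,c$, and it is routine once the $\SLt$ computation is in hand; the one subtlety is the covering degree in (e), where one must notice that a generic unordered pair of points of $S^2$ is \emph{not} antipodal and hence lies on no common rotation axis, so its $\SOt$-stabilizer is $\Z_2$ rather than a circle — which is exactly why $\SUt\to\OO$ is a $4:1$, not a $2:1$, cover. Beyond this I anticipate no genuine obstacle.
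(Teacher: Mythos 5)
Your proposal is correct and follows the paper's own proof essentially step for step: regularity via the absence of $2$-dimensional subalgebras of $\sut$, a left-invariant well-adapted coframe fed into Proposition \ref{prop:CRc} for sphericity, monotonicity of the spherical distance of the roots under inverse stereographic projection for (c), Theorem \ref{thm:ens} for (d), and Lemma \ref{lemma:real} together with Proposition \ref{prop:real} and the order-$4$ stabilizer count for (e). The differences are only cosmetic (your coframe normalization handles $t<0$ uniformly, and your vector $\binom{0}{1}$ is indeed the one annihilated by $L_1$ — the paper's $v=(\lambda,0)$ appears to be a typo for $(0,\lambda)$).
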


\begin{proof}
(a) Note that $L_t\in\sut$ only for $t=0$ and that $\sut$  does not have 2-dimensional subalgebras. It follows that  $[L_t]$ is regular  for all $t\neq 0$.

\sn (b)  We apply  Proposition \ref{prop:CRc}. The left-invariant $\sut$-valued Maurer Cartan form  on $\SUt$ is 
\be\label{eqn:mc}\Theta=g^{-1}\d g= \left(\begin{array}{cc}
i\alpha &\beta+i\gamma\\
-\beta+i\gamma & -i\alpha 
\end{array}.
\right)
\ee
 The Maurer Cartan equation $\d\Theta=-\Theta\wedge\Theta$ gives
 $$
 \d\alpha=-2\beta\wedge\gamma, \  \d\beta=-2\gamma\wedge\alpha, \  \d\gamma=-2\alpha\wedge\beta.
 $$ 
A coframe well adapted to $V_t$ is 
$$\phi=\alpha,\ \phi_1=\sqrt{t}\beta+{i\over\sqrt{t}}\gamma,
$$
satisfying 
$$\d\phi=i\phi_1\wedge\bar\phi_1,\quad \d\phi_1=-i\left( {1\over t}+t \right)\phi\wedge\phi_1 -i\left( {1\over t}-t \right)\phi\wedge\bar\phi_1. 
$$
We conclude from  Proposition \ref{prop:CRc} that   $V_t$ is spherical if and only if 
 $\left( {1\over t}+t \right)\left( {1\over t}-t \right)=0;$ that is, $t=\pm 1.$
 
 \sn(c)  The quadratic polynomial associated to $L_t$ is $(t+1)\zeta^2-(t-1)$, with roots $\zeta_\pm=\pm\sqrt{(t-1)/(t+1)}$. For $t=1$ (the spherical structure) this is a double point at $\zeta=0$, and for $t>1$ these are a pair of points  symmetrically situated on the real axis, in the interval $(-1,1)$. As $t$ varies from $1$ to $\infty$ the spherical distance $d(\zeta_+, \zeta_-)$ increases monotonically from $0$ to $\pi$ (see next paragraph). It follows that every pair of unordered non-antipodal pair of points on $S^2$ can be mapped by $\Aut(\SUt)=\SO_3$ to a  pair $\zeta_\pm$ for a unique $t\geq 1$. 
 
 One way to see the claimed statement about $d(\zeta_+, \zeta_-)$ is to place the roots  on the sphere $S^2$, using the inverse stereographic projection $\zeta\mapsto   (2\zeta, 1-|\zeta|^2)/(1+|\zeta|^2)\in\C\oplus\R$. Then $\zeta_\pm\mapsto (\pm \sin\theta,0,\cos\theta)\in\R^3$, where $\cos\theta=1/t$ and $\theta\in[0,\pi/2)$ for $t\in[1,\infty).$ Thus as $t$ increases from $t=1$ to $\infty$ the pair of  points on $S^2$ start  from a double point at $(1,0,0)$, move in opposite directions along the meridian $y=0$ and  tend towards the poles $ (0,0,\pm 1)$ as $t\to\infty$.

 \sn (e) Every non-null orbit of the standard action of $\SUt$ on $\C^2$ contains a point of the form $v=(\lambda,0),$ $\lambda\in \C^*$. Since the stabilizer of such a point is trivial and $L_1v=0$, it follows by  Lemma \ref{lemma:real} that $g\mapsto gv$ is a CR embedding of $V_1$ in $\C^2$.  
 For $t>1$,  we use Proposition \ref{prop:real} to realize the aspherical CR structure $V_t$ as the $\SUt$-orbit of $[L_t]$ in $\Psltc$. The stabilizer in $\SO_3$ is the two element group interchanging the two roots in $S^2$, hence the stabilizer in $\SUt$ is a 4 element subgroup. 
\end{proof}

\begin{rmrk}As in the $\SLt$ case (see Remark \ref{rmrk:short}),  there is a somewhat quicker way to prove  item (c). First note that $\Aut(\SUt)=\SO_3$ acts transitively on the set of  2-dimensional subspaces of $\sut$, hence one can fix the contact plane $D_e$ arbitrarily, say $D_e=\Ker(\alpha)=\Span\{B,C\},$  where $A,B,C$ is the basis of $\sut$ dual to $\alpha, \beta, \gamma$ of equation \eqref{eqn:mc}. Then,  using  the    subgroup $\rm{O}_2\subset \SO_3=\Aut(\SUt)$ leaving  invariant $D_e$, one can map any almost complex structure on $D_e$ to $J_t:B\mapsto tC$, for a unique $t\geq 1,$ with  associated $(0,1)$-vector  $B+itC=-L_t$. 
\end{rmrk}

\begin{rmrk}\label{rmrk:burns} Proposition \ref{prop:sut}(e) gives a $4:1$  CR immersion  $\SUt\to \Psltc\simeq\CP^2$  of each of the aspherical left-invariant CR structures $V_t$, $t>1$. In fact, the proof shows that   $\SUt\to\sltc\simeq\C^3$, $g\mapsto gL_tg^{-1}$, is a $2:1$ CR-immersion. It is still unknown, as far as we know, if one can find immersions into $\C^2$. However, it is known that one cannot find  CR {\em embeddings} of the aspherical $V_t$ into $\C^n$, $n\geq 2$. This was first proved in  \cite{Ro}, by showing that any function $f:\SUt\to \C$ which is CR with respect to any of the  aspherical $V_t$
 is necessarily {\em even}, i.e.   $f(-g)=f(g).$ A simpler representation theoretic argument was later  given in \cite{Bu}, which we proceed to  sketch here (with minor notational modifications). 
 
 First, one embeds    $\mu:\SUt\to \C^2$, $g\mapsto g{1\choose 0},$ with image $\mu(\SUt)=S^3$, mapping the action of $\SUt$ on itself by left translations  to  the restriction to $S^3$ of the standard linear action  of $\SUt$ on $\C^2$. Next, one uses the  `spherical harmonics' decomposition  $L^2(S^3)=\bigoplus_{p,q\geq 0} H^{p,q}$, where $H^{p,q}$ is the  restriction to $S^3$ of the complex homogenous  harmonic polynomials on $\C^2$ of bidegree $(p,q)$; that is, complex polynomials $f(z_1, z_2, \bar z_1, \bar z_2)$ which are homogenous of degree $p$ in $z_1,z_2$,  homogenous of degree $q$ in $\bar z_1, \bar z_2$, and satisfy $(\partial_{z_1}\partial_{\bar z_1}+\partial_{z_2}\partial_{\bar z_2}) f=0$. Each $H^{p,q}$ has dimension $p+q+1$, is $\SUt$-invariant and irreducible, with $-\II\in\SUt$ acting by $(-1)^{p+q}.$ 
 
 Next, one checks that  $Z:=\bar z_2\partial_{z_1}-\bar z_1\partial_{z_2}$  is an $\SU_2$-invariant $(1,0)$-complex vector field on $\C^2$,  tangent to $S^3$,  mapping  $H^{p,q}\to H^{p-1,q+1}$ for all  $p>0, q\geq 0$, $\SUt$-equivariantly. The latter  is a non-zero map, hence, by Schur's Lemma, it is an {\em isomorphism}.  Similarly,  $\bar Z$ is a $(0,1)$-complex vector field on $\C^2$, tangent to $S^3$, defining   an $\SUt$-isomorphism  $H^{p,q}\to H^{p+1,q-1}$  for all  $q>0, p\geq 0$. It follows that each $H^k:=\bigoplus_{p+q=k}H^{p,q}$, $k\geq 0$,  is invariant under $Z, \bar Z.$ 
 
 Next, one checks that $\bar Z_t:=(1+t)\bar Z+ (1-t)Z$, restricted to $S^3$,  spans $\d\mu(V_t)$. That is,  $f:S^3\to \C$ is CR with respect to $\d\mu(V_t)$ if and only if $\bar Z_t f=0$. By the previous paragraph, each $H^k$ is $\bar Z_t $ invariant, hence  $\bar Z_t f=0$ implies $\bar Z_t f^k=0$ for all $k\geq 0$, where $f^k\in H^k$ and $f=\sum f^k$. Now  one uses the previous paragraph to show  that for $k$ odd and $t>1$, $\bar Z_t$ restricted to $H^k$ is {\em invertible}. It follows that $\bar Z_t f=0$, for $t>1$, implies  that $ f^k=0$ for all $k$ odd;  that is, $f$ is even, as claimed. \qed

\end{rmrk}

\begin{rmrk} In Cartan's classification \cite[p.~70]{Ca1}, the spherical structure $V_1$ is item $1^\circ$ of the first table. The  aspherical  structures appear in  item $6^\circ$(L) of the second table. Note  that Cartan has an error  in this item (probably typographical): the equation for the $\SUt$-adjoint orbits, in homogenous coordinates in $\CP^2$,  should be $x_1\bar x_1+x_2\bar x_2+x_1\bar x_2=\mu|x_1 ^2+x_2^2+x_3^2|,$ $\mu>1$ (as appears correctly on top of p.~67). This is a coordinate version of the equation $\tr(L\bar L^t)=\mu|\tr(L^2)|$.
\end{rmrk}

\section {The Heisenberg group}

The Heisenberg group $H$  is the group of matrices of the form 
$$\left(\begin{array}{ccc}1&x&z\\0&1&y\\ 0&0&1\end{array}\right), \quad x,y,z\in\R.$$
Its Lie algebra  $\h$  consists of matrices of the form 
$$\left(\begin{array}{ccc}0&a&c\\0&0&b\\ 0&0&0\end{array}\right), \quad a,b,c\in\R.$$

\begin{lemma} $\Aut(H)=\Aut(\h)$ is the 6-dimensional Lie group, acting on $\h$ by 
\be\label{eq:auth}
\left(\begin{array}{cc}T&0\\ \vb &\det(T)\end{array}\right), \quad T\in\GL_2(\R),\ \vb\in\R^2
\ee
(matrices with respect to the basis dual to $a,b,c$). 
\end{lemma}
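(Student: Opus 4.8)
The plan is to identify $\Aut(\h)$ by exploiting the canonical structure of $\h$ as a two-step nilpotent Lie algebra, namely the flag $[\h,\h]\subset\h$ with $[\h,\h]=\R C$ one-dimensional (spanned by the central generator $C$ dual to $c$). Any $\varphi\in\Aut(\h)$ must preserve this flag: it fixes $\R C$ as a set, hence acts on the quotient $\h/\R C\cong\R^2$ (coordinates $a,b$) by some $T\in\GL_2(\R)$, and acts on $\R C$ by a scalar $\lambda$. The first step is to derive the compatibility constraint $\lambda=\det(T)$ from the bracket relation $[A,B]=C$ (with $A,B$ the generators dual to $a,b$): applying $\varphi$ gives $[\varphi A,\varphi B]=\varphi C=\lambda C$, while the induced map on $\h/\R C$ computes the same bracket up to $\det(T)$, since the bracket $\Lambda^2(\h/\R C)\to\R C$ is the determinant form. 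Hence $\lambda=\det(T)$ necessarily.

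Next I would parametrize the remaining freedom. Having fixed $T$ and $\lambda=\det(T)$, an automorphism is determined by where it sends $A$ and $B$ inside $\h$, not just in the quotient: $\varphi A = (\text{lift of }T\text{-image of }A) + a_1 C$ and $\varphi B = (\text{lift}) + b_1 C$, for arbitrary scalars $a_1,b_1\in\R$. One checks directly that \emph{any} such choice is a Lie algebra homomorphism — the only bracket to verify is $[\varphi A,\varphi B]=\lambda C$, and the added central terms $a_1C,b_1C$ drop out of the bracket since $C$ is central; the computation on the quotient part already gives $\det(T)\,C=\lambda C$. So the map $\varphi$ is automatically a homomorphism, and it is invertible precisely when $T\in\GL_2(\R)$. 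This exhibits $\Aut(\h)$ as the set of block-lower-triangular matrices \eqref{eq:auth} with $T\in\GL_2(\R)$ and $\vb=(a_1,b_1)\in\R^2$ in the bottom row, $\det(T)$ in the bottom-right corner; it is a subgroup of $\GL_3(\R)$, of dimension $4+2=6$, and closure under multiplication and inversion is immediate from the block form (one checks $\det(T_1T_2)=\det(T_1)\det(T_2)$ makes the corner entry multiplicative, consistent with the block product).

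Finally, the identity $\Aut(H)=\Aut(\h)$ follows from the general principle recalled in \S\ref{sec:homog}: $H$ is simply connected (it is diffeomorphic to $\R^3$ via the exponential/global matrix coordinates $x,y,z$), so every Lie algebra automorphism of $\h$ integrates uniquely to a Lie group automorphism of $H$, giving $\Aut(H)=\Aut(\h)$.

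The calculations here are entirely routine; the only thing worth being careful about is the bookkeeping of the bracket form $\Lambda^2(\h/\R C)\to[\h,\h]$, i.e.\ making sure the scalar by which $\varphi$ acts on $[\h,\h]=\R C$ is forced to be $\det(T)$ and is not an independent parameter. That is the single substantive point; once it is in place, the description \eqref{eq:auth} and the dimension count $6$ drop out, and there is no real obstacle.
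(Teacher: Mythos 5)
Your proof is correct. It does not follow the paper's official proof of the lemma, which simply writes down the commutation relations $[A,B]=C$, $[A,C]=[B,C]=0$ and asserts that one verifies by direct calculation that the matrices in \eqref{eq:auth} are exactly those preserving them; instead, you give the structural argument via the characteristic ideal $\R C$ (the derived algebra, which here coincides with the center), the induced action $T$ on the quotient $\h/\R C$, and the identification of the bracket $\Lambda^2(\h/\R C)\to\R C$ with the determinant form forcing $\lambda=\det(T)$. The paper in fact records precisely this argument in the remark immediately following the lemma, calling it ``a cleaner proof,'' and notes the advantage you would also get: it extends verbatim to the higher-dimensional Heisenberg groups, whereas the brute-force verification does not scale. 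Your write-up adds two things the paper leaves implicit: the converse direction (that every block matrix of the stated form really is an automorphism, because the central correction terms $a_1C$, $b_1C$ drop out of all brackets), and the reduction $\Aut(H)=\Aut(\h)$ via simple connectedness of $H$, which the paper covers only by the general statement in \S\ref{sec:homog}. One tiny bookkeeping point: besides $[\varphi A,\varphi B]=\lambda C$ you should also note that $[\varphi A,\varphi C]=[\varphi B,\varphi C]=0$ holds automatically because $\varphi C\in\R C$ is central; this is trivial but is part of ``the map is a homomorphism.''
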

\begin{proof}Let $A,B,C$ be the basis of $\h$ dual to $a,b,c$. Then $$[A,B]=C,\ [A,C]=[B,C]=0.$$
One can then verify by a direct calculation that the matrices in formula \eqref{eq:auth} are those preserving these commutation relations.
\end{proof}

\begin{rmrk} Here is a cleaner proof of the last Lemma (which works also for the higher dimensional Heisenberg group):  the commutation relations imply that   $\mathfrak{z}:=\R C$ is the center of $\h$, so any  $\phi\in\Aut(H)$ leaves it invariant, acting on $\mathfrak{z}$ by some $\lambda\in\R^*$ and on $\h/\mathfrak{z}$ by some $T\in\Aut(\h/\mathfrak{z}).$ The Lie bracket defines a non-zero element $\omega\in \Lambda^2((\h/\mathfrak{z})^*)\otimes \mathfrak{z}$ fixed by $\phi$. Now  $\phi^*\omega=(\lambda/\det(T))\omega$, hence $\lambda=\det(T)$. This gives the desired form of $\phi$, as in equation \eqref{eq:auth}.
\end{rmrk}

\begin{prop} Let $V\subset T_\C H$   be the left-invariant complex line bundle spanned at $e\in H$ by  
\be\label{eq:H}
L=\left(\begin{array}{ccc}
0 &1&0\\
0&0& i\\
0&0&0
\end{array}
\right)\in\h\otimes\C.\ee
 Then 
\benum
\item $V$ is the unique  left-invariant CR structure on $H$, up to the action of $\Aut(H)$. 
\item V is spherical, CR equivalent to  the complement of a point in $S^3$. 
\item $V$ is  also embeddable in $\C^2$ as the real quadric $
\Im(z_1)=|z_2|^2.$ In these coordinates, the group multiplication in $H$ is given by 
\[
(z_1,z_2)\cdot (w_1,w_2)=(z_1+w_1,z_2+w_2+2iz_1\bar{w_1}).
\]

\end{enumerate}
\end{prop}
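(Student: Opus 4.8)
The plan is to verify each of the three claims using the tools already developed for the $\SLt$ and $\SUt$ cases, adapted to the Heisenberg group $H$.

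\textbf{Part (a): uniqueness of the left-invariant CR structure.} First I would parametrize left-invariant CR structures by $[L]\in\P(\h_\C)_{\mathrm{reg}}$, as in Definition \ref{def:reg}. Writing $L = \alpha A + \beta B + \gamma C$ with $\alpha,\beta,\gamma\in\C$, the regularity condition requires that $L,\overline L,[L,\overline L]$ be linearly independent over $\C$ inside $\h_\C$. Since $[A,B]=C$ and $C$ is central, $[L,\overline L] = (\alpha\bar\beta - \bar\alpha\beta)C = -2i\,\Im(\alpha\bar\beta)\,C$, so regularity forces $\Im(\alpha\bar\beta)\neq 0$; in particular $\alpha,\beta$ are nonzero and $\mathbf{C}$-linearly independent, and $L$ is automatically non-real. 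Then I would use the explicit form of $\Aut(H)$ from equation \eqref{eq:auth}: the block $T\in\GL_2(\R)$ acts on the $(a,b)$-part, and since $\{\Re(\alpha,\beta),\Im(\alpha,\beta)\}$ is a real basis of $\R^2$, by a suitable $T$ we may normalize the $(A,B)$-components of $L$ to $A + iB$ (i.e. $(\alpha,\beta)=(1,i)$); the remaining freedom in the $\vb$-row and the scaling $\det(T)$ then kills the $C$-component, bringing $L$ to the form in equation \eqref{eq:H}. This establishes (a).

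\textbf{Part (b): sphericity and realizability.} For sphericity I would produce a well-adapted coframe and invoke Proposition \ref{prop:CRc}. Taking the left-invariant Maurer--Cartan forms $a,b,c$ dual to $A,B,C$ with $\d a = \d b = 0$, $\d c = -a\wedge b$ (from $[A,B]=C$), an adapted coframe for $L$ in \eqref{eq:H} is $\phi = c$, $\phi_1 = a - i b$ (so that $\phi(L)=\phi_1(L)=0$). One computes $\d\phi = \d c = -a\wedge b$, and since $\phi_1\wedge\bar\phi_1 = (a-ib)\wedge(a+ib) = 2i\,a\wedge b$, a real rescaling of $\phi_1$ makes the coframe well-adapted with $\d\phi = i\phi_1\wedge\bar\phi_1$. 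Then $\d\phi_1 = \d a - i\,\d b = 0$, so in the notation of \eqref{eq:str} we get $a = b = c = 0$; Proposition \ref{prop:CRc} immediately gives sphericity. For the realization as the complement of a point in $S^3$, I would exhibit the CR-equivalence directly in the coordinates of part (c): the quadric $\Im(z_1) = |z_2|^2$ is the standard Heisenberg hyperquadric, which via the Cayley transform is the complement of a single point in the sphere $|Z_1|^2+|Z_2|^2=|Z_3|^2\subset\CP^2$ (this is classical, and can also be checked by the Cayley map $[z_1:z_2:1]\mapsto$ appropriate homogeneous coordinates sending the quadric onto $S^3$ minus the "point at infinity").

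\textbf{Part (c): the embedding in $\C^2$ and the group law.} Here I would write down the map $\mu:H\to\C^2$ explicitly and check it is a CR embedding using Lemma \ref{lemma:real} — but since $H$ is not linear-reductive in the needed way, it is cleaner to exhibit $\mu$ by hand. Parametrizing $g\in H$ by $(x,y,z)$, I would set $z_1 = 2z - xy + i\,x^2$ (or a similar quadratic expression, to be pinned down so that the image is exactly $\Im(z_1)=|z_2|^2$) and $z_2 = $ a linear-plus-correction expression in $x,y$; then I would verify that $\d\mu$ maps the left-invariant $(0,1)$-vector field spanned by $L$ into $T^{(0,1)}\C^2$, i.e. that the pulled-back form $\d z_1$ and $\d z_2$ vanish on $L$ after the identification, and that $\mu$ is injective with injective differential. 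The group law formula is then obtained by transporting the matrix multiplication of $H$ through $\mu$; this is a routine but slightly fiddly computation — one substitutes the coordinate change into $\left(\begin{smallmatrix}1&x&z\\0&1&y\\0&0&1\end{smallmatrix}\right)\left(\begin{smallmatrix}1&x'&z'\\0&1&y'\\0&0&1\end{smallmatrix}\right)$ and simplifies.

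\textbf{Main obstacle.} The conceptual content is light: parts (a) and (b) are short once the structure constants are in hand. The real work, and the step most prone to sign and factor-of-$2$ errors, is part (c): choosing the precise quadratic coordinate change on $H$ so that its image is \emph{exactly} the stated quadric $\Im(z_1)=|z_2|^2$, that the induced CR structure matches $V$, \emph{and} that the transported group law comes out in the displayed normalized form $(z_1,z_2)\cdot(w_1,w_2) = (z_1+w_1,\,z_2+w_2+2iz_1\bar w_1)$. Getting all three to agree simultaneously pins down the normalization uniquely, so I expect to spend most of the effort reconciling these constants rather than on any genuine difficulty.
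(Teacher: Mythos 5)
Your part (a) is essentially the paper's argument: the paper first normalizes the contact plane to $D_e=\{c=0\}$ using the adjoint action and then normalizes the complex structure on it using the stabilizing subgroup of $\Aut(H)$, while you normalize $L$ directly using \eqref{eq:auth}; both are correct. Your sphericity computation in (b) is a legitimate alternative route: the paper never computes the Cartan curvature for $H$, deducing sphericity instead from an explicit realization inside $S^3$, whereas you get it from Proposition \ref{prop:CRc} via $\d\phi_1=0$, which is cheaper and more in the style of the other sections. One slip there: for $L=A+iB$ the annihilating $(1,0)$-form is $\phi_1=a+ib$, not $a-ib$ (your choice satisfies $\phi_1(L)=2\neq 0$ and is adapted to $\bar L$); the conclusion survives because $\d a=\d b=0$ either way, but you then also need $\phi=-c$ up to positive scale to get $\d\phi=i\phi_1\wedge\bar\phi_1$ with the right sign.

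The genuine gap is in the second half of (b) and in (c): you never produce the embedding, and you explicitly leave the coordinate change ``to be pinned down.'' The idea you are missing --- and in fact talk yourself out of with the remark that ``$H$ is not linear-reductive in the needed way'' --- is that Lemma \ref{lemma:real} applies to \emph{any} finite-dimensional complex representation, unipotent ones included. The paper writes down an explicit homomorphism $\rho:H\to\GL_3(\C)$ into unipotent upper-triangular matrices preserving a Hermitian form of signature $(2,1)$, checks that $\rho'(L)\e_3=0$ and that the induced action on $S^3\setminus\{[\e_1]\}$ is free and transitive; Lemma \ref{lemma:real} then delivers in one stroke the CR embedding onto $S^3$ minus a point (hence sphericity again and the statement in (b)), the quadric equation $2\Im(z_1)=|z_2|^2$ in the affine chart, and --- because $\rho$ is a group homomorphism --- the displayed multiplication law, read off directly from matrix multiplication of $\rho(h)$. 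Your plan of guessing a quadratic map $\mu$ and verifying by hand the image, the CR condition, and the transported group law can be made to work, but as written it is a program rather than a proof: the only nontrivial content of (c) is the explicit formula, and reconciling your three simultaneous requirements without the representation-theoretic scaffolding is precisely the computation you defer.
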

\begin{proof}
(a) The adjoint action is $(x,y,z)\cdot(a,b,c)=(a, b, c + b x - a y).$ This has  1-dimensional  orbits, the affine lines parallel to the $c$ axis, except the $c$ axis itself (the center of $\h$), which is pointwise fixed. 
The `vertical' 2-dimensional  subspaces in $\h$, i.e.  those containing the $c$ axis, are subalgebras, so give degenerate CR structures. It is easy to see that any other 2-dimensional subspace can be mapped by the adjoint action  to $D_e=\{c=0\}$ and that the subgroup of $\Aut(H)$ preserving $D_e$  consists of   
$$\left(\begin{array}{cc}T&0\\0&\det(T)\end{array}\right), \quad T\in\GL_2(\R),$$
(written with respect to the basis of $\h$ dual to $a,b,c$). 
These act transitively on the set of almost complex structures on $D_e$. One can thus take the almost complex structure on $D_e$ mapping $A\mapsto B,$ with associated $(0,1)$ vector $L=A+iB.$  

\sn (b) Define a Lie algebra homomorphism  $\rho':\h\to \End(\C^3)$
\be \label{eq:rep}
(a,b,c)\mapsto
\left(\begin{array}{ccc}0&-b-ia&2c\\ 0&0&a+ib\\ 0&0&0\end{array}\right).\ee
with associated complex linear representation $\rho:H\to \GL_3(\C)$, 
\be \label{eq:repg}
(x,y,z)\mapsto
\left(\begin{array}{ccc}1&-y-ix&2z-xy-{i\over 2}(x^2+y^2)\\ 0&1&x+iy\\ 0&0&1\end{array}\right).
\ee
Then one can verify that $\rho$ has  the following properties: 
\bitem
\item  It preserves  the pseudo-hermitian quadratic form $ |Z_2|^2-2\Im(Z_1\bar Z_3)$  on $\C^3$, of signature $(2,1)$.
\item The induced $H$-action on $S^3\subset \CP^2$ (the projectivized  null cone of the pseudo-hermitian form) has 2 orbits:  a fixed point  $[\e_1]\in S^3$ and its complement. 
\item The $H$-action on  $S^3\setminus\{[\e_1]\}$ is free.
\item $\rho'(L)\e_3=0.$
\end{itemize}
It follows, by Lemma \ref{lemma:real}, that $H\to S^3\subset\CP^2,$ $h\mapsto [\rho(h)\e_3]$, is a  CR embedding of the CR structure $V$ on $H$ in $S^3$,  whose image is the complement of $[\e_1]$.   

\sn (c) In the  affine chart $\C^2\subset \CP^2$, $(z_1, z_2)\mapsto [z_1:z_2:1]$, the equation of $H=S^3\setminus [\e_1]$ is  $2\Im(z_1)=|z_2|^2.$ After  rescaling the $z_1$ coordinate one obtains $\Im(z_1)=|z_2|^2.$ The claimed formula for the group product in these coordinates follows from  the embedding $h\mapsto [\rho(h)\e_3]$ and formula \eqref{eq:repg}. \end{proof}

\begin{rmrk} The origin of  formula \eqref{eq:rep} is as follows. Consider the standard representation of  $\SUto$ on $\C^{2,1}$ and the resulting action on $S^3\subset\CP^2=\P(\C^{2,1})$. The stabilizer in $\SUto$ of a point  $\infty \in S^3$ is a 5-dimensional subgroup $P\subset\SUto$, acting transitively on $S^3\setminus\{\infty\}.$ The stabilizer in $P$ of a point $o\in S^3\setminus\{\infty\}$ is a subgroup $\C^*\subset P$, whose conjugation action on $P$ leaves invariant a 3-dimensional normal subgroup of $P$, isomorphic to our $H$, so that $P= H\rtimes\C^*$. To get formula \eqref{eq:rep}, we   consider the adjoint action of $\C^*$ on the Lie algebra $\mathfrak{p}$ of $P$, under which  $\mathfrak{p}$ decomposes  as  $\mathfrak{p}=\h\oplus \C$,  as in  \eqref{eq:rep}.   For more details, 
see \cite[pp.~115-120]{Gol2}. \end{rmrk}

\begin{rmrk} In Cartan's classification \cite[p.~70]{Ca1}, the left-invariant spherical structure on $H$ is item $2^\circ$(A) of the first table. 
\end{rmrk} 

\section{The Euclidean Group}

Let $\E=\SO_2\rtimes\R^2$ be the group of orientation preserving isometries of $\R^2$, equipped with the standard euclidean metric. Every element  in $\E$ is of the form $\vb\mapsto R\vb+\w$, for some $R\in \SO_2$, $\w\in\R^2$. 
If we embed $\R^2$ as the  affine plane  $z=1$ in $\R^3$, $\vb\mapsto (\vb,1)$, then $\E$ is identified with the subgroup of $\GL_3(\R)$ consisting of matrices in block  form 
\be\label{eq:E} \left(\begin{array}{ccc}R&\w\\  0&1\end{array}\right), \quad R\in\SO_2, \ \w\in\R^2.
\ee
Its Lie algebra  $\Ee_2$  consists of matrices of the form 
\be\label{eq:Ee}\left(\begin{array}{ccc}0&-c&a\\ c&0&b\\ 0&0&0\end{array}\right), \quad a,b,c\in\R.
\ee

Let $\CE_2$ be the group of {\em similarity} transformations of $\R^2$ (not necessarily orientation preserving). That is, maps $\R^2\to\R^2$  of the form $\vb\mapsto T\vb+\w$, where $\w\in\R^2$,  $T\in \mathrm{CO}_2=\R^*\times \rm{O}_2$.
Then  $\E\subset \CE_2$ is a normal subgroup with trivial centralizer,  hence there is a natural inclusion $\CE_2\subset \Aut(\E)$. 
\begin{lemma}
$\CE_2= \Aut(\E)=\Aut(\Ee_2)$. 
\end{lemma}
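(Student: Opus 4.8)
The plan is to show the reverse inclusion $\Aut(\Ee_2)\subset\CE_2$, since the chain $\CE_2\subset\Aut(\E)\subset\Aut(\Ee_2)$ has already been established just above the lemma. So let $\varphi\in\Aut(\Ee_2)$. First I would identify the canonical subspaces of $\Ee_2$ that any automorphism must respect. Writing $A,B,C$ for the basis of $\Ee_2$ dual to $a,b,c$ from \eqref{eq:Ee}, the bracket relations are $[C,A]=B$, $[C,B]=-A$, $[A,B]=0$, so the translation subalgebra $\m:=\Span\{A,B\}=[\Ee_2,\Ee_2]$ is the derived algebra and is the unique $2$-dimensional abelian ideal; hence $\varphi(\m)=\m$. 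Consequently $\varphi$ descends to an automorphism of the $1$-dimensional quotient $\Ee_2/\m$, i.e. $\varphi(C)\equiv \lambda C \pmod{\m}$ for some $\lambda\in\R^*$, and $\varphi|_\m=:T\in\GL(\m)\simeq\GL_2(\R)$.

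Next I would extract the constraint linking $T$ and $\lambda$. The operator $\ad_C$ acts on $\m$ as rotation by $90^\circ$, i.e. as the matrix $J_0=\left(\begin{smallmatrix}0&-1\\1&0\end{smallmatrix}\right)$ in the basis $A,B$. Applying $\varphi$ to $[C,X]=\ad_C(X)$ for $X\in\m$ and using that $\varphi(C)=\lambda C+(\text{something in }\m)$ together with the fact that $\m$ is abelian, one gets $T\circ\ad_C = \ad_{\varphi(C)}\circ T = \lambda\,\ad_C\circ T$ on $\m$, i.e. $T J_0 = \lambda J_0 T$, or $J_0^{-1} T J_0 = \lambda T$. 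Taking determinants gives $\det T=\lambda^2\det T$, so $\lambda=\pm1$. When $\lambda=1$, the relation $J_0^{-1}TJ_0=T$ says $T$ commutes with $J_0$, so $T\in\R^*\cdot\SO_2$; when $\lambda=-1$, $J_0^{-1}TJ_0=-T$, which forces $T$ to anticommute with $J_0$, and such $T$ are exactly the reflection-type conformal maps, i.e. $T\in\R^*\cdot(\rm{O}_2\setminus\SO_2)$. In both cases $T\in\mathrm{CO}_2$. Finally, any choice of $T\in\mathrm{CO}_2$ together with the forced $\lambda$ and an arbitrary image of $C$ within its coset (the freedom in the $\m$-component of $\varphi(C)$) is realized by conjugation by a similarity transformation: the $\m$-component freedom corresponds precisely to the translation part $\w$ of an element of $\CE_2$ acting by conjugation on $\Ee_2$. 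This exhibits $\varphi$ as the differential at $e$ of conjugation by an element of $\CE_2$, giving $\varphi\in\CE_2$ and completing the reverse inclusion. Since $\E$ is connected, $\Aut(\E)$ embeds in $\Aut(\Ee_2)$, and the displayed chain then collapses to equalities.

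I would organize the write-up as: (1) record the bracket relations and identify $\m=[\Ee_2,\Ee_2]$ as the unique $2$-dimensional abelian ideal, hence $\varphi$-invariant; (2) derive $J_0^{-1}TJ_0=\lambda T$ with $\lambda=\pm1$ and conclude $T\in\mathrm{CO}_2$; (3) check that every such pair $(T,\lambda,\w)$ arises from conjugation by a similarity transformation, so $\Aut(\Ee_2)\subset\CE_2$, and combine with the inclusions preceding the lemma. The main obstacle, though a mild one, is step (2): one must be careful that the constraint genuinely cuts $\GL_2(\R)$ down to $\mathrm{CO}_2$ and not something larger, and that the sign of $\lambda$ correctly matches the two components of $\mathrm{O}_2$ — this is where the identity $J_0^{-1}TJ_0=\lambda T$, read componentwise, does all the work, since a $2\times2$ matrix commuting with $J_0$ is conformal-rotational while one anticommuting with $J_0$ is conformal-reflectional. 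Everything else is routine bookkeeping with the explicit matrices \eqref{eq:E}--\eqref{eq:Ee}.
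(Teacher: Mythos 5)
Your proof is correct, and it arrives at exactly the classification the paper obtains, but the central computation is organized differently. The paper likewise reduces to the reverse inclusion and first notes that any automorphism preserves the plane $c=0$ (it invokes uniqueness of the $2$-dimensional ideal, whereas you use the sharper and automatic fact that this plane is the derived algebra $[\Ee_2,\Ee_2]$); it then writes $\phi$ as a general block-triangular matrix $(a_{ij})$ and grinds the commutation relations into four scalar equations $a_{11}=a_{22}a_{33}$, $a_{22}=a_{11}a_{33}$, $a_{12}=-a_{21}a_{33}$, $a_{21}=-a_{12}a_{33}$, deducing $a_{33}=\pm1$ only after a short nondegeneracy argument. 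You package all of this into the single operator identity $TJ_0=\lambda J_0T$ with $J_0=\mathrm{ad}_C|_{\mathfrak m}$, get $\lambda=\pm1$ in one line by taking determinants, and identify the commutant (resp.\ anticommutant) of $J_0$ in $\GL_2(\R)$ with $\mathrm{CO}_2^{+}$ (resp.\ $\mathrm{CO}_2^{-}$). This is cleaner and more explanatory: the conformal group appears precisely because $\ad_C$ is a complex structure on the translation ideal, and the sign of $\lambda$ is forced to match the component of $\mathrm{O}_2$. Your last step --- verifying that every triple $(T,\lambda,m)$ with $T\in\mathrm{CO}_2$, $\lambda=\sign(\det T)$ and $m\in\mathfrak{m}$ arbitrary really is induced by conjugation by an element of $\CE_2$ --- is the same matching the paper does against the matrices of equation \eqref{eq:aut}; it must be written out, but it is the routine computation you flag, and the dimension count ($\mathrm{CO}_2\times\R^2$, four-dimensional with two components) confirms nothing is missed.
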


\begin{proof}
One calculates that the inclusion $\CE_2\subset  \Aut(\Ee_2)$ is given, with respect to the basis $A,B,C$ of $\Ee_2$ dual to $a,b,c$, by the matrices 
\be\label{eq:aut}
(\w,T)\mapsto\left(\begin{array}{cc}
T&-\epsilon i\w\\
0&\epsilon
\end{array}
\right),  \quad  T\in \rm{CO}_2, \ \w\in\R^2, 
\ee
where $\epsilon=\pm 1$ is the sign of $\det(T)$ and $i:(a,b)\mapsto (-b,a).$ To show that the map $\CE_2\to \Aut(\Ee_2)$ of equation \eqref{eq:aut}  is surjective, let $\phi\in\Aut(\Ee_2)$ and observe that $\phi$  must preserve the subspace $c=0$, since it is the unique 2-dimensional ideal of $\Ee_2$. Thus 
$\phi$ has the form
$$\phi=\left(\begin{array}{rrr}a_{11}&a_{12}&a_{13}\\ a_{21}&a_{22}&a_{23}\\ 0&0&a_{33}\end{array}\right)$$
with respect to the basis $A,B,C$ of $\Ee_2$ dual to $a,b,c$. 
Next, using the commutation relations 
\begin{equation}\label{eq:com}
[A,B]=0, \ [A,C]=-B, \ [B,C]=A.
\ee 
 we get
$$a_{11}=a_{22}a_{33}, \ a_{22}=a_{11}a_{33},\ a_{12}=-a_{21}a_{33},\ a_{21}=-a_{12}a_{33}.
$$
From the first two equations we get $a_{11}=a_{11}(a_{33})^2, $ and from the last two
 $a_{12}=a_{12}(a_{33})^2.$ We cannot have $a_{11}=a_{12}=0$, else $\det(\phi)=
 (a_{11}a_{22}-a_{12}a_{21})a_{33}=0.$ It follows that $a_{33}=\pm 1$. If $a_{33}=1$ then we get from the above 4 equations $a_{22}=a_{11}, a_{12}=-a_{21}$, hence 
 the top left $2\times 2 $ block of $\phi$ is in $\mathrm{CO}_2^+$  (an orientation preserving linear similarity). If $a_{33}=-1$ then we get  $a_{22}=-a_{11}, a_{12}=a_{21}$, hence the top left $2\times 2 $ block of $\phi$ is in $\mathrm{CO}_2^-$  (an orientation reversing linear similarity). These are exactly the matrices of equation \eqref{eq:aut}. 
 \end{proof}

\begin{prop}Let $V\subset T_\C \E$ be the left-invariant line bundle whose value at $e\in \E$ is spanned by 
$$L=\left(\begin{array}{ccc}0&-i&1\\ i&0&0\\ 0&0&0\end{array}\right)\in (\Ee_2)_\C.$$
Then 

\benumm
\item Every   left-invariant CR structure on $\E$ is CR equivalent to $V$ by $\Aut(\E)$. 
\item $V$ is an aspherical left-invariant CR structure on $\E$. 
\item $V$ is  realized in $\P((\Ee_2)_\C)=\CP^2$ by the adjoint orbit of $[L]$. This is CR equivalent to  the real hypersurface $[\Re(z_1)]^2+[\Re(z_2)]^2=1$ in $\C^2$. 

\end{enumerate}
\end{prop}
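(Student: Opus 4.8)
The plan is to follow exactly the template already established for $\SLt$, $\SUt$, and the Heisenberg group: (i) reduce the classification of left-invariant CR structures to the study of $\Aut(\E)$-orbits in $\P((\Ee_2)_\C)$, using the normal form description of $\Aut(\E)$ just proved; (ii) verify sphericity or asphericity via Proposition \ref{prop:CRc} by computing a well-adapted coframe from the Maurer--Cartan form; (iii) produce the realization via Proposition \ref{prop:real} (the adjoint orbit) and then identify that orbit explicitly as an affine real hypersurface in $\C^2$.

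For part (a), first note from the commutation relations \eqref{eq:com} that $c=0$ is the unique $2$-dimensional ideal, and the adjoint action fixes it, so a regular (non-degenerate) $D_e$ cannot be any subalgebra; in particular, I would check which $2$-planes are subalgebras and exclude them. Then, using the explicit form \eqref{eq:aut} of $\Aut(\E)$ acting on $\Ee_2$ — translations $\w$ together with $\mathrm{CO}_2$ on the $(A,B)$-plane — I would show the action is transitive on admissible contact planes $D_e$ and that the stabilizer of a chosen $D_e$ (say $D_e=\{c=0\}$, i.e.\ $\Span\{A,B\}$, or whatever turns out admissible — one should double-check that $\{c=0\}$ is \emph{not} the ideal; if it is, pick the plane containing $C$ transverse appropriately) acts transitively on the almost complex structures on $D_e$, up to scaling and sign. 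This pins down a single orbit, so every left-invariant CR structure is $\Aut(\E)$-equivalent to the one spanned by the given $L$. The one subtlety to watch: since $\mathfrak{z}$-type arguments differ here (the ideal is $2$-dimensional, not the center), I would carry out the transitivity of $\mathrm{CO}_2\ltimes\R^2$ on complex structures carefully, likely by writing a general $(0,1)$ vector $L=X+iJX$ and moving it to the stated normal form.

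For part (b), I would take the left-invariant Maurer--Cartan form $\Theta=g^{-1}\d g$ valued in $\Ee_2$ as in \eqref{eq:Ee}, read off $\d a=-c\wedge b$-type structure equations (more precisely $\d a = c\wedge b$, $\d b=-c\wedge a$, $\d c=0$, up to signs I would fix from \eqref{eq:com}), build an adapted coframe $(\phi,\phi_1)$ annihilating $L$, rescale to a well-adapted one satisfying $\d\phi=i\phi_1\wedge\bar\phi_1$, extract the constants $a,b,c$ in \eqref{eq:str}, and apply the criterion $c(2|a|^2+9ib)=0$; here I expect $c\neq 0$ and $2|a|^2+9ib\neq 0$, giving asphericity. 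For part (c), Proposition \ref{prop:real} gives that $\mu:g\mapsto[\Ad_g L]$ is a local realization provided $L$ has trivial centralizer in $(\Ee_2)_\C$, which I would verify directly from \eqref{eq:com}; then I would compute $\Ad_g L$ explicitly for $g$ as in \eqref{eq:E}, pass to the affine chart $[z_1:z_2:1]$, and simplify the resulting image to the stated equation $[\Re(z_1)]^2+[\Re(z_2)]^2=1$ — the key being to choose coordinates on the chart so that the rotation $R\in\SO_2$ acts as rotation of $(\Re z_1,\Re z_2)$ and the translations $\w$ act on the imaginary parts.

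The main obstacle I anticipate is part (c): unwinding the adjoint orbit into the clean real-hypersurface form requires a careful, slightly clever choice of affine chart and complex coordinates (analogous to the coordinate juggling in Proposition \ref{prop:slt2}(a) and the Heisenberg case), and one must check that $\mu$ is an embedding — or at least identify its degree onto the orbit — rather than merely an immersion, since unlike the $\SUt$/$\SLt$ adjoint orbits the stabilizer computation here (the stabilizer of $[L]$ in $\E$) must be shown to be trivial for the claimed hypersurface identification to be a genuine CR equivalence rather than a finite cover. Parts (a) and (b) I expect to be routine linear algebra and Maurer--Cartan bookkeeping once the sign conventions in \eqref{eq:com} are fixed.
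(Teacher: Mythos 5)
Your plan matches the paper's proof in all three parts: (a) exclude the unique degenerate plane $\{c=0\}$ (which is indeed the abelian ideal $\Span\{A,B\}$ — the admissible normal form is $D_e=\{b=0\}=\Span\{A,C\}$, as your hedge anticipates) and use transitivity of the stabilizer on almost complex structures; (b) the Maurer–Cartan computation yields a well-adapted coframe with $a=0$, $b=i/2$, $c=-i/2$, so the criterion of Proposition \ref{prop:CRc} gives asphericity; (c) the paper likewise checks the stabilizer of $[L]$ in $\E$ is trivial and uses the affine chart $c\neq 0$ with coordinates $z_1=a/c$, $z_2=b/c$ to obtain $[\Re(z_1)]^2+[\Re(z_2)]^2=1$. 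No essential difference in route.
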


\begin{proof}
(a) Let $A,B,C$ the basis of $\Ee_2$ dual to $a,b,c$. Then $L=A+iC$, so  $D_e=\Span\{A,C\}=\{b=0\}.$ The plane $c=0$  is a subalgebra of $\Ee_2$, so gives  a degenerate CR structure. By equation \eqref{eq:aut}, 
every other plane can be mapped by  $\Aut(\E)$ to $D_e$. The subgroup of $\Aut(\E)$ preserving $D_e$ acts on $D_e$, with respect to the basis $A,C$, by the  matrices 
$$\left(\begin{array}{cc}r &s\\  0&\epsilon\end{array}\right), \quad r\in\R^*, \ s\in \R, \ \epsilon=\pm1.$$
 One can then  show that this group acts transitively on the space  of almost complex structures on $D_e$.

\sn (b)   Let $\alpha, \beta, \gamma$ be the  left-invariant 1-forms on $E$ whose value at $e$ is $a,b,c$ (respectively).  Then
$$\Theta=\left(\begin{array}{ccc}0&-\gamma&\alpha\\ \gamma&0&\beta\\ 0&0&0\end{array}\right)$$
is the left-invariant Maurer-Cartan form on $E$, satisfying $\d\Theta=-\Theta\wedge\Theta$,  from which we get 
\be\label{eq:stre}
\d \alpha=-\beta\wedge\gamma, \  \d\beta=\alpha\wedge\gamma, \ \d\gamma=0. 
\ee 

A coframe $(\phi, \phi_1)$ adapted to $V$ (i.e.  $\phi(L)=\phi_1(L)=0,$ $\bar\phi_1(L)\neq 0$) is
$$\phi=\beta, \ \phi_1={1\over\sqrt{2}}\left(\alpha+i\gamma\right).$$ 
Using  equations \eqref{eq:stre}, we find 
$$\d\phi=i\phi_1\wedge\bar\phi_1, \quad \d\phi_1={i\over 2}\phi\wedge\phi_1-{i\over 2}\phi\wedge\bar\phi_1,$$
Thus $(\phi, \phi_1)$ is well-adapted. By Proposition \ref{prop:CRc}, the structure is aspherical. 

\sn(c) Using Proposition \ref{prop:real}, this amount to showing that the stabilizer of $[L]$ in $\E$ is trivial. This is a simple calculation using formula \eqref{eq:aut}, with $L=A+iC$  and $T\in\SO_2,$ $\epsilon=1$.  The $\E$-orbit of $[L]$ in $\P((\Ee_2)_\C)$ is contained in the affine chart $c\neq 0$. Using  the coordinates $z_1=a/c, z_2=b/c$ in this chart, the equation for the orbit  is $[\Re(z_1)]^2+[\Re(z_2)]^2=1.$ 
\end{proof}

\begin{rmrk} In Cartan's classification \cite[p.~70]{Ca1}, the left-invariant aspherical structure on $E_2$ is item $3^\circ$(H) of the second table, with $m=0$.  
\end{rmrk} 

\appendix

\section{The Cartan equivalence method}

We state the main result of  \'E.~Cartan's method of equivalence, as implemented for  CR geometry in  \cite{Ca1},   and apply it to  left-invariant CR structures on Lie groups.  We follow mostly the 
 notation and terminology of \cite{Ja}.  
  
The equivalence method associates canonically  to  each CR 3-manifold $M$ an $H$-principal bundle $B\to M$, where $H\subset \PUto=\SUto/\Z_3$ is the stabilizer of a point in $S^3\subset \CP^2=\P(\C^{2,1})$ (a 5-dimensional parabolic subgroup). Furthermore, $B$ is  equipped  with a certain 1-form  $\Theta:TB\to \suto$, called the  {\em Cartan connection form}, whose eight components are linearly independent at each point, defining a coframing on $B$  (an `$e$-structure').
In the special case of $M=S^3$, equipped with its standard spherical structure, $B$ can be identified with $\PUto$ and $\Theta$ with  the left-invariant Maurer-Cartan form on this group. The {\em curvature} of $\Theta$ is the $\suto$-valued 2-form $\Omega:=\d\Theta+\Theta\wedge\Theta$. It vanishes if and only if $M$ is spherical and  is the basic local invariant of  CR geometry, much like the Riemann curvature tensor in Riemannian geometry. The construction is canonical  in the sense that each  CR equivalence $f:M\to M'$ lifts uniquely to a bundle map  $\tilde f:B\to B'$, preserving the coframing, i.e.  $\tilde f^*\Theta'=\Theta.$ In fact, $B$ is an $H$-reduction  of the second order frame bundle of $M$ (the 2-jets of germs of local diffeomorphisms $(\R^3, 0)\to M$), and  $\tilde f$ is the restriction of the 2-jet of $f$ to $B$.

More concretely, fix a pseudo-hermitian form  on $\C^3$ of signature  $(2,1)$, 
 $(z_1,z_2,z_3)\mapsto |z_2|^2+i(z_3\bar z_1-z_1\bar z_3)$, and let $\SUto\subset \SL_3(\C)$ be the subgroup preserving this hermitian form. A short calculation shows that its Lie algebra $\suto$ consists of matrices of the form 
\be\label{eq:suto}
\left(\begin{array}{ccc}
{1\over 3}(\bar c_2+2c_2)&i\bar c_3&-c_4\\
c_1&{1\over 3}(\bar c_2-c_2)&-c_3\\
c&i\bar c_1&-{1\over 3}(c_2+2\bar c_2)
\end{array}
\right),
\ee
 where $c,c_4\in\R$  and $c_1,c_2,c_3\in\C$.  Accordingly,  $\Theta$ decomposes as 
\be\label{eq:cc}
\Theta=
\left(\begin{array}{ccc}
{1\over 3}(\bar \theta_2+2\theta_2)&i\bar \theta_3&-\theta_4\\
\theta_1&{1\over 3}(\bar \theta_2-\theta_2)&-\theta_3\\
\theta&i\bar \theta_1&-{1\over 3}(\theta_2+2\bar \theta_2)
\end{array}
\right),
\ee
where $\theta,\theta_4$ are real-valued  and $\theta_1,\theta_2,\theta_3$ are complex-valued 1-forms on $B$. Let
 $H\subset \PUto$ be the stabilizer of $[1:0:0]\in S^3\subset \CP^2$. Its  Lie algebra $\h\subset\suto$ is given by setting  $c=c_1=0$ in formula \eqref{eq:suto}. 
  In the case of the spherical CR structure on  $S^3$,  where $\Theta$ is the  left-invariant  Maurer-Cartan form on $B=\PUto$,  the Maurer-Cartan equations give
$\Omega=\d\Theta+\Theta\wedge\Theta=0.$ In general, $\Omega$ does not vanish but has  a rather special form.

We  summarize  Cartan's main result of \cite{Ca1}, as presented in \cite{Ja}. We first give a global version, then a local one, using adapted  coframes. Each has its  advantage. 
\begin{thm}[Cartan's equivalence method, global version] \label{thm:glob}
With each CR 3-manifold  $M$  there is  canonically associated an $H$-principal bundle  $B\to M$ with   Cartan connection $\Theta:TB\to \suto$, satisfying  
\benum
\item ($H$-equivariance)  $R_h^*\Theta=\Ad_{h^{-1}}\Theta$ for all $h\in H$. 

\item  The vertical distribution on $B$ (the tangent spaces to the fibers of $B\to M$) is given by $\theta=\theta_1=0.$

\item ($e$-structure) The eight  components of $\Theta$, namely 
$\theta,$ $ \Re(\theta_1),$ $\Im(\theta_1),$ $ \Re(\theta_2),$ $\Im(\theta_2),$ $\Re(\theta_3),\Im(\theta_3),$ $ \theta_4$,   are pointwise linearly independent, defining  a coframing on $B$.

\item (The CR structure equations) There exist functions $ R,S:B\to\C$ such that 
$$\Omega=\d\Theta+\Theta\wedge\Theta=\left(\begin{array}{ccc}
0&-i \bar R &S\\
0&0&0\\
0&0&0
\end{array}
\right)\theta\wedge\theta_1
+
\left(\begin{array}{ccc}
0&0&\bar S\\
0&0&R\\
0&0&0
\end{array}
\right)\theta\wedge\bar\theta_1.
$$
Explicitly, 
\begin{align}\label{eq:cr1}
\begin{split}
\d\theta&=i\theta_1 \wedge\bar \theta_1-\theta \wedge(\theta_2+\bar\theta_2),  \\
\d\theta_1&=-\theta_1 \wedge\theta_2-\theta \wedge\theta_3,  \\
\d\theta_2&= 2i\,\theta_1\wedge\bar\theta_3+i\,\bar\theta_1\wedge\theta_3-\theta\wedge\theta_4,  \\
\d\theta_3&= -\theta_1\wedge\theta_4  -\bar\theta_2\wedge\theta_3-R\,\theta\wedge\bar\theta_1,\\
\d\theta_4&= i\,\theta_3\wedge\bar\theta_3-(\theta_2+\bar\theta_2)\theta_4+(S\,\theta_1+\bar S\,\bar\theta_1)\wedge\theta. 
\end{split}
\end{align}

\item (Spherical structures) $M$  is spherical if and only if $R\equiv 0$, in which case $S\equiv 0$ as well, hence  $\Omega\equiv 0.$ 

\item (Aspherical structures) If $M$ is aspherical, i.e.  $R$ is non-vanishing, then  $B_1=\{R=1\}\subset B$ is a $\Z_2$-principal  subbundle of $B$. The restriction of $(\theta_, \theta_1)$ to $B_1$ defines a coframing on it. 
 
 \item Any local CR diffeomorphism of CR manifolds $f:M\to M'$ lifts uniquely to an $H$-bundle map $\tilde f:B\to B'$ with $\tilde f^*\Theta'=\Theta.$ 
\end{enumerate}
\end{thm}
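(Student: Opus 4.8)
\mn\textbf{Proof strategy.}
The plan is to carry out the Cartan equivalence method for CR $3$-manifolds — a sequence of canonical reductions of a coframe bundle followed by a prolongation — and then to obtain the functoriality statement (g) for free from the canonicity of the construction. Much of the set-up is already in \S\ref{sec:homog}: by the discussion after Definition~\ref{def:adap}, every CR $3$-manifold admits, locally, \wa coframes $(\phi,\phi_1)$, for which \eqref{eq:str} holds with some functions $a,b,c$, and any two of them differ by an element of the $2$-real-dimensional group of rescalings $\lambda\in\C^\ast$ (the shift of $\phi_1$ along $\phi$ being then determined). So I would first assemble the \wa coframes over $M$ into a $\C^\ast$-principal bundle $\cB_1\to M$, on which $\phi,\phi_1$ become tautological $1$-forms and $a,b,c$ become functions, differentiate $\phi_1$, and record how $a,b,c$ transform under $\C^\ast$; the contact/Levi-nondegeneracy hypothesis enters precisely here, since it is what makes the normalization $\d\phi=i\phi_1\wedge\bar\phi_1$ possible to begin with.

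The second step is the prolongation. One introduces a pseudo-connection $1$-form $\theta_2$ for the $\C^\ast$-action (well defined up to semibasic terms), absorbs as much of the torsion in $\d\phi_1$ into it as possible, finds that an essential residue survives, and is thereby forced to adjoin the variables parametrizing the admissible choices of $\theta_2$ — and then, in two further stages, a complex form $\theta_3$ and a real form $\theta_4$. The bookkeeping of these successive absorptions — the computational heart of the method, which Cartan works through over many pages of \cite{Ca1} and which is done in modern notation in \cite{Ja} — is arranged so that one arrives at an $8$-dimensional manifold $B\to M$ carrying the $\suto$-valued coframing $\Theta$ of \eqref{eq:cc}, the fibers of $B\to M$ being the orbits of the parabolic subgroup $H\subset\PUto=\SUto/\Z_3$ fixing $[1:0:0]\in S^3$, whose Lie algebra $\h$ is the locus $c=c_1=0$ of \eqref{eq:suto}. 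Items (a)--(c) then hold by construction: the $H$-equivariance $R_h^\ast\Theta=\Ad_{h^{-1}}\Theta$ is the transformation law of the prolonged coframe, $\{\theta=\theta_1=0\}$ is the vertical distribution by definition, and linear independence of the eight components of $\Theta$ is the statement that $\Theta$ is a coframing.

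Since all absorbable torsion has been absorbed, $\Omega=\d\Theta+\Theta\wedge\Theta$ is forced into the restricted semibasic shape displayed in (d) — equivalently, $\d\Theta$ is given by \eqref{eq:cr1} — with only two structure functions $R,S:B\to\C$. Applying $\d$ to \eqref{eq:cr1} (the Bianchi identity $\d\Omega=\Omega\wedge\Theta-\Theta\wedge\Omega$) then expresses $S$ as a universal differential expression in $R$; in particular $R\equiv0$ forces $S\equiv0$ and hence $\Omega\equiv0$, which is (e). For (f), $R$ transforms under $H$ through a character, so when $R$ is nowhere zero the normalization $R=1$, together with the normalizations supplied by the covariant derivatives of $R$, reduces $B$ all the way to a $\Z_2$-cover $B_1\to M$ on which $(\theta,\theta_1)$ restricts to a coframing. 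The flat model is now forced: for $M=S^3$ with its standard CR structure one takes $B=\PUto$ with $\Theta$ the left-invariant Maurer-Cartan form, since the Maurer-Cartan equation $\d\Theta+\Theta\wedge\Theta=0$ is precisely \eqref{eq:cr1} with $R=S=0$; conversely, any $(B,\Theta)$ with $R\equiv0$ satisfies $\d\Theta+\Theta\wedge\Theta=0$, so on a simply connected open set $\Theta$ is the pullback of the Maurer-Cartan form of $\SUto$ under a local diffeomorphism into the group. This yields both the uniqueness of the flat model and the equivalence in (e), that $M$ is spherical if and only if $R\equiv0$.

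Finally, (g) is formal. A local CR diffeomorphism $f:M\to M'$ carries \wa coframes on $M'$ to \wa coframes on $M$ by pullback, hence induces a $\C^\ast$-equivariant bundle map $\tilde f_1:\cB_1\to\cB_1'$ covering $f$ and pulling the tautological forms of $\cB_1'$ back to those of $\cB_1$. Every later reduction and the prolongation are cut out by CR-invariant conditions on the tautological forms and their exterior derivatives, all preserved by $\tilde f_1^\ast$, so $\tilde f_1$ restricts and then prolongs, stage by stage, to a unique $H$-bundle map $\tilde f:B\to B'$ with $\tilde f^\ast\Theta'=\Theta$; concretely $\tilde f$ is the restriction to $B$ of the $2$-jet of $f$, which is why Proposition~\ref{prop:cont} holds. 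Uniqueness at each stage is automatic, since the pullback of coframes is canonical and no arbitrary choices enter the reductions or the prolongation. The step I expect to be the real obstacle is the one hidden in ``the bookkeeping is arranged so that'': carrying out the successive torsion absorptions and normalizations explicitly and verifying that one lands on exactly \eqref{eq:cr1}, with a single essential curvature function $R$ and $S$ a differential consequence of it. As a consistency check, specializing the resulting formula for $R$ to a left-invariant \wa coframe should reproduce the criterion of Proposition~\ref{prop:CRc}.
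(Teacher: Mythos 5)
The paper does not prove Theorem~\ref{thm:glob} at all: immediately after the local version it states ``Proofs of these theorems are found in Chap.~6 and Chap.~7 of \cite{Ja}'' and moves on. So the honest comparison is between your outline and the proof in the cited reference. Your architecture is the correct and standard one, and it is the same one \cite{Ja} (and Cartan) follow: assemble the \wa coframes into a $\C^\ast$-bundle, prolong and absorb torsion until an $8$-dimensional bundle with an $\suto$-valued coframing emerges, read off (a)--(d) from the construction, get (e) from the Bianchi identity plus the Maurer--Cartan characterization of the flat model, and get (g) from canonicity. In that sense the proposal is on target.

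As a proof, however, it has a genuine gap, and it is exactly the one you flag yourself: everything in items (d)--(f) lives or dies on the ``bookkeeping.'' Nothing in the outline establishes that the absorbed structure equations come out as \eqref{eq:cr1} with precisely two surviving functions $R,S$, nor that $\d\Omega=\Omega\wedge\Theta-\Theta\wedge\Omega$ forces $S$ to be a differential consequence of $R$ (so that $R\equiv0\Rightarrow S\equiv0$). A more specific warning about (f): you assert that $R$ transforms under $H$ ``through a character,'' but a character of the $5$-dimensional group $H$ takes values in $\C^\ast$, so the level set $R=1$ of a character-weighted function would cut each fiber down to a $3$-dimensional subgroup, not to two points. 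The fact that $B_1=\{R=1\}$ is a $\Z_2$-subbundle requires knowing how $R$ varies along \emph{all} of $H$ (the unipotent directions mix $R$ with the other curvature components under $\Ad_{h^{-1}}$), and this is precisely the computation you have deferred; compare the explicit two-element normalization $\lambda=\pm(|r|^{-1/2}\bar r)^{1/2}$, $\mu=i\,L(u)/u$ in item (d) of the local version. So the proposal should be read as a correct roadmap to the proof in \cite{Ja}, not as a self-contained proof; the paper itself makes the same choice by citing rather than proving.
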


Here is a reformulation of the last theorem using {\em adapted coframes}. Note that such  coframes always exists, locally, for any CR manifold. See Definition \ref{def:adap} and the paragraph following it. 
\begin{thm}[Cartan's equivalence method, local version]
Let $M$ be a CR 3-manifold with an adapted coframe $(\phi, \phi_1)$, satisfying $\d \phi=i\phi_1\wedge\bar\phi_1 \  (\mod \phi)$.  Then  
\benumm
\item There exist on $M$ unique 
complex 1-forms $\phi_2,\phi_3$,  a real 1-form $\phi_4$ and complex functions $r,s$ such that 
\begin{align}\label{eq:cr2}
\begin{split}
&\d\phi=i\phi_1 \wedge\bar \phi_1-\phi \wedge(\phi_2+\bar\phi_2), \\
&\d\phi_1=-\phi_1 \wedge\phi_2-\phi \wedge\phi_3,  \\
&\d\phi_2= 2i\,\phi_1\wedge\bar\phi_3+i\,\bar\phi_1\wedge\phi_3-\phi\wedge\phi_4,  \\
&\d\phi_3= -\phi_1\wedge\phi_4  -\bar\phi_2\wedge\phi_3-r\,\phi\wedge\bar\phi_1,\\
&\d\phi_4= i\,\phi_3\wedge\bar\phi_3+(s\,\phi_1+\bar s\,\bar\phi_1)\wedge\phi. 
\end{split}
\end{align}
\item If $ (\phi,\phi_1)$  is well-adapted, i.e.  $\d\phi=i\phi_1\wedge\bar\phi_1,$ then $\phi_2$ is imaginary, $\phi_2+\bar\phi_2=0.$

\sn \item $M$ is spherical if and only if $r\equiv 0$, in which case $s\equiv 0$ as well. 

\sn \item If $M$ is aspherical, i.e.  $r$ is non-vanishing,   then there exist  on $M$  exactly two   \wa    coframes $(\tilde \phi, \tilde\phi_1)$ for which  $r=1$ in equations \eqref{eq:cr2},  given by $\tilde\phi=|\lambda|^2\phi,$ $\tilde \phi_1=\lambda(\phi+\mu\phi_1)$,  where $\lambda, \mu$ are complex functions given as follows: let $L$ be the complex vector field of type $(0,1)$ defined by $\theta(L)=\theta_1(L)=0,$ $\bar\theta_1(L)=1$, then $\lambda=\pm(|r|^{-1/2}\bar r)^{1/2}, $ $ \mu=i\,L(u)/u$  and  $ u=|\lambda|^2=|r|^{1/2}.$ 

\sn \item The previous items are related to  Theorem \ref{thm:glob} as follows:  
there exists a unique  section $\sigma:M\to B$ such that $\phi=\sigma^*\theta$ and $\phi_1=\sigma^*\theta_1.$ Furthermore, $\phi_i=\sigma^*\theta_i, $ $ i=2,3,4$,  $r=R\circ\sigma$ and $s=S\circ\sigma.$ If $M$ is aspherical then  $B_1$ is trivialized by the two sections corresponding to the two \wa coframes of the previous item. 
\end{enumerate}
\end{thm}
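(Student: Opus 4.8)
The local and global versions are essentially the same statement, joined by the section $\sigma$ of item~(e), so the plan is to deduce the local one from Theorem~\ref{thm:glob}; equivalently one runs Cartan's absorption-of-torsion algorithm directly on $(\phi,\phi_1)$, successively solving for $\phi_2,\phi_3,\phi_4$ and the torsion functions $r,s$, and the two routes give the same data. I will follow the section route. First I would produce the section. By the $H$-equivariance in Theorem~\ref{thm:glob}(a) the sub-bundle of $T^{*}B$ spanned by $\theta,\theta_1,\bar\theta_1$ is $H$-invariant, so $H$ acts on the fibrewise values of this triple; reading the $\Ad$-action off the matrix forms \eqref{eq:suto}--\eqref{eq:cc} one checks that $\theta$ is only rescaled by a positive real and $\theta_1$ is rescaled by a nonzero complex number and shifted by a multiple of $\theta$ --- precisely the freedom among adapted coframes recorded after Definition~\ref{def:adap} --- and that the $H$-orbit of any admissible value is the whole set of adapted coframes with $\d\phi\equiv i\phi_1\wedge\bar\phi_1\ (\mod\phi)$. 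Hence over each point of $M$ there is a point of $B$ at which the semibasic forms reproduce $(\phi,\phi_1)$, and picking it smoothly gives a section $\sigma_0\colon M\to B$ with $\sigma_0^{*}\theta=\phi$, $\sigma_0^{*}\theta_1=\phi_1$.

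Now set $\phi_i:=\sigma_0^{*}\theta_i$ ($i=2,3,4$) and $r:=R\circ\sigma_0$, $s:=S\circ\sigma_0$; pulling back \eqref{eq:cr1} along $\sigma_0$ yields the structure equations \eqref{eq:cr2} (up to normalising the last one), proving existence in~(a). For uniqueness, any two sections of $B\to M$ realising $(\phi,\phi_1)$ differ by a gauge valued in the one-parameter subgroup $H_0\subset H$ acting trivially on $(\theta,\theta_1)$, and a short computation with \eqref{eq:cr1} shows that under such a gauge $\phi_2$ changes by a multiple of $\phi$, nonzero for a nontrivial gauge; hence requiring the whole normalised coframe to be reproduced pins the section down, and dually forces the solution of \eqref{eq:cr2} to be unique --- a fact one can also see by comparing two solutions term by term in the basis $\phi\wedge\phi_1,\ \phi\wedge\bar\phi_1,\ \phi_1\wedge\bar\phi_1$, which is exactly the absorption-of-torsion bookkeeping. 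That no residual freedom survives, i.e.\ that Cartan's normalisation terminates, is the finiteness of the CR equivalence problem (the prolongation stabilising at the $8$-dimensional $\suto$); I expect this to be the real obstacle, and the place where one either grinds through the computation or cites \cite{Ja}.

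Item~(b) is then immediate: if $(\phi,\phi_1)$ is well-adapted the first equation of \eqref{eq:cr2} gives $\phi\wedge(\phi_2+\bar\phi_2)=0$, and since $\phi_2$ was already fixed in~(a) the only possibility left is $\phi_2+\bar\phi_2=0$. For~(c) I would use that the vanishing of $R$ --- hence of $S$ --- is an $H$-invariant condition on $B$, being the pointwise obstruction to local flatness, so it descends to $M$; since $\sigma_0$ meets every fibre, $r\equiv0$ iff $R\equiv0$ iff, by Theorem~\ref{thm:glob}(e), $M$ is spherical, and then $s\equiv0$ likewise.

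For~(d), in the aspherical case $R$ is nowhere zero, so $B_1=\{R=1\}$ is the $\Z_2$-reduction of Theorem~\ref{thm:glob}(f); a well-adapted coframe with $r=1$ is the same as a well-adapted section into $B_1$, and there are exactly two of those, the two points of the $\Z_2$-fibre. To extract the normalising factors I would change the coframe by $\tilde\phi=|\lambda|^{2}\phi$, $\tilde\phi_1=\lambda(\phi+\mu\phi_1)$, read off from \eqref{eq:cr2} how $r$ transforms (its modulus through $|\lambda|$, its argument through $\arg\lambda$) and how the well-adapted condition $\d\tilde\phi=i\tilde\phi_1\wedge\bar{\tilde\phi_1}$ constrains $\mu$, and solve: $u=|\lambda|^{2}=|r|^{1/2}$, $\lambda=\pm(|r|^{-1/2}\bar r)^{1/2}$, $\mu=i\,L(u)/u$. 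Finally~(e) is a summary of the above: the section from~(a) is the unique one reproducing the full normalised coframe, $\phi_i=\sigma^{*}\theta_i$, $r=R\circ\sigma$, $s=S\circ\sigma$ by construction, and in the aspherical case the two well-adapted coframes with $r=1$ are a global trivialisation of the $\Z_2$-bundle $B_1$.
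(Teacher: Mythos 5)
The paper does not prove this theorem at all: immediately after stating the global and local versions it says only that ``Proofs of these theorems are found in Chap.~6 and Chap.~7 of \cite{Ja}.'' Your proposal is therefore not in conflict with the paper's treatment --- you, too, ultimately defer the genuine content (the construction of $B$ and $\Theta$, and the termination of Cartan's normalization) to \cite{Ja} --- and your sketch of how the local version is read off from Theorem \ref{thm:glob} by pulling back along a section is essentially a gloss on item~(e) of the statement itself. Items (b), (c) and the shape of (d) are handled correctly modulo that.

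The one place where your argument is thinner than it reads is the uniqueness step in (a)/(e). The group $H$ is $5$-dimensional while the freedom in adapted coframes with $\d\phi\equiv i\phi_1\wedge\bar\phi_1\ (\mod\phi)$ is only $4$-dimensional, so --- as you correctly note --- there is a one-parameter subgroup $H_0\subset H$ (the $c_4$-, i.e.\ $\theta_4$-, direction in \eqref{eq:suto}) acting trivially on $(\theta,\theta_1)$, and the section is genuinely \emph{not} determined by $\sigma^*\theta=\phi$, $\sigma^*\theta_1=\phi_1$ alone. Under this residual gauge one finds $\phi_2\mapsto\phi_2+t\phi$ with $t$ real, but this change can be reabsorbed into $\phi_3$ and $\phi_4$ while preserving the first two equations of \eqref{eq:cr2}; what actually kills the freedom is the full system --- the absence of the disallowed torsion terms in the $\d\phi_2$ and $\d\phi_3$ equations together with the reality of $\phi_4$ --- not merely the observation that $\phi_2$ moves. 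Your sentence ``requiring the whole normalised coframe to be reproduced pins the section down'' is circular as written, since the normalised coframe is precisely what is being constructed; the honest statement is that \eqref{eq:cr2} admits a unique solution $(\phi_2,\phi_3,\phi_4,r,s)$ for a given $(\phi,\phi_1)$, which is exactly the computation you (and the paper) outsource to \cite{Ja}. Note that Proposition \ref{prop:CRc:Appendix} of the Appendix carries out precisely this solve, explicitly, in the constant-structure-function case, which is the only case the paper actually uses.
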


Proofs of these theorems are found in Chap.~6 and Chap.~7 of \cite{Ja}. 
Note that the  function $r$ in equations \eqref{eq:cr2}, sometimes called  `the Cartan CR curvature', is a {\em relative invariant} of the CR structure: only its vanishing is independent of the coframe.  Put differently, due to the $H$-equivariance of $\Theta$, and hence of $\Omega$, the function $R:B\to \C$ of Theorem \ref{thm:glob} varies non-trivially along any of the fibers of $B\to M$, unless it vanishes along it.

\begin{cor}\label{cor:aut} For any connected CR 3-manifold, 
\benumm
\item   $\Aut _\CR(M)$  and $\aut_\CR(M)$  are   a Lie group and a  Lie algebra (respectively) of dimension at most 8. The maximum dimension 8 is obtained if and only if $M$ is spherical. 

\item If $M$ is aspherical then $\Aut_\CR(M)$ and $\aut_\CR(M)$
have  dimension at most 3. 
\item $\Aut_\CR(S^3)=\PUto$.
\item  
If $U$ and $V$ are open connected subsets of $S^3$ and $f:U\to  V$ is a CR diffeomorphism then $f$ is the restriction to $U$ of some element in $\PUto$.

\end{enumerate}
\end{cor}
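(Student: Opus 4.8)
The plan is to read everything off the Cartan equivalence method of Theorem~\ref{thm:glob}, combined with the rigidity of coframings already invoked in the proof of Proposition~\ref{prop:cont}: a coframe-preserving diffeomorphism of a connected coframed manifold is determined by its value at one point, and likewise an infinitesimal automorphism (a vector field whose flow preserves the coframing) is determined by its value at one point. Note first that $B$ is connected whenever $M$ is, since the fibre $H$ (a parabolic subgroup of the connected group $\PUto$) is connected. By item~(g) of Theorem~\ref{thm:glob} the lift $f\mapsto\tilde f$ embeds $\Aut_\CR(M)$ into the group $\Aut(B,\Theta)$ of coframe-preserving diffeomorphisms of $B$, with image exactly the subgroup commuting with the principal $H$-action (these are the maps that descend to $M$); similarly $\aut_\CR(M)$ embeds into the $H$-invariant part of $\aut(B,\Theta)$. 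Since $\Aut(B,\Theta)$ is, by the classical theorem on absolute parallelisms, a Lie group acting freely on $B$ of dimension $\le\dim B=8$, and the $H$-commuting (resp.\ $H$-invariance) condition is closed, $\Aut_\CR(M)$ is a Lie group and $\aut_\CR(M)$ a Lie algebra of dimension $\le 8$, proving the first half of~(a).

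For the equality in~(a): if $\dim\Aut_\CR(M)=8$ then its identity component acts freely on the $8$-manifold $B$, hence (orbits being open, $B$ connected) transitively; since it preserves $\Theta$ it preserves the curvature $\Omega$, in particular the function $R\colon B\to\C$, so $R$ is constant. But by the $H$-equivariance of $\Theta$, hence of $\Omega$, the function $R$ is a \emph{relative} invariant: it scales non-trivially along each fibre of $B\to M$ unless it vanishes there. A constant $R$ is therefore identically zero, so $M$ is spherical by item~(e). Conversely, for spherical $M$ we have $B\cong\PUto$ with $\Theta$ the Maurer--Cartan form, so $\Aut(B,\Theta)$ is the group of left translations (a coframe-preserving map fixing $e$ is the identity), which is $H$-equivariant because the principal $H$-action is by right translations; hence $\dim\Aut_\CR(M)=8$. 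For~(b), asphericity means $R$ is non-vanishing, so by item~(f) the locus $B_1=\{R=1\}$ is a $\Z_2$-subbundle of dimension $3$ carrying the restricted coframing $(\theta,\theta_1)|_{B_1}$; every $\tilde f$ preserves $R$, hence $B_1$, and restricts to a coframe-preserving map there, so the same embedding argument gives $\dim\Aut_\CR(M)\le 3$ and $\dim\aut_\CR(M)\le 3$.

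For~(c): the standard structure on $S^3$ is spherical, so $B$ is identified with $\PUto$ and $\Theta$ with the Maurer--Cartan form. The projective action of $\PUto$ on $S^3$ gives $\PUto\subseteq\Aut_\CR(S^3)$, and its lift to $B=\PUto$ is left translation, which acts simply transitively. Since $\Aut_\CR(S^3)$ acts freely on $B$, evaluation at a basepoint $b_0$ is injective on $\Aut_\CR(S^3)$ yet already surjective on the subgroup $\PUto$, whence $\Aut_\CR(S^3)=\PUto$. Item~(d) follows directly: given a CR diffeomorphism $f\colon U\to V$ of connected open subsets of $S^3$, lift it to the coframe-preserving $\tilde f\colon B|_U\to B|_V$ (here $B|_U$ is connected); pick $p\in B|_U$ and, by~(c) and the free transitive action of $\PUto$ on $B$, the unique $g\in\PUto$ with $\tilde g(p)=\tilde f(p)$; then $\tilde f$ and $\tilde g|_{B|_U}$ are coframe-preserving maps of the connected manifold $B|_U$ agreeing at $p$, hence equal, so $f=g|_U$.

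I expect the only step demanding genuine thought — the rest being bookkeeping with Theorem~\ref{thm:glob}, the rigidity of coframings, and the simple transitivity of $\PUto$ on its own Cartan bundle — to be the converse direction of~(a): recognising that a transitive automorphism group forces the \emph{relative} invariant $R$ to be constant and hence to vanish. One must use precisely the $H$-equivariance of $\Theta$ here, so that ``constant along a fibre'' is incompatible with ``nonzero''.
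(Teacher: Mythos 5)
Your proposal is correct and follows essentially the same route as the paper: embed $\Aut_\CR(M)$ into the coframed bundle $B$ (or $B_1$ in the aspherical case) via the canonical lift, use $H$-equivariance of $\Omega$ to show a fibrewise-constant $R$ must vanish, identify $B$ with $\PUto$ and $\Theta$ with the Maurer--Cartan form for $S^3$, and deduce (d) from the unique-extension property. The only differences are cosmetic: you spell out the converse direction of (a) and the freeness/transitivity bookkeeping in (c) slightly more explicitly than the paper does.
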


\begin{proof} (a) The essential observation is that any local diffeomorphism of  coframed manifolds, preserving the  coframing, is determined, in each connected component of its domain, by its value at a single point in it. This is a consequence of the  uniqueness theorem of solutions to ODEs. It follows that the  group of symmetries of a coframed connected manifold embeds in the manifold itself. This implies, by Theorem \ref{thm:glob} above, item (g), that $\Aut_\CR(M)$ embeds in  $B$, which is 8-dimensional. The same argument applies to $\aut_\CR(M)$, by restricting to open connected subsets of $M$. If $\dim\Aut_\CR(M)=8$, then it acts with open orbits in $B$, hence $R$ is locally constant. In particular, $R$ must be constant along the fibers of $B\to M$. By the $H$-equivariance of $\Omega$ this can happen only if $R$ vanish, which implies that $M$ is spherical, by Theorem \ref{thm:glob}, item (e). 

\sn (b)  If $M$ is aspherical then $\tilde f$ leaves $B_1$ invariant, preserving the coframing on it given by $(\theta, \theta_1)$. Then, as in the previous item,  $\Aut_\CR(M)$ embeds in $B_1$,  hence it is of dimension at most $3=\dim(B_1)$. 

\sn (c) As mentioned above, for $M=S^3$, $B=\PUto$ and $\Theta$ is the left-invariant Maurer-Cartan form. For any $f\in \Aut_\CR(M)$, let $\tilde f(e)=g=ge\in B$. This coincides with the action of $g$ on $\PUto$ by left translations, hence $\tilde f=g$. 

\sn (d) This is  the `unique  extension property' of Proposition \ref{prop:ext}. 
\end{proof}
In general, given a well-adapted coframe $\phi, \phi_1$,  it is  not so  simple to solve equations \eqref{eq:cr2} to find the associated one-forms and the functions $r,s$.  Fortunately, for a left-invariant  CR structure on a Lie group,  one can pick a left-invariant well-adapted coframe and then  it is straightforward to  write down explicitly the solutions   in terms of  $\phi, \phi_1$ and their structure constants.  
\begin{prop}\label{prop:CRc:Appendix}
Let $M$ be a  manifold with a CR structure given by a well-adapted   coframe $\phi, \phi_1$ satisfying
\begin{align}\label{eq:str1}
\begin{split}
\d\phi&=i\phi_1 \wedge\bar\phi_1,\\
\d\phi_1&=a\,\phi_1\wedge\bar \phi_1+b\,\phi\wedge \phi_1+c\,\phi\wedge \bar\phi_1,
\end{split}
\end{align}
for  some complex constants $a,b,c$. Then these constants satisfy  
\be \label{eq:str2}
\bar a c=ab, \ 
b+\bar b=0,
\ee
and 
equations  \eqref{eq:cr2} are satisfied by $r,s,\phi_j=A_j\phi + B_j\phi_1+C_j\bar\phi_1,\ j=2,3,4,$   
given by 
\begin{align*}
& A_2={i|a|^2\over 2}  +{3b\over 4},\
 B_2=\bar a ,\
 C_2=-a,\\
 &A_3={4ia b\over 3}, \
 B_3=\frac{i|a |^2}{2} -{b\over 4},\
 C_3=-c, \\
& A_4={\left| a\right| ^4\over 4} +{1\over 16} |b|^2+\frac{19}{12}  ib | a|^2- | c|^2,\
 B_4=\frac{2 \bar a b}{3} ,\
 C_4=\frac{2  a \bar b}{3}\\
 &  r=ic\left({|a|^2\over 3}+{3ib\over 2}\right),\
 s=\bar a \left(3|b|^2+{2i\over 3} |a|^2 b\right).
\end{align*}
\end{prop}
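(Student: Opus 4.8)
\emph{Overview.} The proposition has two parts: the quadratic relations \eqref{eq:str2} among $a,b,c$, and the claim that the displayed one-forms and functions solve the CR structure equations \eqref{eq:cr2}. I would treat these in turn.

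\emph{The relations on $a,b,c$.} These are precisely the integrability conditions $\d^2\phi=0$ and $\d^2\phi_1=0$. Differentiating the first line of \eqref{eq:str1} and substituting the second line together with its complex conjugate, every contribution cancels except the coefficient of the volume $3$-form $\phi\wedge\phi_1\wedge\bar\phi_1$, leaving $\d^2\phi=i(b+\bar b)\,\phi\wedge\phi_1\wedge\bar\phi_1$; hence $b+\bar b=0$, so $ib$ is real, a fact used throughout. Likewise, differentiating the second line of \eqref{eq:str1} and simplifying with $b+\bar b=0$ gives $\d^2\phi_1=(\bar a c-ab)\,\phi\wedge\phi_1\wedge\bar\phi_1$, whence $\bar a c=ab$. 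These are the only constraints, and one checks at once from the formulas that $A_2+\bar A_2=\tfrac34(b+\bar b)=0$, that $A_4$ is real, and that $B_4=\bar C_4$, so the stated $\phi_2,\phi_3$ are complex one-forms with $\phi_2+\bar\phi_2=0$ and $\phi_4$ is real, in accordance with Theorem \ref{thm:glob} and its local version.

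\emph{Solving \eqref{eq:cr2}.} The local version of Theorem \ref{thm:glob} guarantees that complex one-forms $\phi_2,\phi_3$, a real one-form $\phi_4$, and functions $r,s$ satisfying \eqref{eq:cr2} exist and are unique; so it suffices to check that the given expressions work, after which they must be the canonical ones. Writing $\phi_j=A_j\phi+B_j\phi_1+C_j\bar\phi_1$, I would expand each line of \eqref{eq:cr2} in the basis $\{\phi\wedge\phi_1,\ \phi\wedge\bar\phi_1,\ \phi_1\wedge\bar\phi_1\}$ of $2$-forms — computing the left-hand sides from $\d\phi=i\phi_1\wedge\bar\phi_1$, $\d\phi_1=a\phi_1\wedge\bar\phi_1+b\phi\wedge\phi_1+c\phi\wedge\bar\phi_1$ and their conjugates — and match coefficients. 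The first line reduces to $\phi\wedge(\phi_2+\bar\phi_2)=0$, already settled above. The second line forces $C_3=-c$ and $A_2-B_3=b$, both immediate from the formulas. The third line involves $\phi_4$ only through $B_4,C_4$, so two of its three coefficient equations pin those down while the third is an identity in $a,b$. The fourth line then determines $A_4$ from its $\phi\wedge\phi_1$ coefficient and the curvature $r=ic(|a|^2/3+3ib/2)$ from its $\phi\wedge\bar\phi_1$ coefficient, the remaining coefficient being an identity. The fifth line similarly yields $s=\bar a(3|b|^2+\tfrac{2i}{3}|a|^2b)$, its $\phi\wedge\bar\phi_1$ coefficient being the complex conjugate statement and its $\phi_1\wedge\bar\phi_1$ coefficient a further identity. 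At each step the relations \eqref{eq:str2} are exactly what force the leftover coefficients to match — the consistency of these overdetermined linear systems being guaranteed a priori by the uniqueness statement.

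\emph{Where the work is.} The one genuine difficulty will be bookkeeping: lines three through five of \eqref{eq:cr2}, once expanded, produce long polynomials in $a,\bar a,b,\bar b,c,\bar c$, and collapsing them to the compact forms in the statement requires careful sign tracking and timely use of $\bar b=-b$ and $\bar a c=ab$; there is no conceptual obstacle beyond this. As a byproduct, the formula for $r$ together with the characterisation of sphericity by $r\equiv 0$ recovers Proposition \ref{prop:CRc}, since $r=\tfrac{ic}{6}(2|a|^2+9ib)$.
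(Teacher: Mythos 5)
Your proposal is correct and follows essentially the same route as the paper: derive \eqref{eq:str2} from $\d^2\phi=\d^2\phi_1=0$, impose the reality conditions $A_2=-\overline{A_2}$, $C_2=-\overline{B_2}$, $A_4=\overline{A_4}$, $C_4=\overline{B_4}$, and then expand \eqref{eq:cr2} in the basis $\phi\wedge\phi_1$, $\phi\wedge\bar\phi_1$, $\phi_1\wedge\bar\phi_1$ and match coefficients (the paper solves the resulting algebraic system with Mathematica, whereas you verify the stated solution and appeal to uniqueness — an immaterial difference). Your closing observation that $r=\tfrac{ic}{6}(2|a|^2+9ib)$ recovers Proposition \ref{prop:CRc} is exactly the content of Corollary \ref{cor:sph-curv}.
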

\begin{proof}
Taking exterior derivatives of  equations  \eqref{eq:str1} and substituting again  equations \eqref{eq:str1} in the result, we obtain 
equations  \eqref{eq:str2}.
The condition that $\phi_2$ is imaginary and $\phi_4$ is real is equivalent to 
$ A_2 =-\overline A_2, C_2=-\overline B_2,  A_4=\overline A_4, C_4=\overline B_4.$
Using this, substituting   $\phi_2,\phi_3,\phi_4$ into equations  \eqref{eq:cr2} and equating coefficients with respect to $\phi_1 \wedge\bar\phi_1$,\  $\phi\wedge \phi_1,\ \phi\wedge \bar\phi_1$ it is straightforward to obtain a system of algebraic equations whose solution is given  by the stated formulas (we used Mathematica). 
\end{proof}
\begin{cor}\label{cor:sph-curv}
A locally homogeneous CR structure given by an adapted coframe satisfying equation \eqref{eq:str1} is  spherical if and only if 
$
c(2\left| {a}\right | ^2+9ib)=0.
$
\end{cor}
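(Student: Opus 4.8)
The plan is to read the statement off directly from Proposition~\ref{prop:CRc:Appendix}, which already solves the Cartan structure equations \eqref{eq:cr2} for a well-adapted coframe with constant structure functions, combined with the sphericity criterion ``$r\equiv 0$'' supplied by the local version of Cartan's equivalence method.

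First I would arrange the hypotheses of Proposition~\ref{prop:CRc:Appendix}. The coframe in the statement is in fact \emph{well}-adapted, since the first line of \eqref{eq:str1} is precisely the condition $\d\phi=i\phi_1\wedge\bar\phi_1$ of Definition~\ref{def:adap}. For a locally homogeneous CR structure one may moreover choose this coframe invariant under the local transitive symmetry group --- starting from an invariant adapted coframe and applying the normalization recipe recorded after Definition~\ref{def:adap} produces an invariant well-adapted one --- so its structure functions $a,b,c$ are invariant, hence constant; thus \eqref{eq:str1} holds with $a,b,c\in\C$ as required by Proposition~\ref{prop:CRc:Appendix}. (If one simply takes ``$a,b,c$ the constants in \eqref{eq:str1}'' as part of the hypothesis, this step is vacuous.)

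Proposition~\ref{prop:CRc:Appendix} then furnishes the one-forms $\phi_2,\phi_3,\phi_4$ and the functions $r,s$ realizing \eqref{eq:cr2}, with
\[
r=ic\!\left(\frac{|a|^2}{3}+\frac{3ib}{2}\right).
\]
By item~(c) of the local version of Cartan's equivalence method the structure is spherical precisely when $r\equiv 0$ (and then automatically $s\equiv 0$). Since $r$ is constant here, $r\equiv 0$ amounts to $ic\bigl(|a|^2/3+3ib/2\bigr)=0$; multiplying through by the nonzero scalar $6/i$ gives the equivalent condition $c\bigl(2|a|^2+9ib\bigr)=0$, which is the assertion of the corollary.

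I do not anticipate any genuine obstacle: all the computational labour --- solving \eqref{eq:cr2} --- has already been carried out in Proposition~\ref{prop:CRc:Appendix}, and what remains is the elementary rescaling above together with the invocation of the sphericity criterion. The one point worth a sentence is that, although the expression $c(2|a|^2+9ib)$ is written in terms of a particular well-adapted coframe, its vanishing is coframe-independent, because it vanishes exactly when the relative invariant $r$ does (cf.\ the remark on relative invariants following the local equivalence theorem); hence the criterion is a genuine condition on the CR structure rather than an artifact of the chosen coframe.
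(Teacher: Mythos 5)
Your proposal is correct and follows exactly the route the paper intends: the corollary is an immediate consequence of Proposition~\ref{prop:CRc:Appendix} (which gives $r=ic(|a|^2/3+3ib/2)$) together with the sphericity criterion $r\equiv 0$ from the local equivalence theorem, and your rescaling by $6/i$ is the whole remaining computation. The closing remark about coframe-independence via the relative-invariant property of $r$ matches the paper's own discussion following the local version of the theorem.
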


\end{document}